\allowdisplaybreaks \numberwithin{equation}{section}
\newtheorem{theorem}{Theorem}[section]
\newtheorem{lemma}[theorem]{Lemma}
\newtheorem{thm}{Theorem}[section]
\newtheorem{lem}[thm]{Lemma}
\newtheorem{rem}[thm]{Remark}
\newtheorem{prop}[thm]{Proposition}
\theoremstyle{definition}
\newcommand{\R}{\mathbb{R}}
\newcommand{\ds}{\displaystyle}
\begin{document}
\title[Critical elliptic  systems ]{construction of solutions to a nonlinear critical elliptic system via local pohozaev identities}

\author{ Qidong Guo, Qingfang Wang$^{\dagger}$ and Wenju Wu}
\address[Qidong Guo]{School of Mathematics and Statistics, Central China Normal
University, Wuhan, 430079, P. R. China}
\email{qdguo@mails.ccnu.edu.cn}
\address[Qingfang Wang]{School of Mathematics and Computer Science, Wuhan Polytechnic University, Wuhan, 430079, P. R. China}
\email{hbwangqingfang@163.com}
\address[Wenju Wu]{School of Mathematics and Statistics, Central China Normal
University, Wuhan, 430079, P. R. China}
\email{wjwu@mails.ccnu.edu.cn}
\thanks{$^{\dagger}$ Corresponding author: Qingfang Wang}
%\date{\today}
\begin{abstract}
In this paper, we investigate the following elliptic system with Sobolev critical growth
\begin{equation*}
\begin{cases}
-\Delta u+P(|y'|,y'')u=u^{2^*-1}+\frac{\beta}{2} u^{\frac{2^*}{2}-1}v^{\frac{2^*}{2}},\  y\in \R^N,\vspace{0.12cm}\\
-\Delta v+Q(|y'|,y'')v=v^{2^*-1}+\frac{\beta}{2} v^{\frac{2^*}{2}-1}u^{\frac{2^*}{2}}, \  y\in \R^N,\vspace{0.12cm}\\
u,\,v>0, \, u,\,\,v\in H^1(\R^N),
\end{cases}
\end{equation*}
where~$(y',y'')\in \R^2\times\R^{N-2}$, $P(|y'|,y''), Q(|y'|,y'')$ are bounded non-negative function in $\R^+\times\R^{N-2}$, $2^*=\frac{2N}{N-2}$. By combining a finite reduction argument and local Pohozaev type of identities, assuming that $N\geq 5$ and $r^2(P(r,y'')+\kappa ^2Q(r,y''))$ have a common topologically nontrivial critical point,
we construct an unbounded sequence of non-radial positive vector solutions of synchronized type, whose energy can be made arbitrarily large. Our result extends the result of a single critical problem by
[Peng, Wang and Yan,J. Funct. Anal. 274: 2606-2633, 2018]. The novelties mainly include the following two aspects.
On one hand, when $N\geq5$, the coupling exponent $\frac{2}{N-2}<1$, which creates a great trouble for us to apply the perturbation argument directly. This constitutes the main difficulty different between the coupling system and a single equation.
On the other hand, the weaker symmetry conditions of $P(y)$ and $Q(y)$ make us not estimate directly the corresponding derivatives of the reduced
functional in locating the concentration points of the solutions, we employ some local Pohozaev
identities to locate them.

\textbf{Key Words:} critical elliptic systems; local Pohozeav identities;  Lyapunov-Schmidt reduction; Synchronized vector solutions.

\textbf{AMS Subject Classification:} 35B33, 35J10, 35J60, 35B38.
\end{abstract}
\maketitle

\section{Introduction and main results}
\setcounter{equation}{0}
In this paper, we are concerned with the multiplicity of solutions for the following nonlinear critical Schr\"odinger system
\begin{equation}\label{eqs1.1}
\begin{cases}
-\Delta u+P(|y'|,y'')u=u^{2^*-1}+\frac{\beta}{2} u^{\frac{2^*}{2}-1}v^{\frac{2^*}{2}},\  y\in \R^N,\vspace{0.12cm}\\
-\Delta v+Q(|y'|,y'')v=v^{2^*-1}+\frac{\beta}{2} v^{\frac{2^*}{2}-1}u^{\frac{2^*}{2}}, \  y\in \R^N,\vspace{0.12cm}\\
u,\,v>0, \, u,\,\,v\in H^1(\R^N),
\end{cases}
\end{equation}
where $P(|y'|,y''), Q(|y'|,y'')$ are bounded non-negative function in $\R^+\times\R^{N-2}$, $2^*=\frac{2N}{N-2}$, $\beta$ is a coupling constant.
For the understanding of \eqref{eqs1.1}, let us begin with the single equation  in $\R^N$. It is related to the following well-known Brezis-Nirenberg problem in $S^N$
\begin{align}\label{eqs1.2}
-\Delta_{S^N}u=u^{\frac{N+2}{N-2}}+\lambda u,u>0\,\,\,\,\hbox{on}\, S^N.
\end{align}
Indeed, after using the stereographic projection, problem \eqref{eqs1.2} can be reduced to
\begin{align*}
-\Delta u+V(y)u=u^{\frac{N+2}{N-2}},\,\,u>0,\,\,u\in H^1(\R^N)
\end{align*}
with $V(y)=\frac{-4\lambda-N(N-2)}{(1+|y|^2)^2}$ and $V(y)>0$ if $\lambda<-\frac{N(N-2)}{4}$.
In the pioneer work, Chen, Wei and Yan \cite{Chen-Wei-Yan} applied the reduction argument to consider the Schr\"odinger equation with the critical exponent
\begin{align*}
-\Delta u+V(|x|)u=u^{\frac{N+2}{N-2}},\,\,\,\hbox{in}\,\,\,\R^N.
\end{align*}
If $V$ is radially symmetric, $r^2V(r)$ has a local maximum point or a local minimum point $r_0>0$ with $V(r_0)>0$, they proved the existence of infinitely many positive solutions.
Do the same thing for the following equation
\begin{align*}
-\Delta u+V(|y'|,y'')u=u^{\frac{N+2}{N-2}},\,\,\,\,\,\,u\in H^1(\R^N),
\end{align*}
where $(y',y'')\in \R^2\times\R^{N-2}$, $V(|y'|,y'')>0$ is bounded. By using the Pohozaev identities, Peng, Wang and Yan \cite{Peng-Wang-Yan} overcome the difficulty appearing in using the standard reduction method to find algebraic equations which determine the location of bubbles. Moreover, He, Wang and Wang combining the reduction method and local Pohozaev identities proved non-degeneracy of multi-bubbling solutions of \cite{He-Wang-Wang}. On the other hand, F. Du etc. \cite{Du-Hua-Wang-Wang} consider the multi-piece of bubble solutions for a nonlinear critical elliptic equation.

For the system, Peng and Wang discuss the following system in \cite{Peng-Wang}
\begin{equation}\label{eqs1.3}
\begin{cases}
-\Delta u+P(|x|)u=\mu u^3+\beta uv^2,\,\,\,\hbox{in}\,\,\,\R^3,\vspace{0.12cm}\cr
-\Delta v+Q(|x|)v=\nu v^3+\beta u^2v,\,\,\,\hbox{in}\,\,\,\R^3,
\end{cases}
\end{equation}
where $P(r),\,Q(r)$ are positive radial potential, $\mu>0,\,\nu>0$. They examined the effect of nonlinear coupling on the solution structure. The authors use a finite dimensional reduction argument and found the location of bubbles by using a maximization procedure. They constructed an unbounded sequence of non-radial positive vector solutions of segregated type and an unbounded sequence of non-radial positive vector solution of synchronized type. The work of \cite{Lin-Wei} gives solutions with one component peaking at the origin and the other having a finite number of peaks on a k-polygon. On the other hand, for the $\beta>0$, $P=Q=1$, it is known (for example \cite{Bart-Wang,Bart-Dan-Wang}) that there are special positive solutions with the two components being positive constant multiples of the unique positive solution of the scalar cubic Schr\"odinger equation $-\Delta w+w=w^3,\,\,x\in \R^3$. For the elliptic system of subcritical exponent, readers can refer to \cite{Li-Wei-Wu,Liu-Wang-Wang,Pis-Vai,Wang-Ye, Wei-Wu}.

Systems of nonlinear Schr\"odinger equations with critical exponent have been the subject of extensive mathematical studies in recent years, for example,
\cite{a-c,d-w-w,l-w,l-w2,liu-w,sirakov,t-v,t-w,w-y}.
In \cite{g-l-w}, Guo, Li and Wei proved the existence of infinitely many positive non-radial solutions of the coupled elliptic system.
Also, Chen, Medina and Pistoia \cite{cmp-2022} consider the segregated solutions for a critical elliptic system with a small interspecies repulsive force in $\R^4$.
More recently, Del Pino, Musso Pacard and Pistoia \cite{dmpp1,dmpp2} constructed sequences of sign-changing solutions with large energy and concentrating along some special sub-manifold of $\R^N$.
%We also refer to
%\cite{b-d-w,c-z,c-z2,g-l-w,p-w,p-p-w} for more references therein about systems with both critical and subcritical exponents.
However, so far, the existing results of using the Lyapunov-Schmidt reduction argument to construct  concentrated solutions of an elliptic system have been basically about low dimensional cases with $N<5$, such as \cite{cpv-2022,cmp-2022,g-l-w,Peng-Wang,Peng-Wang-Wang,wxzz-2015-amss}, etc.
Especially in \cite{Peng-Peng-Wang}, Peng, Peng and Wang consider the Dirichlet problem with Sobolev critical exponent
\begin{equation}\label{eqs1.3}
\begin{cases}
-\Delta u=|u|^{2^*-2}u+\frac{\alpha}{2^*}|u|^{\alpha-2}u|v|^{\beta},\,\,\,\,\hbox{in}\,\,\,\Omega,\vspace{0.12cm}\cr
-\Delta v=|v|^{2^*-2}v+\frac{\beta}{2^*}|u|^\alpha |v|^{\beta-2}v,\,\,\,\,\hbox{in}\,\,\,\Omega,
\end{cases}
\end{equation}
where $\alpha+\beta=2^*=\frac{2N}{N-2}$ and $\Omega=\R^N$ or $\Omega$ is a smooth bounded domain in $\R^N$. They obtain a uniqueness result on the least energy solutions and show that a manifold of the synchronized type of positive solutions is non-degenerate for the above system for some ranges of the parameters $\alpha,\beta,N$.

Motivated by \cite{Guo-Wang-Wang,Peng-Wang-Yan}, in this paper, we mainly want to study the existence of infinitely many solutions of system \eqref{eqs1.1} with weaker symmetry of potentials. We would like to point out that very recently in \cite{Guo-Wang-Wang} using the finite dimensional reduction method obtained infinitely many non-radial solutions for system \eqref{eqs1.1} with radial potentials. Also, about the single critical Schrodinger equation $-\Delta u+V(y)u=u^{\frac{N+2}{N-2}},y\in\R^N$ involving with a potential linear term, there are some existence of solutions for it, one can refer \cite{Chen-Wei-Yan, Du-Hua-Wang-Wang, Wang-Wang-Yang}, in particular, in \cite{Peng-Wang-Yan} Peng, Wang and Yan first applied the finite dimensional reduction combining the local Pohozaev identities obtained infinitely many solutions of \eqref{eqs1.1}.

In this paper, we assume that $P(y),\,Q(y)$ satisfy the following assumptions:
\begin{itemize}
\item[$(i)$] The function $r^2(P(r,y'')+\kappa^2Q(r,y''))$ have a common critical point $(r_0,y_0'')$ such that $r_0>0$ and $P(r_0,y_0'')>0$ and $Q(r_0,y_0'')>0$;\\
\item[$(ii)$] $deg\big(\nabla(r^2P(r,y'')+\kappa^2r^2Q(r,y'')),(r_0,y_0'')\big)\neq 0$;
\item[$(iii)$] $\beta$ satisfies $1+\beta\kappa^{\frac{2^*}{2}}>0,$
\end{itemize}
where $\kappa$ satisfies
\begin{equation}\label{kappa}
2+\kappa^{\frac{2^*}{2}}-\beta\kappa^{\frac{2^*}{2}-2}-2\kappa^{2^*-2}=0.
\end{equation}

Our main result in this paper can be stated as follows.
\begin{thm}\label{th1}
Suppose $N\geq5$ and $P\geq0,\,Q\geq0$ are bounded and belong to $C^1$. If $P(r,y''),\,Q(r,y'')$ satisfies $(i)$ and $(ii)$,  then problem \eqref{eqs1.1} has infinitely many non-radial positive solutions whose energy can be made arbitrarily large.
\end{thm}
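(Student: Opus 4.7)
The plan is to extend the Lyapunov-Schmidt reduction combined with local Pohozaev identities from the scalar case \cite{Peng-Wang-Yan} to the coupled setting, producing a family of solutions parametrised by an integer $k\to\infty$. The building blocks are the standard Aubin-Talenti bubbles
\begin{equation*}
U_{\mu,y^{j}}(y)=\bigl[N(N-2)\bigr]^{\frac{N-2}{4}}\Bigl(\frac{\mu}{1+\mu^{2}|y-y^{j}|^{2}}\Bigr)^{\frac{N-2}{2}},
\end{equation*}
placed at the vertices of a regular $k$-polygon on a circle of radius $r$ in the $y'$-plane,
\begin{equation*}
y^{j}=\Bigl(r\cos\tfrac{2\pi(j-1)}{k},\,r\sin\tfrac{2\pi(j-1)}{k},\,\bar{y}''\Bigr),\qquad j=1,\dots,k,
\end{equation*}
with scaling parameter $\mu$ of order a suitable positive power of $k$ and centre parameters $(r,\bar{y}'')$ in a small neighbourhood of $(r_{0},y_{0}'')$. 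Assumption (iii) together with the algebraic equation \eqref{kappa} singles out the positive constant $\kappa$ for which the synchronised pair $(W,\kappa W)$ with $W=W_{r,\bar{y}'',\mu}=\sum_{j=1}^{k}U_{\mu,y^{j}}$ is an approximate solution of \eqref{eqs1.1} inside the space $H_{s}$ of functions enjoying the cyclic $\mathbb{Z}_{k}$-symmetry in $y'$ and the full orthogonal symmetry in $y''$.

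With this ansatz I would write the true solution as $(u,v)=(W+\phi_{1},\kappa W+\phi_{2})$ with $(\phi_{1},\phi_{2})\in H_{s}\times H_{s}$ lying in the $L^{2}$-orthogonal complement of the natural kernel spanned by $\partial_{\mu}W$, $\partial_{r}W$ and $\nabla_{\bar{y}''}W$ in each slot. Projecting the system onto this orthogonal complement, the linearised operator at $(W,\kappa W)$ splits along the synchronised direction $(1,\kappa)$ and the transversal direction $(\kappa,-1)$, each diagonal block being a perturbation of the scalar bubble linearisation whose invertibility follows from the non-degeneracy of $U$. A contraction mapping argument then produces $(\phi_{1},\phi_{2})$ as a $C^{1}$ function of the finite-dimensional parameters $(r,\bar{y}'',\mu)$. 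The main technical obstacle, and the chief novelty relative to \cite{Peng-Wang,Peng-Peng-Wang,Guo-Wang-Wang}, is that when $N\geq 5$ the coupling exponent $\tfrac{2}{N-2}<1$ makes the mixed nonlinearity $u^{2^{*}/2-1}v^{2^{*}/2}$ merely H\"older continuous near zero, so it cannot be linearised in the standard Taylor fashion. I would handle this by splitting the nonlinearity into a formal linear expansion valid in the regime $|\phi_{i}|\ll W$ plus a genuinely H\"older remainder controlled in weighted $L^{\infty}$-norms of the type $\|(1+\mu|y-y^{j}|)^{\tau}\phi\|_{\infty}$ in the spirit of \cite{Chen-Wei-Yan,Peng-Wang-Yan}, so that the lack of differentiability only produces higher-order errors.

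After the reduction, the problem becomes that of finding critical points of a finite-dimensional reduced functional $F_{k}(r,\bar{y}'',\mu)$ obtained by substituting the perturbed ansatz into the energy. Because the symmetry hypothesis on $P$ and $Q$ in the $\bar{y}''$ variable is weak, the standard strategy of expanding $\nabla F_{k}$ asymptotically does not yield sharp enough information to locate the concentration points. To circumvent this, I would derive local Pohozaev identities: multiplying each equation of \eqref{eqs1.1} by $\partial_{y_{i}}u$ (resp.\ $\partial_{y_{i}}v$), integrating on a small ball around a bubble centre, and exploiting both the $\mathbb{Z}_{k}$-symmetry and the synchronised structure, the two identities combine into a system of algebraic equations of the form
\begin{equation*}
\partial_{r}\bigl[r^{2}\bigl(P(r,\bar{y}'')+\kappa^{2}Q(r,\bar{y}'')\bigr)\bigr]=o(1),\qquad \nabla_{\bar{y}''}\bigl[r^{2}\bigl(P(r,\bar{y}'')+\kappa^{2}Q(r,\bar{y}'')\bigr)\bigr]=o(1),
\end{equation*}
together with a balance condition on $\mu$ that fixes the scaling in terms of $k$. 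Assumption (ii) that the local Brouwer degree of $\nabla(r^{2}P+\kappa^{2}r^{2}Q)$ at $(r_{0},y_{0}'')$ is non-zero, combined with homotopy invariance, then supplies a solution $(r_{k},\bar{y}_{k}'',\mu_{k})$ of the perturbed algebraic system for every sufficiently large $k$. Letting $k\to\infty$ yields the required unbounded sequence of non-radial positive solutions, whose energies grow like $k$ times the Aubin-Talenti level $S^{N/2}$. The heart of the argument is the interplay between the weighted estimates needed to absorb the sub-linear coupling when $N\geq 5$ and the cancellation structure of the Pohozaev identities that makes the location of the bubbles detectable through the critical points of $r^{2}(P+\kappa^{2}Q)$ alone.
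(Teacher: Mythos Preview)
Your proposal is correct and follows essentially the same route as the paper: synchronized multi-bubble ansatz at the vertices of a regular $k$-polygon, reduction in weighted $L^{\infty}$ norms with the sub-linear coupling handled by enforcing $|\phi_{i}|\le \tfrac12 W$ so that a Taylor-type expansion of $(1+t)^{2/(N-2)}$ is legitimate, and then local Pohozaev identities together with a Brouwer-degree argument to solve the reduced $(r,\bar y'',\mu)$-system. Two implementation points that the paper builds in and that your sketch omits: the approximate solution is multiplied by a smooth cutoff $\xi$ supported in $\{|(r,y'')-(r_{0},y_{0}'')|\le 2\delta\}$, so that on the Pohozaev domain $D_{\rho}$ (which contains \emph{all} bubbles, not a single one) the boundary contributions involve only $(\phi_{1},\phi_{2})$; and the equation $\partial_{r}\!\bigl(r^{2}(P+\kappa^{2}Q)\bigr)=o(1)$ is obtained from the \emph{dilation} identity (testing with $\langle y,\nabla u\rangle$, $\langle y,\nabla v\rangle$) combined with the equation itself, not from the translation identities alone, which only yield $\partial_{y_{i}}(P+\kappa^{2}Q)=o(1)$.
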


\medskip
\begin{rem}
It is worth noting that in the higher dimensional space with $N\geq5$, the exponent $\frac2{N-2}<1$ from the worse couple terms will create a great trouble for us to apply the perturbation argument  directly.
To overcome this difficulty, we should ensure that the error term can be controlled by a small multiple of the approximate solution (see Lemma \ref{lmA.6}).
\end{rem}

\begin{rem}
We would like to point out that the weaker symmetry conditions of $P(y)$ and $Q(y)$ make us not estimate directly the corresponding derivatives of the reduced
functional in locating the concentration points of the solutions, we employ some local Pohozaev
identities to locate them.
Moreover,  compared with \cite{Guo-Wang-Wang}, the system we consider here involving a linear term which causes the problem more difficult.
\end{rem}

\begin{rem}
The non-radial vector solutions of separated type for system \eqref{eqs1.1}
is also very important but more challenging and we will consider this topic in a future work.
\end{rem}

\medskip
It is well known that the only solutions of the problem
\begin{align*}
-\Delta w=w^{\frac{N+2}{N-2}},\,\,\,w>0\,\,\,\hbox{in}\,\,\, \R^N
\end{align*}
are of the form
\begin{align*}
w_{x,\lambda}(y):=\lambda^{\frac{N-2}2} w(\lambda(y-x))=(N(N-2))^{\frac{N-2}4}\Bigl(\frac\lambda{1+\lambda^2|y-x|^2}\Bigr)^{\frac{N-2}2},\ \ \lambda>0,\ x\in\R^N.
\end{align*}
Moreover, by \cite{Peng-Peng-Wang}, then $N+1$ dimensional manifold
\begin{align*}
\{(U_{x_0,\lambda},V_{x_0,\lambda})=(s w_{x_0,\lambda}, tw_{x_0,\lambda}), t=\kappa s, x_0\in\R^N, \lambda>0\}
\end{align*}
is a non-degenerate solution of the following equations
\begin{equation}\label{weqs1.2}
\begin{cases}
-\Delta u=u^{2^*-1}+\frac{\beta}{2} u^{\frac{2^*}{2}-1}v^{\frac{2^*}{2}},\  y\in \R^N,\vspace{0.12cm}\\
-\Delta v=v^{2^*-1}+\frac{\beta}{2} v^{\frac{2^*}{2}-1}u^{\frac{2^*}{2}}, \  y\in \R^N,\vspace{0.12cm}\\
u,\,v>0, \, u,\,\,v\in D^{1,2}(\R^N),
\end{cases}
\end{equation}
where $s$ satisfies that $s^{2^*-2}=\frac{2}{2+\beta\kappa^{\frac{2^*}{2}}}$ and $\kappa$ satisfies \eqref{kappa}.

Let $y=(y',y''), y'\in\R^2, y''\in\R^{N-2}$. We define
\begin{align*}
H_s=\Bigl\{u:u\in D^{1,2}(\R^N), &u\ \hbox{is\ even\ in} \ y_h,h=2,\cdots,N,\\
& u(r\cos\theta,rsin\theta,y'')=u(r\cos(\theta+\frac{2\pi j}k),r\sin(\theta+\frac{2\pi j}k),y'')
\Bigr\}.
\end{align*}
For any large integer $k>0$, let
$$x_j=\Bigl(r\cos\frac{2(j-1)\pi}{k},r\sin\frac{2(j-1)\pi}{k},\mathbf{0}\Bigr),\ \ j=1,\ldots,k,$$
where $\mathbf{0}$ is the zero vector in $\R^{N-2}$.

Let $\delta>0$ is a small constant, such that
\begin{align}\label{weqs1.3}
r^2P(r,y'')>0,\,\,\,r^2Q(r,y'')>0,\,\,\,\hbox{if}\,\,\,|(r,y'')-(r_0,y_0'')|\leq 10\delta.
\end{align}

To deal with the slow decay of this
function when $N$ is not big, we need to cut off this function. Let $\xi(y)=\xi(|y'|,y'')$ be a smooth function satisfying $\xi=1$ if $|(r,y'')-(r_0,y_0'')|\leq\delta$; $\xi=0$ if $|(r,y'')-(r_0,y_0'')|\geq 2\delta$ and $0\leq\xi\leq1$.
Denote
\begin{align*}
W_{1,x_j}=\xi U_{x_j,\lambda},\,\,\,W_{1,\lambda}^*=\sum\limits_{j=1}^kU_{x_j,\lambda},\,\,\,W_{1,\lambda}=\sum\limits_{j=1}^kW_{1,x_j},
\end{align*}
\begin{align*}
W_{2,x_j}=\xi V_{x_j,\lambda},\,\,\,W_{2,\lambda}^*=\sum\limits_{j=1}^kV_{x_j,\lambda},\,\,\,W_{2,\lambda}=\sum\limits_{j=1}^kW_{2,x_j}.
\end{align*}

In this paper, we always assume that $k>0$ is a large integer, $\lambda\in \Bigl[L_0k^{\frac{N-2}{N-4}},L_1k^{\frac{N-2}{N-4}}\Bigr]$
for some constant $L_1>L_0>0$.
We will prove Theorem \ref{th1} by verifying the following result.
\begin{thm}\label{thm-add}
Under the assumptions of Theorem \ref{th1}, there exists a positive integer $k_0>0$ such that for any integer $k\geq k_0$, \eqref{eqs1.1} has a solution $(u_k,v_k)$ of the form
\begin{align*}
u_k=W_{1,\lambda_k}+\varphi_k=\sum\limits_{j=1}^k\xi U_{x_j,\lambda_k}+\varphi_k,\,\,\,
v_k=W_{2,\lambda_k}+\psi_k=\sum\limits_{j=1}^k\xi V_{x_j,\lambda_k}+\psi_k,
\end{align*}
where $(\varphi_k,\psi_k)\in H_s\times H_s$. Moreover, as $k\rightarrow+\infty$, $\lambda_k\in \Bigl[L_0k^{\frac{N-2}{N-4}},L_1k^{\frac{N-2}{N-4}}\Bigr]$, $(\bar{r}_k,\bar{y}''_k)\rightarrow(r_0,y_0'')$ and $\lambda_k^{-\frac{N-2}{2}}\|(\varphi_k,\psi_k)\|_{L^\infty}\to 0$.
\end{thm}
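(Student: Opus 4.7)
The plan is to prove Theorem \ref{thm-add} via a finite-dimensional Lyapunov--Schmidt reduction carried out on the symmetric space $H_s\times H_s$, followed by a system of local Pohozaev identities to fix the concentration parameters. The parameters to be determined are $(\lambda,r,y'')$: $(r,y'')$ control the centers $x_j$ of the synchronized bubbles $(W_{1,\lambda},W_{2,\lambda})$ placed on a $k$-polygon, while $\lambda$ controls their common concentration rate. Because every candidate is already $\mathbb{Z}_k$-symmetric in $y'$ and even in each coordinate, the only kernel directions that must be modded out come from translations and dilation at a single representative bubble, hence the reduced problem stays finite-dimensional of bounded size independent of $k$.

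First I would fix $(\lambda,r,y'')$ in the admissible region, take the approximate solution $(W_{1,\lambda},W_{2,\lambda})$, and solve the projected equation for the remainder $(\varphi,\psi)\in H_s\times H_s$ by a contraction argument. The linear theory relies on invertibility of the linearized operator on the orthogonal complement of the kernel; non-degeneracy of the limit bubble established in \cite{Peng-Peng-Wang}, together with the fact that the bubbles separate at rate $r/k$ while $\lambda$ sits at scale $k^{(N-2)/(N-4)}$, makes this standard except for one serious issue. The issue, flagged in the first remark, is that when $N\geq5$ the coupling exponent $\frac{2^*}{2}-1=\frac{2}{N-2}<1$, so the nonlinearity $u^{2^*/2-1}v^{2^*/2}$ is only H\"older, not Lipschitz, in $(u,v)$; the contraction estimate cannot tolerate an error term that is merely small in norm and must instead be dominated pointwise by the approximate solution. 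The remedy, invoking Lemma \ref{lmA.6}, is to work in a weighted $L^\infty$ framework with weight $\sum_j\lambda^{-(N-2)/2}(1+\lambda|y-x_j|)^{-\tau}$ for a suitable $\tau$, so that $|\varphi|,|\psi|$ stay controlled by $W_{1,\lambda},W_{2,\lambda}$ and the sublinear coupling composition remains a contraction. This produces $(\varphi_{\lambda,r,y''},\psi_{\lambda,r,y''})$ depending smoothly on the parameters with $\lambda^{-(N-2)/2}\|(\varphi,\psi)\|_{L^\infty}=o(1)$.

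Next I would choose $(\lambda,r,y'')$ so that the kernel projection vanishes, which is the true Euler--Lagrange system. Because $P,Q$ enjoy only a partial symmetry, differentiating the reduced energy with respect to $(r,y'')$ would demand derivatives of $(\varphi,\psi)$ that we cannot afford to estimate. Following the strategy of \cite{Peng-Wang-Yan,Guo-Wang-Wang}, I would instead impose local Pohozaev identities obtained by multiplying the equations for $u_k=W_{1,\lambda}+\varphi$ and $v_k=W_{2,\lambda}+\psi$ by $\partial_{y_i}u_k$, $\partial_{y_i}v_k$, and by the dilation vector fields $(y-x_1)\cdot\nabla u_k$, $(y-x_1)\cdot\nabla v_k$, then integrating over a fixed small ball $B_\delta(x_1)$. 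The boundary terms are controlled by the exponential decay in the weighted norm away from $x_1$, while the interior terms expand to leading order as (a positive multiple of) $\partial_r(r^2(P+\kappa^2Q))$ in the $r$-direction, $\nabla_{y''}(r^2(P+\kappa^2 Q))$ in the $y''$-directions, and, in the dilation identity, the balance between a $1/\lambda^2$ term from the potential and a $k^{N-2}/\lambda^{N-2}$ bubble--bubble interaction term. This yields $N$ algebraic equations in the $N$ unknowns $(\lambda,r,y'')$.

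The leading-order system has the form $\nabla\big(r^2(P+\kappa^2Q)\big)(r,y'')=o(1)$ coupled with a scalar $\lambda$-equation whose leading-order part is strictly monotone and whose balance dictates $\lambda\sim k^{(N-2)/(N-4)}$. The non-trivial degree assumption (ii) lets me apply a Brouwer degree continuation to the first $N-1$ equations, producing $(\bar r_k,\bar y_k'')\to(r_0,y_0'')$; the scalar $\lambda$-equation is then solved by the intermediate value theorem on a suitable sub-interval, giving $\lambda_k\in[L_0 k^{(N-2)/(N-4)},L_1 k^{(N-2)/(N-4)}]$. The main obstacle throughout is the $N\geq5$ sublinear-coupling phenomenon described above: it forces every reduction estimate to be upgraded to a pointwise/weighted version, and it must be tracked again in the Pohozaev expansions so that the cross terms from the $u^{2^*/2-1}v^{2^*/2}$ coupling do not overwhelm the delicate balance between the potential and interaction leading terms.
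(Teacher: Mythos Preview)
Your overall strategy matches the paper's: weighted $L^\infty$ reduction on $H_s\times H_s$ (Section~\ref{s2}), pointwise control $|\varphi|\le\tfrac12 W_{1,\lambda}$, $|\psi|\le\tfrac12 W_{2,\lambda}$ via Lemma~\ref{lmA.6} to tame the sublinear coupling, then local Pohozaev identities plus a degree argument (Section~\ref{s3}) to fix $(\bar r,\bar y'',\lambda)$.

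There is, however, a concrete gap in how you set up the Pohozaev step. Integrating over a ball $B_\delta(x_1)$ and appealing to ``exponential decay away from $x_1$'' does not work: the bubbles decay only polynomially, and for large $k$ \emph{all} centers $x_1,\dots,x_k$ lie at mutual distance $O(1/k)$ inside any fixed ball, so $W_{1,\lambda},W_{2,\lambda}$ are not small on $\partial B_\delta(x_1)$ and the boundary terms are of leading order. The paper instead integrates over the annular region $D_\rho=\{(r,y''):|(r,y'')-(r_0,y_0'')|\le\rho\}$ with $\rho\in(2\delta,5\delta)$, chosen precisely so that the cutoff $\xi$---and hence $W_{1,\lambda},W_{2,\lambda}$---vanishes on $\partial D_\rho$; then $(u_k,v_k)=(\varphi,\psi)$ on the boundary and the surface terms are $O\bigl(\int_{\partial D_\rho}|\nabla\varphi|^2+\varphi^2+|\varphi|^{2^*}+\cdots\bigr)=O(k\lambda^{-2-\epsilon})$ by Lemmas~\ref{lm3.3}--\ref{lm3.5}. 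A second adjustment: the scaling identity is centered at the origin, using $\langle y,\nabla u_k\rangle$ rather than $(y-x_1)\cdot\nabla u_k$, because combined with the $y''$-translation identities this isolates $\tfrac1{2r}\partial_r\bigl(r^2(P+\kappa^2 Q)\bigr)$ directly; the $\lambda$-balance is obtained not from a Pohozaev identity but by testing against $\partial_\lambda W_{i,\lambda}$ over all of $\R^N$ (Lemma~3.2). With these two corrections your outline becomes exactly the paper's proof.
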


Before we close this section, let us explain the main ideas in the proof of Theorem \ref{thm-add}. We will use a finite reduction argument in the proof.
To deal with the large number of bubbles in the
solution, similar as \cite{Wei-Yan-10-CV,Wei-Yan},  the reduction procedure is carried out in a weighted space instead of the standard Sobolev space.
Since the critical points including the saddle points of the potentials, which makes us not estimate directly the corresponding derivatives of the reduced
functional in locating the concentration points of the solutions, motivated by \cite{Peng-Wang-Yan} we employ some local Pohozaev
identities to locate them.
%We also point that this condition $N\leq8$ is only used to estimate the nonlinearities involving the couple terms (see Lemma \ref{lemN}).

This paper is organized as follows. In section \ref{s2}, we carry out the  reduction procedure with weighted maximum norm to a finite dimensional setting.
 In section \ref{s3}, we solve the corresponding finite dimensional probem and prove the main result Theorem \ref{th1}. Some delicate estimates and tools are put in the appendix.

\section{Finite-dimensional reduction}\label{s2}
In this section, we perform a finite-dimensional reduction. For simplicity, we write $2^*=\frac{2N}{N-2}$.
Set
\begin{align*}
\|u\|_*= \sup_{y\in\R^N}\Big(\sum_{j=1}^k\frac{1}{(1+\lambda|y-x_j|)^{\frac{N-2}{2}+\tau}}\Big)^{-1}\lambda^{-\frac{N-2}{2}}|u(y)|
\end{align*}
and
\begin{align*}
\|f\|_{**}= \sup_{y\in\R^N}\Big(\sum_{j=1}^k\frac{1}{(1+\lambda|y-x_j|)^{\frac{N+2}{2}+\tau}}\Big)^{-1}\lambda^{-\frac{N+2}{2}}|f(y)|,
\end{align*}
where $\tau=\frac{N-4}{N-2}$.

We also denote that $\|(u,v)\|_*=\|u\|_*+\|v\|_*$
and
$\|(f,g)\|_{**}=\|f\|_{**}+\|g\|_{**}.$

Let
\begin{align*}
Y_{j,1}=\frac{\partial W_{1,x_j}}{\partial r},\,\,\,Y_{j,2}=\frac{\partial W_{1,x_j}}{\partial \lambda},\,\,\,Y_{j,k}=\frac{\partial W_{1,x_j}}{\partial \bar{y}''_k},\,\,\,k=3,\cdots,N,\cr
Z_{j,1}=\frac{\partial W_{2,x_j}}{\partial r},\,\,\,Z_{j,2}=\frac{\partial W_{2,x_j}}{\partial \lambda},\,\,\,Z_{j,k}=\frac{\partial W_{2,x_j}}{\partial \bar{y}''_k},\,\,\,k=3,\cdots,N.
\end{align*}
First we consider the linear problem
\begin{align}\label{eqlinear}
\begin{cases}
L_k(\varphi,\psi)=(h,g)+\sum\limits_{i=1}^Nc_i\sum\limits_{j=1}^k(W_{1,x_j}^{2^*-2}Y_{j,i},W_{2,x_j}^{2^*-2}Z_{j,i}),\\
(\varphi,\psi)\in H_s\times H_s,\vspace{0.12cm}\\
\Bigl\langle (W_{1,x_j}^{2^*-2}Y_{j,l},W_{2,x_j}^{2^*-2}Z_{j,l}),(\varphi,\psi)\Bigr\rangle=0,\ \ j=1,\cdots,k,\ l=1,2,\cdots,N
\end{cases}
\end{align}
for some number $c_i$, where $\langle(u_1,u_2),(v_1,v_2)\rangle= \ds\int_{\R^N}(u_1v_1+u_2v_2)$,
\begin{small}
\begin{align}\label{L}
&L_k(\varphi,\psi)=\\
&\left(\begin{matrix}
-\Delta\varphi+P(r,y'')\varphi-(2^*-1)W_{1,\lambda}^{\frac{4}{N-2}}\varphi
-\frac{\beta}{(N-2)} W_{1,\lambda}^{\frac{4-N}{N-2}}W_{2,\lambda}^{\frac{N}{N-2}}\varphi-\frac {N\beta}{2(N-2)} W_{1,\lambda}^{\frac{2}{N-2}}W_{2,\lambda}^{\frac{2}{N-2}}\psi\nonumber \\
-\Delta\psi+Q(r,y'')\psi-(2^*-1)W_{2,\lambda}^{\frac{4}{N-2}}\psi
-\frac{\beta}{(N-2)} W_{1,\lambda}^{\frac{4-N}{N-2}}W_{2,\lambda}^{\frac{N}{N-2}}\psi-\frac {N\beta}{2(N-2)} W_{1,\lambda}^{\frac{2}{N-2}}W_{2,\lambda}^{\frac{2}{N-2}}\varphi
\end{matrix}\right)^\top\nonumber \\=
&
(\varphi,\psi)\mathcal L_k^\top,
\end{align}
\end{small}
with
\begin{small}
\begin{align*}&\mathcal L_k=\\
&\left(\begin{matrix}
-\Delta+P(r,y'')-(2^*-1)W_{1,\lambda}^{\frac{4}{N-2}}
-\frac\beta{N-2} W_{1,\lambda}^{\frac{4-N}{N-2}}W_{2,\lambda}^{\frac{N}{N-2}} &-\frac{N\beta}{2(N-2)} W_{1,\lambda}^{\frac{2}{N-2}}W_{2,\lambda}^{\frac{2}{N-2}}\nonumber \\
-\frac{N\beta}{2(N-2)} W_{1,\lambda}^{\frac{2}{N-2}}W_{2,\lambda}^{\frac{2}{N-2}} &
-\Delta+Q(r,y'')-(2^*-1)W_{2,\lambda}^{\frac{4}{N-2}}
-\frac{\beta}{N-2} W_{2,\lambda}^{\frac{4-N}{N-2}}W_{1,\lambda}^{\frac{N}{N-2}}
\end{matrix}\right).
\end{align*}
\end{small}

\begin{lem}\label{lemlinear}
Assume that $(\varphi,\psi)$ solves \eqref{eqlinear}. If $\|(h_1,h_2)\|_{**}$ goes to zero as $k$ goes to infinity, so does
$\|(\varphi,\psi)\|_{*}$.
\end{lem}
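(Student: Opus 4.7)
My plan is a contradiction argument tailored to the weighted norms $\|\cdot\|_*$ and $\|\cdot\|_{**}$ and based on a Green's-function representation. Suppose the conclusion fails: there exist $k\to\infty$, data $(h_{1,k},h_{2,k})$ with $\|(h_{1,k},h_{2,k})\|_{**}\to 0$, parameters $\lambda_k \in [L_0 k^{(N-2)/(N-4)}, L_1 k^{(N-2)/(N-4)}]$, concentration parameters $(\bar r_k,\bar y''_k)$ near $(r_0,y_0'')$, solutions $(\varphi_k,\psi_k)\in H_s\times H_s$ of \eqref{eqlinear}, and scalars $c_{i,k}$ with $\|(\varphi_k,\psi_k)\|_*\geq \delta_0>0$. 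After normalizing I may assume $\|(\varphi_k,\psi_k)\|_*=1$, and the task is to derive a contradiction by showing $\|(\varphi_k,\psi_k)\|_*\to 0$.

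The first step is to control the multipliers $c_{i,k}$. Testing \eqref{eqlinear} against $(Y_{j,l},Z_{j,l})$ for each $(j,l)$, integrating by parts to transfer $-\Delta$ onto $(Y_{j,l},Z_{j,l})$ (which satisfies the bubble-linearized equation up to lower-order $P,Q$-perturbations), and using the orthogonality conditions in the third line of \eqref{eqlinear}, yields a linear system in the $c_{i,k}$. Its matrix is approximately block-diagonal with uniformly positive eigenvalues, since the diagonal blocks are standard inner products of bubble-derivatives and the off-diagonal blocks ($i\neq j$) are negligible because the bubbles $x_j$ are well separated. The right-hand side is bounded by $C\|(h_{1,k},h_{2,k})\|_{**}$ plus an error of order $o(\|(\varphi_k,\psi_k)\|_*)$ from the $P,Q$ terms, so each $c_{i,k}$ is $o(1)$ after the appropriate power of $\lambda_k$ is factored out.

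Next, I would pass to the pointwise integral representation. Observe that by construction $W_{2,\lambda_k}=\kappa W_{1,\lambda_k}$, so the apparently singular factor $W_{1,\lambda_k}^{(4-N)/(N-2)}W_{2,\lambda_k}^{N/(N-2)}$ collapses to $\kappa^{N/(N-2)}W_{1,\lambda_k}^{4/(N-2)}$ wherever $W_{1,\lambda_k}>0$, and the other coupling factor $W_{1,\lambda_k}^{2/(N-2)}W_{2,\lambda_k}^{2/(N-2)}$ becomes $\kappa^{2/(N-2)}W_{1,\lambda_k}^{4/(N-2)}$. Hence $L_k$ reduces to the scalar critical linearization with matrix coefficients, plus bounded perturbations by $P$ and $Q$. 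Writing $\varphi_k = (-\Delta)^{-1}\bigl[\text{(bubble-linear terms)} - P\varphi_k + h_{1,k} + (c\text{-terms})\bigr]$ and analogously for $\psi_k$, I apply the weighted convolution estimate (to be recorded in the appendix, in the spirit of \cite{Wei-Yan,Peng-Wang-Yan}): for functions bounded by $\lambda^{(N+2)/2}\sum_j(1+\lambda|z-x_j|)^{-(N+2)/2-\tau}$, the Newtonian convolution is controlled by $\lambda^{(N-2)/2}\sum_j(1+\lambda|y-x_j|)^{-(N-2)/2-\tau}$ times a small constant. Smallness for the bubble-linear contributions comes from the strong localization of $W_{1,\lambda_k}^{4/(N-2)}$ (which trades the $*$-weight for the $**$-weight with a factor $\lambda_k^{-\sigma}$ or $k^{-\sigma}$); the $h$-contribution is $o(1)$ by assumption; the $c$-contribution is $o(1)$ by Step 1. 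Summing yields $\|(\varphi_k,\psi_k)\|_* \leq o(1) + o(1)\|(\varphi_k,\psi_k)\|_*$, contradicting $\|(\varphi_k,\psi_k)\|_*=1$.

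The main obstacle, as flagged in the paper, is the non-decaying potential part $P(r,y'')\varphi_k$ (and $Q(r,y'')\psi_k$) appearing in the representation: a naive $L^\infty$ estimate of $(-\Delta)^{-1}(P\varphi_k)$ does not close the weighted bound, so I would carry out a careful tail computation of $\int G(y,z)(1+\lambda|z-x_j|)^{-(N-2)/2-\tau}\,dz$, split into near-field and far-field. The precise choice $\tau=(N-4)/(N-2)$ and the window $\lambda_k\sim k^{(N-2)/(N-4)}$ are calibrated so that these terms reenter the $*$-class with a coefficient $o(1)$ uniformly in $k$. Should any region be too weak to close directly, a standard back-up is a rescaling/blow-up argument at a point $y_k$ realizing the $*$-norm: the rescaled pair $(\tilde\varphi_k,\tilde\psi_k)(z)=\lambda_k^{-(N-2)/2}(\varphi_k,\psi_k)(x_{j_k}+z/\lambda_k)$ converges to a bounded solution of the linearized critical system around the standard synchronized bubble, satisfying the limiting orthogonality conditions; the non-degeneracy established in \cite{Peng-Peng-Wang} forces this limit to vanish, in contradiction with $|\tilde\varphi_k(0)|\gtrsim 1$.
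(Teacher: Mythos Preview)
Your outline contains the right ingredients but misidentifies which step actually closes the argument. You claim that ``smallness for the bubble-linear contributions comes from the strong localization of $W_{1,\lambda_k}^{4/(N-2)}$ (which trades the $*$-weight for the $**$-weight with a factor $\lambda_k^{-\sigma}$ or $k^{-\sigma}$)'', and hence that the Green's representation directly yields $\|(\varphi_k,\psi_k)\|_*\leq o(1)+o(1)\|(\varphi_k,\psi_k)\|_*$. This is false. The Wei--Yan convolution estimate (Lemma~\ref{lmA.5}) gives
\[
\int_{\R^N}\frac{1}{|y-z|^{N-2}}\,W_{1,\lambda}^{\frac{4}{N-2}}(z)\sum_{j=1}^k\frac{1}{(1+\lambda|z-x_j|)^{\frac{N-2}{2}+\tau}}\,dz
\;\leq\; C\sum_{j=1}^k\frac{1}{(1+\lambda|y-x_j|)^{\frac{N-2}{2}+\tau+\theta}},
\]
which carries an \emph{improved decay exponent} $\theta>0$ but no small prefactor. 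At points $y$ with $\lambda|y-x_j|=O(1)$ the right side is of order one, so the bubble-linear part of the representation contributes a term of size comparable to $\|(\varphi,\psi)\|_*$, not $o(1)\|(\varphi,\psi)\|_*$. What one actually obtains is the paper's inequality \eqref{varphiest},
\[
\|(\varphi,\psi)\|_*\;\leq\; C\|(h_1,h_2)\|_{**}
+ C\,\frac{\displaystyle\sum_{j}(1+\lambda|y-x_j|)^{-\frac{N-2}{2}-\tau-\theta}}
{\displaystyle\sum_{j}(1+\lambda|y-x_j|)^{-\frac{N-2}{2}-\tau}},
\]
which is only informative \emph{away} from the bubble centers.

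Consequently the blow-up step is not a ``back-up'' but the essential closing argument. Inequality \eqref{varphiest} forces the normalized $(\varphi_k,\psi_k)$ to have $L^\infty$ mass bounded below on some fixed ball $B_{R/\lambda_k}(x_{j_k})$; rescaling there and passing to the limit produces a bounded solution of the linearized synchronized-bubble system that is orthogonal to its kernel, hence zero by the non-degeneracy from \cite{Peng-Peng-Wang}, contradicting the lower bound. Your plan should therefore reorder: (i) derive the $c_l$ bounds, (ii) establish \eqref{varphiest} (not the stronger $o(1)$ estimate you wrote), (iii) conclude concentration near some $x_j$, and (iv) finish by blow-up plus non-degeneracy. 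Also note that the $P\varphi$, $Q\psi$ terms are not the main obstacle you suggest: being bounded, they scale away in the blow-up limit and do not enter the final linearized system \eqref{eqlimit}.
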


\begin{proof}
By contradiction, we assume that there exist $k\rightarrow\infty$, $(\varphi_k,\psi_k)$,  $\lambda_k\in \Bigl[L_0k^{\frac{N-2}{N-4}},L_1k^{\frac{N-2}{N-4}}\Bigr]$, some $(\varphi_k,\psi_k)$ solving \eqref{eqlinear}, with  $\|(h_{1,k},h_{2,k})\|_{**}\rightarrow0$ and
$\|(\varphi_k,\psi_k)\|_{*}\geq c'>0$. We may assume that $\|(\varphi_k,\psi_k)\|_{*}=1$.
For simplicity, we drop the subscript $k$ and write \eqref{eqlinear} as
\begin{align}\label{eqinte1}
\varphi(y)&=\int_{\R^N}\frac{(2^*-1)}{|y-z|^{N-2}}W_{1,\lambda}^{\frac{4}{N-2}}\varphi(z)dz\nonumber \\
&\quad+\int_{\R^N}\frac{1}{|y-z|^{N-2}}\Big(\frac 1{(N-2)} W_{1,\lambda}^{\frac{4-N}{N-2}}W_{2,\lambda}^{\frac{N}{N-2}}\varphi(z)+\frac N{2(N-2)} W_{1,\lambda}^{\frac{2}{N-2}}W_{2,\lambda}^{\frac{2}{N-2}}\psi(z)\Big)dz\nonumber  \\
&\quad+\int_{\R^N}\frac{1}{|y-z|^{N-2}}\Big(h_1(z)+\sum_{i=1}^2c_i\sum_{j=1}^k w_{x_j,\lambda}^{2^*-2}Y_{j,i}\Big)dz
\end{align}
and
\begin{align}\label{eqinte2}
\psi(y)&=\int_{\R^N}\frac{(2^*-1)}{|y-z|^{N-2}}W_{2,\lambda}^{\frac{4}{N-2}}\psi(z)dz\nonumber \\
&\quad+\int_{\R^N}\frac{1}{|y-z|^{N-2}}\Big(\frac 1{(N-2)} W_{2,\lambda}^{\frac{4-N}{N-2}}W_{1,\lambda}^{\frac{N}{N-2}}\psi(z)+\frac N{2(N-2)} W_{2,\lambda}^{\frac{2}{N-2}}W_{1,\lambda}^{\frac{2}{N-2}}\varphi(z)\Big)dz\nonumber \\
&\quad+\int_{\R^N}\frac{1}{|y-z|^{N-2}}\Big(h_2(z)+\sum_{i=1}^2c_i\sum_{j=1}^k w_{x_j,\lambda}^{2^*-2}Z_{j,i}\Big)dz.
\end{align}

We are sufficed to deal with \eqref{eqinte1}, since for \eqref{eqinte2}, one can obtain similar estimates just in the same way.
From Lemma \ref{lmA.5}, we first get that
\begin{align}\label{1}
&\Big|\int_{\R^N}{|y-z|^{N-2}}W_{1,\lambda}^{\frac{4}{N-2}}\varphi(z)dz\Big|\nonumber \\
 \leq &C\|\varphi\|_*\int_{\R^N}\frac{1}{|y-z|^{N-2}}W_{1,\lambda}^{\frac{4}{N-2}}\sum_{j=1}^k\frac{\lambda^{\frac{N-2}{2}}}{(1+\lambda|z-x_j|)^{\frac{N-2}{2}+\tau}}dz\cr
 \leq &C\|\varphi\|_*\lambda^{\frac{N-2}{2}}\sum_{j=1}^k\frac{1}{(1+\lambda|y-x_j|)^{\frac{N-2}{2}+\theta}}.
\end{align}
Recalling that $W_{2,\lambda}=\kappa W_{1,\lambda}$, we obtain also
\begin{align}\label{2}
&\Big|\int_{\R^N}\frac{1}{|y-z|^{N-2}}\Big(\frac 1{(N-2)} W_{1,\lambda}^{\frac{4-N}{N-2}}W_{2,\lambda}^{\frac{N}{N-2}}\varphi(z)+\frac N{2(N-2)} W_{1,\lambda}^{\frac{2}{N-2}}W_{2,\lambda}^{\frac{2}{N-2}}\psi(z)\Big)dz\Big|\nonumber  \\
&\leq C\lambda^{\frac{N-2}{2}}\|(\varphi,\psi)\|_*\sum_{j=1}^k\frac{1}{(1+\lambda|y-x_j|)^{\frac{N-2}{2}+\theta+\tau}}.
\end{align}
Moreover, from Lemma \ref{lmA.4}, there hold
\begin{align}\label{3}
\Big|\int_{\R^N}\frac{1}{|y-z|^{N-2}}h_1(z)dz\Big|
\leq C\lambda^{\frac{N-2}{2}}\|h_1\|_{**}\sum_{j=1}^k\frac{1}{(1+\lambda|y-x_j|)^{\frac{N-2}{2}+\bar\sigma}}
\end{align}
and
\begin{align}\label{4}
&\Big|\int_{\R^N}\frac{1}{|y-z|^{N-2}}\sum_{j=1}^kW_{1,x_j}^{2^*-2}Y_{j,i}dz\Big|
 \leq C\lambda^{\frac{\lambda-2}{2}+n_i} \sum_{j=1}^k\frac{1}{(1+\lambda|y-x_j|)^{\frac{N-2}{2}+\tau}},
\end{align}
where $n_j=1, j=2,\cdots,N$, $n_1=-1$.

Now, we estimate $c_l,l=1,\cdots,N$. Multiplying the two equations in \eqref{eqlinear} by $Y_{1,l}$ and $Z_{1,l}$ respectively and integrating, we find that
\begin{align}\label{cl}
\sum_{i=1}^N\sum_{j=1}^k\left\langle(W_{1,x_j}^{2^*-2}Y_{j,i},W_{2,x_j}^{2^*-2}Z_{j,i}),(Y_{1,l},Z_{1,l})\right\rangle c_i
=\left\langle L_k(\varphi_1,\varphi_2),(Y_{1,l},Z_{1,l})\right\rangle-\left\langle (h_1,h_2),(Y_{1,l},Z_{1,l})\right\rangle.
\end{align}
From Lemma \ref{lmA.3} and since $\bar\sigma>1$, we have
\begin{align*}
\left|\left\langle (h_1,h_2),(Y_{1,l},Z_{1,l})\right\rangle\right|
&\leq C \|(h_1,h_2)\|_{**}\int_{\R^N}\frac{\lambda^{\frac{N-2}{2}+n_l}}{(1+|z-x_1|)^{N-2}}\sum_{j=1}^k\frac{\lambda^{\frac{N+2}{2}}}{(1+|z-x_j|)^{\frac{N+2}{2}+\tau}}dz\\
&\leq C\lambda^{n_l} \|(h_1,h_2)\|_{**}.
\end{align*}
By direct computation, we have
\begin{align}\label{eqs2.10}
|\langle P(r,y'')\varphi,Y_{1,l}\rangle|\leq C\|\varphi\|_{*}\int_{\R^N}\frac{\xi\lambda^{\frac{N-2}{2}+n_i}}{(1+\lambda|z-x_1|)^{(N-2)}}\sum\limits_{j=1}^k\frac{\lambda^{\frac{N+2}{2}}}{1+\lambda|y-x_j|^{\frac{N+2}{2}+\tau}}
=O\Bigl(\frac{\lambda^{n_i}\|\varphi\|_{*}}{\lambda^{1+\epsilon}}\Bigr).
\end{align}
Similarly, we have
\begin{align*}
\Bigl|\langle Q(r,y'')\psi,Z_{1,l}\rangle\Bigr|\leq O\Bigl(\frac{\lambda^{n_l}\|\psi\|_{*}}{\lambda^{1+\varepsilon}}\Bigr).
\end{align*}
On the other hand, we also have
\begin{align}\label{eqs2.11}
\left\langle L_k(\varphi,\psi),(Y_{1,l},Z_{1,l})\right\rangle
=\left\langle L_k(Y_{1,l},Z_{1,l}),(\varphi,\psi)\right\rangle.
\end{align}
Then
\begin{align*}
\eqref{eqs2.11}=&\frac{1}{(N-2)}\int_{\R^N}\Bigl[(N+2)\Bigl(W_{1,x_1}^{\frac4{N-2}}-W_{1,\lambda}^{\frac4{N-2}}\Bigr)
+\beta\Bigl(W_{1,x_1}^{\frac{4-N}{N-2}}W_{2,x_1}^{\frac N{N-2}}-W_{1,\lambda}^{\frac{4-N}{N-2}}W_{2,\lambda}^{\frac N{N-2}}\Bigr)\cr
&+\frac {N\beta}{2}\Bigl(W_{1,x_1}^{\frac{2}{N-2}}W_{2,x_1}^{\frac 2{N-2}}-W_{1,\lambda}^{\frac{2}{N-2}}W_{2,\lambda}^{\frac 2{N-2}}\Bigr)\Bigr]Y_{1,l}\varphi dy\\
&+\frac1{N-2}\int_{\R^N}\Bigl[(N+2)\Bigl(W_{1,x_1}^{\frac4{N-2}}-W_{2,\lambda}^{\frac4{N-2}}\Bigr)
+\Bigl(W_{2,x_1}^{\frac{4-N}{N-2}}W_{1,x_1}^{\frac N{N-2}}-W_{2,\lambda}^{\frac{4-N}{N-2}}W_{1,\lambda}^{\frac N{N-2}}\Bigr)\\
&+\frac{N\beta}{2}\Bigl(W_{2,x_1}^{\frac{2}{N-2}}W_{1,x_1}^{\frac 2{N-2}}-W_{2,\lambda}^{\frac{2}{N-2}}W_{1,\lambda}^{\frac 2{N-2}}\Bigr)\Bigr]Z_{1,l}\psi dy.
\end{align*}
Then, we have
\begin{align}\label{2-5}
\left\langle L_k(\varphi,\psi),(Y_{1,l},Z_{1,l})\right\rangle=O\Big(\frac{\lambda^{n_l}}{\lambda^{1+\varepsilon}}\Big)\|(\varphi,\psi)\|_*+\|(h_1,h_2)\|_{**}.
\end{align}

But, observe that, there exists some $\bar c>0$ such that
\begin{align*}
\sum_{j=1}^k\big\langle(W_{1,x_j}^{2^*-2}Y_{j,i},W_{2,x_j}^{2^*-2}Z_{j,i}),(Y_{1,l},Z_{1,l})\big\rangle
=\lambda^{2n_l}(\bar c+o(1))\delta_{il}.
\end{align*}
Therefore, from \eqref{cl}, we get that
\begin{align}\label{6}
c_l=O\left(\frac1{\lambda^{n_l}}\|(\varphi,\psi)\|_*+\|(h_1,h_2)\|_{**}\right).
\end{align}

Combining \eqref{1}-\eqref{6}, we obtain that
\begin{align}\label{varphiest}
\|(\varphi,\psi)\|_*\leq\Bigl(\|(h_1,h_2)\|_{**}+\frac{\sum_{j=1}^k
\frac{1}{(1+\lambda|y-x_j|)^{\frac{N-2}{2}+\tau+\theta}}}{\sum_{j=1}^k\frac{1}{(1+\lambda|y-x_j|)^{\frac{N-2}{2}+\tau}}}\Bigr).
\end{align}

Since $\|(\varphi,\psi)\|_*=1$, we get from
\eqref{varphiest} that there exists some $R,a>0$ such that for some $j$
\begin{align}\label{7}
\|(\varphi,\psi)\|_{L^\infty(B_R(x_j))}\geq a>0.
\end{align}
However, transformation $(\bar\varphi(y),\bar\psi(y))=(\varphi(y-x_j),\psi(y-x_j))$, notice that converges uniformly in any compact set to a solution
$(u,v)$ which satisfies
\begin{align}\label{eqlimit}
\begin{cases}
-\Delta u-(2^*-1)U_{0,\lambda}^{\frac{4}{N-2}}u
-\frac1{N-2} U_{0,\lambda}^{\frac{4}{N-2}}V_{0,\lambda}^{\frac{N}{N-2}}u-\frac {N\beta}{2(N-2)} U_{0,\lambda}^{\frac{2}{N-2}}V_{0,\lambda}^{\frac{2}{N-2}}v=0,\,\,\,y\in \R^{N},\\
-\Delta v-(2^*-1)V_{0,\lambda}^{\frac{4}{N-2}}v
-\frac1{N-2} V_{0,\lambda}^{\frac{4}{N-2}}U_{0,\lambda}^{\frac{N}{N-2}}v-\frac {N\beta}{2(N-2)}V_{0,\lambda}^{\frac{2}{N-2}}U_{0,\lambda}^{\frac{2}{N-2}}u=0,\,\,\,y\in\R^{N}
\end{cases}
\end{align}
for some $\lambda\in[L_1,L_2]$. Moreover, $(u,v)$ is perpendicular to the kernel of \eqref{eqlimit}. Hence, $(u,v)=(0,0)$,
which is a contradiction to \eqref{7}.
\end{proof}
As a result of Lemma \ref{lemlinear}, applying the same argument as in the proof of proposition 4.1 in \cite{DFM}. We can prove the following Proposition.
\begin{prop}\label{proplinear}
There exists $k_0>0$ and some constant $C>0$, independent of $k$, such that for all $k\geq k_0$ and all $(h_1,h_2)\in L^\infty(\R^N)\times L^\infty(\R^N)$,
the linear problem \eqref{eqlinear} has a unique solution $(\varphi,\psi)\equiv \mathbb L_k(h_1,h_2)$. Moreover, there hold that
\begin{align}\label{linearest}
\|\mathbb L_k(h_1,h_2)\|_*\leq C\|(h_1,h_2)\|_{**},\ \ \ \ |c_l|\leq \frac{1}{\lambda^{n_l}}\|(h_1,h_2)\|_{**}.
\end{align}
\end{prop}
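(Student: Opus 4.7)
The plan is to deduce this from the a priori estimate already established in Lemma \ref{lemlinear} by casting \eqref{eqlinear} as a Fredholm-type equation in the weighted $L^\infty$ setting and invoking a standard continuity/contraction argument, following the strategy in \cite{DFM}.

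First I would rewrite \eqref{eqlinear} in integral form analogous to \eqref{eqinte1}--\eqref{eqinte2}: using the Riesz potential $(-\Delta)^{-1}$ with Green's kernel $|y-z|^{2-N}$ on $\R^N$, the system becomes $(\varphi,\psi) = T_k(\varphi,\psi) + F_k(h_1,h_2) + \sum_i c_i G_{k,i}$, where $T_k$ bundles the zeroth-order terms from $\mathcal L_k$ (including $P,Q$ and the bubble-coefficient terms), $F_k$ is the Newtonian potential applied to $(h_1,h_2)$, and $G_{k,i}$ are the contributions from $W_{1,x_j}^{2^*-2}Y_{j,i}$ and $W_{2,x_j}^{2^*-2}Z_{j,i}$. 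The estimates \eqref{1}--\eqref{4} in the proof of Lemma \ref{lemlinear} already show that $T_k$ maps the space $E_*$ of pairs $(\varphi,\psi) \in H_s \times H_s$ with $\|(\varphi,\psi)\|_* < \infty$ continuously into itself, and that $F_k(h_1,h_2)\in E_*$ whenever $\|(h_1,h_2)\|_{**}<\infty$.

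Next I would restrict to the closed subspace
\[
E = \Bigl\{(\varphi,\psi)\in E_* : \bigl\langle(W_{1,x_j}^{2^*-2}Y_{j,l},W_{2,x_j}^{2^*-2}Z_{j,l}),(\varphi,\psi)\bigr\rangle=0,\ j=1,\dots,k,\ l=1,\dots,N\Bigr\},
\]
and use the orthogonality constraints to solve explicitly for the multipliers $c_i$ in terms of $(\varphi,\psi)$ and $(h_1,h_2)$, exactly as in formula \eqref{cl}, where the bilinear pairing matrix $\sum_j\langle(W_{1,x_j}^{2^*-2}Y_{j,i},W_{2,x_j}^{2^*-2}Z_{j,i}),(Y_{1,l},Z_{1,l})\rangle=\lambda^{2n_l}(\bar c+o(1))\delta_{il}$ is diagonally dominant and invertible. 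Substituting these $c_i$ back, the system becomes a genuine fixed-point equation $(\varphi,\psi)=\mathcal T_k(\varphi,\psi;h_1,h_2)$ on $E$, with $\mathcal T_k$ an affine map whose linear part is compact in $(E,\|\cdot\|_*)$ — compactness follows from the strict decay gain of the convolutions in \eqref{1}--\eqref{2} (a smaller exponent $\theta>0$ is picked up at each application).

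At this point the Fredholm alternative applies: uniqueness of solutions (injectivity of $\mathrm{Id}-\mathcal T_k$ on $E$) is precisely the statement of Lemma \ref{lemlinear}, because any nontrivial kernel element would violate the bound $\|(\varphi,\psi)\|_*\leq C\|(h_1,h_2)\|_{**}$ with $(h_1,h_2)=0$. Hence $\mathrm{Id}-\mathcal T_k$ is invertible for all $k\geq k_0$, yielding a unique solution $(\varphi,\psi)=:\mathbb L_k(h_1,h_2)\in E$. The quantitative bound $\|\mathbb L_k(h_1,h_2)\|_*\leq C\|(h_1,h_2)\|_{**}$ with $C$ independent of $k$ follows by a contradiction/blow-up argument identical to the one in Lemma \ref{lemlinear}: if no uniform $C$ existed, a rescaled subsequence would produce, in the limit, a nonzero bounded solution of \eqref{eqlimit} orthogonal to the kernel of the linearization, contradicting the known non-degeneracy from \cite{Peng-Peng-Wang}. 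Finally, the estimate $|c_l|\leq \lambda^{-n_l}\|(h_1,h_2)\|_{**}$ is read off directly from \eqref{6} once $\|(\varphi,\psi)\|_*$ is controlled by $\|(h_1,h_2)\|_{**}$.

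The main obstacle I anticipate is the compactness of the linear part of $\mathcal T_k$ in the weighted $L^\infty$ norm $\|\cdot\|_*$: unlike the Hilbert-space setting where compactness is automatic from Rellich's theorem, here one must verify it through the explicit kernel estimates of Lemma \ref{lmA.5} and show that the improved decay exponent produced by the convolution is strictly better than the one appearing in $\|\cdot\|_*$, uniformly in $k$. Once this is accomplished, everything else is a routine transcription of the argument in \cite{DFM}.
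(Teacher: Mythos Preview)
Your proposal is correct and matches the paper's approach exactly: the paper itself gives no detailed proof, stating only that the result follows from Lemma~\ref{lemlinear} by ``applying the same argument as in the proof of proposition~4.1 in~\cite{DFM}.'' Your write-up is essentially an expanded account of what that referenced argument entails, including the integral reformulation, the Fredholm alternative in the weighted space, and the extraction of the $c_l$ bound from~\eqref{6}.
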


\medskip
Now we consider the nonlinear problem
\begin{align}\label{eqnonlinear0}
\begin{cases}
-\Delta (W_{1,\lambda}+\varphi)+P(r,y'')(W_{1,\lambda}+\varphi)=(W_{1,\lambda}+\varphi)^{2^*-1}+\frac{\beta}{2} (W_1+\varphi)^{\frac{2^*}{2}-1}(W_{2,\lambda}+\psi)^{\frac{2^*}{2}},\  \vspace{0.12cm}\\
\,\,\,\,\,\,\,\,\,\,\,\,\,\,\,\,\,\,\,\,\,\,\,\,\,\,\,\,\,\,\,\,\,\,\,\,\,\,\,\,\,\,\,\,\,\,\,\,\,\,\,\,\,\,\,\,\,\,\,\,\,\,\,\,\,\,\,\,\,\,\,\,+\sum\limits_{i=1}^2c_i\sum\limits_{j=1}^kW_{1,x_j}^{2^*-2}Y_{j,i},\\
-\Delta (W_{2,\lambda}+\psi)+Q(r,y'')(W_{2,\lambda}+\psi)=(W_{2,\lambda}+\psi)^{2^*-1}+\frac{\beta}{2} (W_{2,\lambda}+\psi)^{\frac{2^*}{2}-1}(W_{1,\lambda}+\varphi)^{\frac{2^*}{2}}, \ \vspace{0.12cm}\\
\,\,\,\,\,\,\,\,\,\,\,\,\,\,\,\,\,\,\,\,\,\,\,\,\,\,\,\,\,\,\,\,\,\,\,\,\,\,\,\,\,\,\,\,\,\,\,\,\,\,\,\,\,\,\,\,\,\,\,\,\,\,\,\,\,\,\,\,\,\,\,\,\,\,\,\,\,\,+\sum\limits_{i=1}^2c_i\sum\limits_{j=1}^kW_{2,x_j}^{2^*-2}Z_{j,i},\\
(\varphi,\psi)\in H_s\times H_s,\vspace{0.12cm}\\
\big\langle (W_{1,x_j}^{2^*-2}Y_{j,l},W_{2,x_j}^{2^*-2}Z_{j,l}),(\varphi,\psi)\big\rangle=0,\ \ j=1,\ldots,k,\ \ l=1,2,\cdots,N.
\end{cases}
\end{align}

In this section, we are aimed to prove that
\begin{prop}\label{propnonlinear}
There exists $k_0>0$ and some constant $C>0$, independent of $k$, such that for all $k\geq k_0$, $\lambda\in [L_0k^{\frac{N-2}{N-4}},L_1k^{\frac{N-2}{N-4}}]$,
with $\bar\theta>0$ is a fixed small constant,
problem \eqref{eqnonlinear0} has a unique solution $(\varphi,\psi)=(\varphi(r,\lambda),\psi(r,\lambda))$ such that
\begin{align}\label{linearest}
\|(\varphi,\psi)\|_*\leq C\left(\frac1\lambda\right)^{1+\epsilon},\,\,\,|c_l|\leq C\left(\frac1\lambda\right)^{1+\epsilon},\,\,\,
|\varphi|\leq\frac12 U,\,\,\,|\psi|\leq\frac12 V,
\end{align}
where $\epsilon>0$ small enough.
\end{prop}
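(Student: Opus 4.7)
The approach is a standard Lyapunov--Schmidt contraction mapping argument, carried out in the weighted norm $\|\cdot\|_*$ supplied by Proposition \ref{proplinear}. I would first split the right hand side of \eqref{eqnonlinear0} into three pieces: the \emph{residual}
$$
l_k := \begin{pmatrix} -\Delta W_{1,\lambda}+P(r,y'')W_{1,\lambda}-W_{1,\lambda}^{2^*-1}-\tfrac{\beta}{2}W_{1,\lambda}^{\frac{2^*}{2}-1}W_{2,\lambda}^{\frac{2^*}{2}} \\ -\Delta W_{2,\lambda}+Q(r,y'')W_{2,\lambda}-W_{2,\lambda}^{2^*-1}-\tfrac{\beta}{2}W_{2,\lambda}^{\frac{2^*}{2}-1}W_{1,\lambda}^{\frac{2^*}{2}} \end{pmatrix},
$$
the linearization $L_k(\varphi,\psi)$ (already identified in \eqref{L}), and the higher--order remainder $N_k(\varphi,\psi)$ that collects the terms $(W_{1,\lambda}+\varphi)^{2^*-1}-W_{1,\lambda}^{2^*-1}-(2^*-1)W_{1,\lambda}^{2^*-2}\varphi$ and the analogous expansions of the coupling terms. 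Then \eqref{eqnonlinear0} is equivalent to $L_k(\varphi,\psi)=-l_k+N_k(\varphi,\psi)+\sum_i c_i\sum_j(W_{1,x_j}^{2^*-2}Y_{j,i},W_{2,x_j}^{2^*-2}Z_{j,i})$, which by Proposition \ref{proplinear} becomes the fixed point equation $(\varphi,\psi)=T(\varphi,\psi):=\mathbb L_k\bigl(-l_k+N_k(\varphi,\psi)\bigr)$ together with an explicit formula for the $c_l$.

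The plan is then to apply the Banach fixed point theorem on the closed set
$$
\mathcal B := \Bigl\{(\varphi,\psi)\in H_s\times H_s:\ \|(\varphi,\psi)\|_*\leq C_1 \lambda^{-(1+\epsilon)},\ |\varphi|\leq \tfrac12 W_{1,\lambda},\ |\psi|\leq \tfrac12 W_{2,\lambda}\Bigr\}
$$
for a sufficiently large constant $C_1$. First, using the fact that $(U_{x_j,\lambda},V_{x_j,\lambda})$ solves \eqref{weqs1.2} on the region $\xi\equiv 1$, the residual $l_k$ reduces to the potential contribution $PW_{1,\lambda}$, $QW_{2,\lambda}$ together with cross-bubble interaction and cut-off error, and Lemma \ref{lmA.6} yields the bound $\|l_k\|_{**}\leq C\lambda^{-(1+\epsilon)}$ on the range $\lambda\in[L_0k^{(N-2)/(N-4)},L_1 k^{(N-2)/(N-4)}]$. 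Combined with Proposition \ref{proplinear} this immediately gives $\|\mathbb L_k(-l_k)\|_*\leq C\lambda^{-(1+\epsilon)}$. The Lipschitz estimate on $N_k$ is then the heart of the contraction argument.

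The hardest step, and the novelty stressed in Remark 1.2, is the estimate of $N_k$ when $N\geq 5$. Because the coupling nonlinearity is $u^{\frac{2}{N-2}}v^{\frac{N}{N-2}}$ with $\frac{2}{N-2}<1$, a mean--value expansion produces the factor $(W_{1,\lambda}+\vartheta\varphi)^{\frac{4-N}{N-2}}$ whose exponent is \emph{negative}; this term blows up as $W_{1,\lambda}+\varphi\to 0^+$, and no $\|\cdot\|_*$ bound alone can control it. The pointwise constraint $|\varphi|\leq \tfrac12 W_{1,\lambda}$ and $|\psi|\leq \tfrac12 W_{2,\lambda}$ on $\mathcal B$ is precisely what rescues the argument: it forces $\tfrac12 W_{1,\lambda}\leq W_{1,\lambda}+\varphi\leq \tfrac32 W_{1,\lambda}$ so that the singular factor is dominated by $W_{1,\lambda}^{\frac{4-N}{N-2}}$, and a careful bookkeeping then gives $\|N_k(\varphi,\psi)\|_{**}\leq C\|(\varphi,\psi)\|_*^{1+\sigma}$ and a corresponding Lipschitz bound with small constant, so that $T$ contracts on $\mathcal B$ in the $\|\cdot\|_*$ norm.

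The final ingredient is the verification that $T$ also preserves the pointwise part of $\mathcal B$; this is not implied by the $\|\cdot\|_*$ bound and is where Lemma \ref{lmA.6} will play its decisive role. Using the integral representation of $\mathbb L_k$ (convolution with $|y-z|^{2-N}$) together with the pointwise bounds of Lemmas \ref{lmA.4}--\ref{lmA.5} on convolutions against weighted data, and the sharp pointwise control of $l_k+N_k(\varphi,\psi)$ by a small multiple of $W_{1,\lambda}^{2^*-1}+W_{2,\lambda}^{2^*-1}$ supplied by Lemma \ref{lmA.6}, one checks that $|\mathbb L_k(\cdot)_1|\leq \tfrac12 W_{1,\lambda}$ and $|\mathbb L_k(\cdot)_2|\leq \tfrac12 W_{2,\lambda}$, closing the iteration. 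The Banach fixed point theorem then yields a unique $(\varphi,\psi)\in \mathcal B$, and the bound on $|c_l|$ follows from \eqref{linearest} applied to $h=-l_k+N_k(\varphi,\psi)$ with the already-established estimate $\|h\|_{**}\leq C\lambda^{-(1+\epsilon)}$. The main obstacle of the proof is thus the simultaneous control of $N_k$ in the weighted norm \emph{and} of the image of $T$ in the pointwise sense, both of which hinge on the auxiliary Lemma \ref{lmA.6} tailored for $N\geq 5$.
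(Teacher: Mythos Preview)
Your overall strategy---rewrite \eqref{eqnonlinear0} as a fixed-point equation for $\mathbb L_k$ and contract on a set carrying both a $\|\cdot\|_*$ bound and the pointwise constraints $|\varphi|\le\tfrac12 W_{1,\lambda}$, $|\psi|\le\tfrac12 W_{2,\lambda}$---is exactly the paper's. The two places where you invoke Lemma~\ref{lmA.6}, however, are misattributions. The residual estimate $\|l_k\|_{**}\le C\lambda^{-(1+\epsilon)}$ is the content of Lemma~\ref{lemR}, which decomposes $l_k$ (called $R_k$ there) into bubble--interaction, potential, and cut-off errors and bounds each separately in $\|\cdot\|_{**}$; Lemma~\ref{lmA.6} says nothing about the residual. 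Likewise, Lemma~\ref{lmA.6} does not furnish pointwise control of $l_k+N_k$: its statement is that any \emph{solution} of \eqref{eqnonlinear0} with finite $\|\cdot\|_*$ norm automatically satisfies $|\varphi|\le\tfrac12 U$, $|\psi|\le\tfrac12 V$. What you describe in your final paragraph---bounding the right-hand side pointwise by a small multiple of $W_{i,\lambda}^{2^*-1}$ and passing through the Green's function---is really a sketch of how Lemma~\ref{lmA.6} is \emph{proved} (the paper defers this to \cite{Guo-Wang-Wang}), not an application of its statement. The paper itself is terse here, simply asserting that ``$A$ maps $E$ to $E$'' after checking only the $\|\cdot\|_*$ part; your instinct that the pointwise preservation needs a separate argument is correct, and the mechanism is indeed Lemma~\ref{lmA.6} (more precisely, its proof method), but applied to close the iteration rather than to bound $l_k$.
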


Rewrite the nonlinear problem \eqref{eqnonlinear0} as
\begin{align}\label{eqnonlinear}
\begin{cases}
L_k(\varphi,\psi)=R_k+N_k(\varphi,\psi),\vspace{0.12cm}\\
(\varphi,\psi)\in H_s\times H_s,\vspace{0.12cm}\\
\big\langle (W_{1,x_j}^{2^*-2}Y_{j,l},W_{2,x_j}^{2^*-2}Z_{j,l}),(\varphi,\psi)\big\rangle=0,\ \ j=1,\ldots,k,\ \ l=1,\cdots,N,
\end{cases}
\end{align}
where the operator $L_k$ is defined in \eqref{L}, and
\begin{align}\label{N}
&N_k(\varphi,\psi)=\left(
N_k^{1,1}+2N_k^{1,2},
N_k^{2,1}+2N_k^{2,2}
\right), \,\,\,R_k=(R_{1,k},R_{2,k}),
\end{align}
with
\begin{small}
\begin{align*}
&N_k^{1,1}=(W_{1,\lambda}+\varphi)^{2^*-1}-W_{1,\lambda}^{2^*-1}-(2^*-1)W_{1,\lambda}^{2^*-2}\varphi,\cr
&N_K^{1,2}=(W_{1,\lambda}+\varphi)^{\frac{2^*}{2}-1}(W_{2,\lambda}+\psi)^{\frac{2^*}{2}}-W_{1,\lambda}^{\frac{2^*}{2}-1}W_{2,\lambda}^{\frac{2^*}{2}}
-\Bigl(\frac {2^*}{2}-1\Bigr) W_{1,\lambda}^{\frac{2^*}{2}-2}W_{2,\lambda}^{\frac{2^*}{2}}\varphi-\frac {2^*}{2} W_{1,\lambda}^{\frac{2^*}{2}-1}W_{2,\lambda}^{\frac{2^*}{2}-1}\psi,\cr
&N_K^{2,1}=(W_{2,\lambda}+\psi)^{2^*-1}-W_{2,\lambda}^{2^*-1}-(2^*-1)W_{2,\lambda}^{2^*-2}\psi,\cr
&N_K^{2,2}=(W_{2,\lambda}+\psi)^{\frac{2^*}{2}-1}(W_{1,\lambda}+\varphi)^{\frac{2^*}{2}}-W_{2,\lambda}^{\frac{2^*}{2}-1}W_{1,\lambda}^{\frac{2^*}{2}}
-\Bigl(\frac{2^*}{2}-1\Bigr) W_{2,\lambda}^{\frac{2^*}{2}-2}W_{1,\lambda}^{\frac{2^*}{2}}\psi-\frac{2^*}{2}W_{2,\lambda}^{\frac{2^*}{2}-1}W_{1,\lambda}^{\frac{2^*}{2}-1}\varphi.
\end{align*}
\end{small}

Denote
\begin{align*}
R_{1,k}=&\Bigl(W_{1,\lambda}^{2^*-1}-\sum\limits_{j=1}^kW_{1,x_j}^{2^*-1}\Bigr)-P(r,y'')W_{1,\lambda}+W_{1,\lambda}^*\Delta\xi+2\nabla\xi\nabla W_{1,\lambda}^*\cr
&+\frac{\beta}{2}\Bigl(W_{2,\lambda}^{\frac{2^*}{2}-1}W_{1,\lambda}^{\frac{2^*}{2}}
-\sum\limits_{j=1}^kW_{2,x_j}^{\frac{2^*}{2}}W_{1,x_j}^{\frac{2^*}{2}}\Bigr)
\end{align*}
and
\begin{align*}
R_{2,k}=&\Bigl(W_{2,\lambda}^{2^*-1}-\sum\limits_{j=1}^kW_{2,x_j}^{2^*-1}\Bigr)-Q(r,y'')W_{2,\lambda}+W_{2,\lambda}^*\Delta\xi+2\nabla\xi\nabla W_{2,\lambda}^*\cr
&+\frac{\beta}{2}\Bigl(W_{1,\lambda}^{\frac{2^*}{2}-1}W_{2,\lambda}^{\frac{2^*}{2}}
-\sum\limits_{j=1}^kW_{2,x_j}^{\frac{2^*}{2}}W_{1,x_j}^{\frac{2^*}{2}}\Bigr).
\end{align*}

In the following, we will use the contraction mapping theorem to show that there exists a unique solution to problem \eqref{eqnonlinear}
in the set that $\|(\varphi,\psi)\|_*$ is small. In order to do this, we first estimate $N_k(\varphi,\psi)$ and $R_k$,
and, just as before, we may drop the subscript $k$ for convenience.
\begin{lem}\label{lemN}
If $N\geq5$, then for any $(\varphi,\psi)$ satisfying $|\varphi|\leq\frac12 U_1(y), |\psi|\leq\frac12 V_1(y)$
there holds
\begin{align*}
\|N_k(\varphi,\psi)\|_{**}\leq C\|(\varphi,\psi)\|_*^{1+\delta},
\end{align*}
where $0<\delta<\frac{4}{N-2}$.
\end{lem}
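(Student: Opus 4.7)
My plan is to split
\[
N_k(\varphi,\psi)=(N_k^{1,1}+2N_k^{1,2},\;N_k^{2,1}+2N_k^{2,2}),
\]
bound each piece pointwise, and then convert the pointwise bounds into an $\|\cdot\|_{**}$ estimate using the explicit decay of $W_{1,\lambda},W_{2,\lambda}$. The hypothesis $|\varphi|\le\frac12 U_1$, $|\psi|\le\frac12 V_1$ is the essential ingredient: it keeps $W_{1,\lambda}+\varphi$ and $W_{2,\lambda}+\psi$ comparable to $W_{1,\lambda}$, $W_{2,\lambda}$, so the nonlinearity admits a second-order Taylor remainder control even though the coupling exponent $\frac{2^*}{2}-1=\frac{2}{N-2}$ is strictly less than $1$ and $t\mapsto t^{2/(N-2)}$ fails to be $C^2$ at the origin.

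\textbf{Pointwise estimates.} For the pure-power piece $N_k^{1,1}$ I apply the elementary inequality
\[
|(a+b)^p-a^p-pa^{p-1}b|\le C\min\{a^{p-2}b^2,\,|b|^p\}\qquad(a>0,\ |b|\le a/2)
\]
with $p=2^*-1$, separating the subcases $N=5,6$ (where $p\ge 2$) and $N\ge 7$ (where $p<2$). For the cross piece $N_k^{1,2}$ I Taylor-expand $g(u,v)=u^{\frac{2^*}{2}-1}v^{\frac{2^*}{2}}$ to second order around $(W_{1,\lambda},W_{2,\lambda})$; the remainder carries the negative-exponent factor $W_{1,\lambda}^{\frac{4-N}{N-2}}$, which is only safe because the smallness hypothesis keeps the bases away from zero. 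Using $W_{2,\lambda}=\kappa W_{1,\lambda}$ on $\mathrm{supp}\,\xi$ to consolidate powers, the three second-derivative terms $g_{uu},g_{uv},g_{vv}$ all collapse to $W_{1,\lambda}^{(6-N)/(N-2)}=W_{1,\lambda}^{2^*-3}$, so
\[
|N_k^{1,1}|+|N_k^{1,2}|\le C\,W_{1,\lambda}^{2^*-3}(\varphi^2+\psi^2)+C(|\varphi|^{2^*-1}+|\psi|^{2^*-1}),
\]
and analogously for $N_k^{2,1},N_k^{2,2}$.

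\textbf{Conversion and main obstacle.} For any $\delta\in(0,\frac{4}{N-2})$, using $|\varphi|\le CW_{1,\lambda}$ I interpolate
\[
|\varphi|^{2^*-1}+W_{1,\lambda}^{2^*-3}\varphi^2\le C\,W_{1,\lambda}^{2^*-2-\delta}|\varphi|^{1+\delta},
\]
and likewise for $\psi$; the exponent $\delta<\frac{4}{N-2}=2^*-2$ is exactly what is needed to absorb $|\varphi|^{2^*-2-\delta}$ into $W_{1,\lambda}^{2^*-2-\delta}$. Plugging in the definitional bound $|\varphi(y)|\le\lambda^{(N-2)/2}\|\varphi\|_*\sum_j(1+\lambda|y-x_j|)^{-(N-2)/2-\tau}$ together with the explicit decay of $W_{1,\lambda}$, the factor $W_{1,\lambda}^{2^*-2-\delta}$ supplies exactly the extra algebraic decay needed to reassemble the weight $\lambda^{(N+2)/2}\sum_j(1+\lambda|y-x_j|)^{-(N+2)/2-\tau}$, provided one first bounds the $(1+\delta)$-th power of a sum by a sum of $(1+\delta)$-th powers and then invokes the bubble-separation estimates from Lemma \ref{lmA.5} together with the geometry $|x_i-x_j|\ge c\lambda^{-1}k$. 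The delicate point throughout is the cross term $N_k^{1,2}$: the sub-unit coupling exponent $\frac{2}{N-2}$ and the negative-power factor $W_{1,\lambda}^{(4-N)/(N-2)}$ in its Hessian preclude any honest quadratic bound in the absence of the a priori envelope $|\varphi|\le\frac12 U_1$, and the upper restriction $\delta<\frac{4}{N-2}$ is tight for the same reason: any larger $\delta$ would leave too little algebraic decay in $(1+\lambda|y-x_j|)$ to fit inside the $\|\cdot\|_{**}$-weight.
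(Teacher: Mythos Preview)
Your strategy is sound and runs parallel to the paper's argument. The one organizational difference concerns the cross term $N_k^{1,2}$: the paper does not perform a unified second--order Taylor expansion of $g(u,v)=u^{\frac{2^*}{2}-1}v^{\frac{2^*}{2}}$ but instead writes $N_k^{1,2}=I+II+III$, isolating successively the $\psi$--remainder, the $\varphi$--remainder, and a mixed piece, and handles $II$ via the scalar inequality $(1+t)^{2/(N-2)}-1-\tfrac{2}{N-2}t=O(|t|^{1+\delta})$ for $|t|\le\tfrac12$. Your route through the Hessian bound $|N_k^{1,2}|\le CW_{1,\lambda}^{2^*-3}(\varphi^2+\psi^2)$ and subsequent interpolation $W_{1,\lambda}^{2^*-3}\varphi^2\le CW_{1,\lambda}^{2^*-2-\delta}|\varphi|^{1+\delta}$ is a bit more streamlined and yields the same pointwise control; both depend on the envelope $|\varphi|\le\tfrac12W_{1,\lambda}$ in exactly the same way.

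One point in the conversion step needs correcting. The phrase ``bound the $(1+\delta)$-th power of a sum by a sum of $(1+\delta)$-th powers'' goes the wrong way, since $1+\delta>1$. What you actually need (and what the paper uses, e.g.\ in its estimate \eqref{2.4-1}) is first to absorb $W_{1,\lambda}^{2^*-2-\delta}$ via $(1+\lambda|y-x_j|)^{-(N-2)}\le(1+\lambda|y-x_j|)^{-(N-2)/2-\tau}$, reducing to $\bigl(\sum_j(1+\lambda|y-x_j|)^{-(N-2)/2-\tau}\bigr)^{2^*-1}$, and then apply H\"older with exponents $2^*-1$ and $\tfrac{N+2}{4}$ to obtain
\[
\Bigl(\sum_j a_j^{\frac{N-2}{2}+\tau}\Bigr)^{2^*-1}\le\Bigl(\sum_j a_j^{\frac{N+2}{2}+\tau}\Bigr)\Bigl(\sum_j a_j^{\tau}\Bigr)^{\frac{4}{N-2}},\qquad a_j=(1+\lambda|y-x_j|)^{-1},
\]
the last factor being $O(1)$ by the separation $\lambda|x_i-x_j|\gtrsim k^{2/(N-4)}$. (Your stated geometry $|x_i-x_j|\ge c\lambda^{-1}k$ should read $|x_i-x_j|\ge cr_0/k$, and the pointwise sum bound here comes from Lemma~\ref{lmA.3} rather than Lemma~\ref{lmA.5}.) With this fix your argument goes through and matches the paper's conclusion.
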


\begin{proof}
By definition of $N=N_k$ in \eqref{N}, observing that for the terms $N^{1,1}$ and
$N^{2,1}$, we just follow the estimates as in Lemma 2.4 of \cite{Wei-Yan} to get that, for $i=1,2$,
\begin{align*}
\|N^{i,1}\|_{**}\leq C\|(\varphi,\psi)\|_*^{2^*-1}.
\end{align*}
Since for $p\geq\frac{N-2-m}{N-2}$,
\begin{align*}
\sum_{j=1}^k\frac{1}{(1+|y-x_j|)^{p}}\leq C+\sum_{j=2}^k\frac{1}{|x_j-x_1|^{p}}\leq C,
\end{align*}
combining H\"older inequalities, for $i=1,2$, we obtain
\begin{align}\label{2.4-1}
\begin{split}
|N^{i,1}|&\leq\|\varphi\|_*^{2^*-1}\Big(\sum_{j=1}^k\frac{\lambda^{\frac{N-2}{2}}}{(1+\lambda|y-x_j|)^{\frac{N-2}{2}+\tau}}\Big)^{2^*-1}\\
&\leq C\|\varphi\|_*^{2^*-1}\lambda^{\frac{N+2}{2}}\sum_{j=1}^k\frac{1}{(1+\lambda|y-x_j|)^{\frac{N+2}{2}+\tau}}\Big(\sum_{j=1}^k\frac{1}{(1+\lambda|y-x_j|)^{\tau}}\Big)^{\frac4{N-2}}\\
&\leq C\|(\varphi,\psi)\|_*^{2^*-1}\lambda^{\frac{N+2}{2}}\sum_{j=1}^k\frac{1}{(1+\lambda|y-x_j|)^{\frac{N+2}{2}+\tau}}.
\end{split}
\end{align}

Next for the terms $N^{1,2}$ and $N^{2,2}$ in \eqref{N},
it suffices to deal with $N^{1,2}$, since the other one can be estimated in the same way.

 Rewrite
\begin{align}\label{2.4-2}
 N^{1,2}=&(W_{1,\lambda}+\varphi)^{\frac{2}{N-2}}(W_{2,\lambda}+\psi)^{\frac{N}{N-2}}-W_{1,\lambda}^{\frac{2}{N-2}}W_{2,\lambda}^{\frac{N}{N-2}}
-\frac 2{N-2} W_{1,\lambda}^{\frac{4-N}{N-2}}W_{2,\lambda}^{\frac{N}{N-2}}\varphi\cr
&-\frac N{N-2} W_{1,\lambda}^{\frac{2}{N-2}}W_{2,\lambda}^{\frac{2}{N-2}}\psi
=I+II+III,
\end{align}

where
\begin{align*}
&I= (W_{1,\lambda}+\varphi)^{\frac{2}{N-2}}\Big((W_{2,\lambda}+\psi)^{\frac{N}{N-2}}-W_{2,\lambda}^{\frac{N}{N-2}}-\frac N{N-2}W_{2,\lambda}^{\frac{2}{N-2}}\psi\Big),\\
&II=\Big((W_{1,\lambda}+\varphi)^{\frac{2}{N-2}}-W_{1,\lambda}^{\frac{2}{N-2}}-\frac 2{N-2} W_{1,\lambda}^{\frac{4-N}{N-2}}\varphi
\Big)W_{2,\lambda}^{\frac{N}{N-2}},\\
&III=
\frac N{N-2}\Big( (W_{1,\lambda}+\varphi)^{\frac{2}{N-2}}-W_{1,\lambda}^{\frac{2}{N-2}}\Big)W_{2,\lambda}^{\frac{2}{N-2}}\psi.
\end{align*}
First for $I$, we have
\begin{align*}
|I|\leq C\Bigl(|\psi|^{\frac{N}{N-2}}W_{1,\lambda}^{\frac{2}{N-2}}+|\psi|^{\frac{N}{N-2}}|\varphi|^{\frac{2}{N-2}}\Bigr):=I_1+I_2.
\end{align*}
We estimate by H\"older inequalities,
\begin{align*}
|I_1|\leq& C\|\psi\|_*^{\frac{N}{N-2}}\Big(\sum_{j=1}^k\frac{\lambda^{\frac{N-2}{2}}}{(1+\lambda|y-x_j|)^{\frac{N-2}{2}+\tau}}\Big)^{\frac N{N-2}}
\Big(\sum_{j=1}^k\frac{\lambda^{\frac{N-2}{2}}}{(1+\lambda|y-x_j|)^{N-2}}\Big)^{\frac{2}{N-2}}\cr
\leq &C \lambda^{\frac{N+2}{2}}\|\psi\|_*^{\frac{N}{N-2}}\Big(\sum_{j=1}^k\frac{1}{(1+\lambda|y-x_j|)^{\frac{N-2}{2}+\tau}}\Big)^{2^*-1}\cr
\leq & C\lambda^{\frac{N+2}{2}}\|\psi\|_*^{\frac{N}{N-2}}\sum_{j=1}^k\frac{1}{(1+\lambda|y-x_j|)^{\frac{N+2}{2}+\tau}}\Big(\sum_{j=1}^K\frac{1}{(1+\lambda|y-x_j|)^{\tau}}\Big)^{\frac4{N-2}}\cr
\leq& C\lambda^{\frac{N+2}{2}}\|(\varphi,\psi)\|_*^{\frac{N}{N-2}}\sum_{j=1}^k\frac{1}{(1+\lambda|y-x_j|)^{\frac{N+2}{2}+\tau}}.
\end{align*}
Moreover, we have
\begin{align*}
|I_2|&\leq C\|\psi\|_*^{\frac{N}{N-2}}\|\varphi\|_*^{\frac{2}{N-2}}\Big(\sum_{j=1}^k\frac{\lambda^{\frac{N-2}{2}}}{(1+\lambda|y-x_j|)^{\frac{N-2}{2}+\tau}}\Big)^{\frac {N+2}{N-2}}\cr
&\leq C\lambda^{\frac{N+2}{2}}\|\psi\|_*^{\frac{N}{N-2}}\|\varphi\|_*^{\frac{2}{N-2}}\sum_{j=1}^k\frac{1}{(1+\lambda|y-x_j|)^{\frac{N+2}{2}+\tau}}
\Big(\sum_{j=1}^k\frac{1}{(1+\lambda|y-x_j|)^{\tau}}\Big)^{\frac {4}{N-2}}\cr
&\leq C\lambda^{\frac{N+2}{2}}\|\psi\|_*^{\frac{N}{N-2}}\|\varphi\|_*^{\frac{2}{N-2}}\sum_{j=1}^k\frac{1}{(1+\lambda|y-x_j|)^{\frac{N+2}{2}+\tau}}.
\end{align*}
Therefore, we get
\begin{align*}
|I|\leq C\lambda^{\frac{N+2}{2}}\Bigl(\|\psi\|_*^{\frac{N}{N-2}}+\|\psi\|_*^{\frac{N}{N-2}}\|\varphi\|_*^{\frac{2}{N-2}}\Bigr)\sum_{j=1}^k\frac{1}{(1+\lambda|y-x_j|)^{\frac{N+2}{2}+\tau}}.
\end{align*}
For $II$, we will deal with it by dividing into the following two cases.
It follows from Lemma \ref{lmA.6} that $|\frac{\varphi_i}{W_i}|<\frac{1}{2}.$ Using the basic inequality
\begin{align}
(1+t)^{\frac{2}{N-2}}-1-\frac{2}{N-2}t=O(t^{2}),\,\,\,\text{for}\,\,\,|t|<\frac12,
\end{align}
we obtain if $N=5$ by using H\"older again,
\begin{align}\label{2.4-4}
 |II|=&\Big|\Big((W_{1,\lambda}+\varphi)^{\frac{2}{N-2}}-W_{1,\lambda}^{\frac{2}{N-2}}-\frac 2{N-2} W_{1,\lambda}^{\frac{4-N}{N-2}}\varphi
\Big)W_{2,\lambda}^{\frac{N}{N-2}}\Big|\cr
=&W_{1,\lambda}^{\frac{2}{N-2}}W_{2,\lambda}^{\frac{N}{N-2}}\Big|\Bigl(1+\frac{\varphi}{W_{1,\lambda}}\Bigr)^{\frac{2}{N-2}}-1-\frac 2{N-2} \frac{\varphi}{W_{1,\lambda}}
\Big|
\leq C W_{1,\lambda}^{\frac{2}{N-2}-2}W_{2,\lambda}^{\frac{N}{N-2}}|\varphi|^2\cr
\leq& C \|\varphi\|_*^{2}\lambda^{\frac{N+2}{2}}\Big(\sum_{j=1}^k\frac{1}{(1+\lambda|y-x_j|)^{N-2}}\Big)^{\frac{6-N}{N-2}}\Big(\sum_{j=1}^k\frac{1}{(1+\lambda|y-x_j|)^{\frac{N-2}{2}+\tau}}\Big)^{2}\cr
\leq&  C \lambda^{\frac{N+2}{2}}\|\varphi\|_*^{2}\Big(\sum_{j=1}^k\frac{1}{(1+\lambda|y-x_j|)^{\frac{N-2}{2}+\tau}}\Big)^{\frac {N+2}{N-2}}\cr
\leq&  C \lambda^{\frac{N+2}{2}}\|\varphi\|_*^{2}
\sum_{j=1}^k\frac{1}{(1+\lambda|y-x_j|)^{\frac{N+2}{2}+\tau}}.
\end{align}

Also for $|t|<1$ and $N\geq 6,$ there holds
$(1+t)^{\frac{2}{N-2}}-1-\frac{2}{N-2}t=O(t^{1+\delta}),$ where $\frac{4}{N-2}-\delta>0$.
Hence when $N\geq 6,$ we have
\begin{align*}
\begin{split}
 |II|&=\Big|\Big((W_{1,\lambda}+\varphi)^{\frac{2}{N-2}}-W_{1,\lambda}^{\frac{2}{N-2}}-\frac 2{N-2} W_{1,\lambda}^{\frac{4-N}{N-2}}\varphi
\Big)W_{2,\lambda}^{\frac{N}{N-2}}\Big|\cr
&=W_{1,\lambda}^{\frac{2}{N-2}}W_{2,\lambda}^{\frac{N}{N-2}}\Big|\Bigl(1+\frac{\varphi}{W_{1,\lambda}}\Bigr)^{\frac{2}{N-2}}-1-\frac 2{N-2} \frac{\varphi}{W_{1,\lambda}}
\Big|
\leq C W_{1,\lambda}^{\frac{2}{N-2}}W_{2,\lambda}^{\frac{N}{N-2}}\Big|\frac{\varphi}{W_{1,\lambda}}\Big|^{1+\delta}\cr
&\leq C\|\varphi\|_{*}^{1+\delta}\Big(\sum\limits_{j=1}^k\frac{\lambda^{\frac{N-2}{2}}}{(1+\lambda|y-x_j|)^{N-2}}\Big)^{\frac{4}{N-2}-\delta}\Big(\sum\limits_{j=1}^k\frac{\lambda^{\frac{N-2}{2}}}
{(1+\lambda|y-x_j|)^{\frac{N-2}{2}+\tau}}\Big)^{1+\delta}\cr
&\leq C\lambda^{\frac{N+2}{2}}\|\varphi\|_{*}^{1+\delta}\Big(\sum\limits_{j=1}^k\frac{1}{(1+\lambda|y-x_j|)^{\frac{N-2}{2}+\tau}}\Big)^{\frac{N+2}{N-2}}
\leq C\lambda^{\frac{N+2}{2}}\|\varphi\|_{*}^{1+\delta}\sum\limits_{j=1}^k\frac{1}{(1+\lambda|y-x_j|)^{\frac{N+2}{2}+\tau}},
\end{split}
\end{align*}
since
\begin{align*}
\Big(\sum\limits_{j=1}^k\frac{1}{(1+\lambda|y-x_j|)^{\bar{\sigma}}}\Big)^{\frac{N+2}{N-2}}
&\leq\Big(\sum\limits_{j=1}^k\frac{1}{(1+\lambda|y-x_j|)^{\frac{N+2}{2}+\tau}}\Big)\Big(\sum\limits_{j=1}^k\frac{1}{(1+\lambda|y-x_j|)^\tau}\Big)^{\frac{4}{N-2}}\cr
&\leq C\Bigl(\frac{1}{\lambda}\Bigr)^{1+\epsilon}\sum\limits_{j=1}^k\frac{1}{(1+\lambda|y-x_j|)^{\frac{N+2}{2}+\tau}},
\end{align*}
which means that
\begin{align}\label{2.4-4''}
\begin{split}
 |II|\leq C \lambda^{\frac{N+2}{2}}\|\varphi\|_*^{1+\delta}\sum_{j=1}^k\frac{1}{(1+\lambda|y-x_j|)^{\frac{N+2}{2}+\tau}}.
\end{split}
\end{align}

Finally, for $III$, we have
\begin{align}\label{2.4-5}
\begin{split}
 |III|&\leq C
|\varphi|^{\frac{2}{N-2}}|\psi|W_{2,\lambda}^{\frac{2}{N-2}}\cr
&\leq C\|\varphi\|_*^{\frac{2}{N-2}}\|\psi\|_*\Big(\sum_{j=1}^k\frac{\lambda^{\frac{N-2}{2}}}{(1+\lambda|y-x_j|)^{\frac{N-2}{2}+\tau}}\Big)^{\frac{N}{N-2}}
\Big(\sum_{j=1}^k\frac{\lambda^{\frac{N-2}{2}}}{(1+\lambda|y-x_j|)^{N-2}}\Big)^{\frac{2}{N-2}}\cr
&\leq C\lambda^{\frac{N+2}{2}}\|\varphi\|_*^{\frac{2}{N-2}}\|\psi\|_*\Big(\sum_{j=1}^k\frac{1}{(1+\lambda|y-x_j|)^{\frac{N-2}{2}+\tau}}\Big)^{\frac {N+2}{N-2}}\cr
&\leq  C\lambda^{\frac{N+2}{2}} \|\varphi\|_*^{\frac{2}{N-2}}\|\psi\|_*
\sum_{j=1}^k\frac{1}{(1+\lambda|y-x_j|)^{\frac{N+2}{2}+\tau}}.
\end{split}
\end{align}

From \eqref{2.4-1} to \eqref{2.4-5}, we obtain that
\begin{align*}
\|N(\varphi,\psi)\|_{**}\leq C\|(\varphi,\psi)\|_*^{1+\delta}.
\end{align*}
\end{proof}
Next, we estimate $R_k$.
\begin{lem}\label{lemR}
Then, there exists some small $\epsilon>0$ such that
\begin{align}\label{RK}
\|R_k\|_{**}\leq C\left(\frac1{\lambda}\right)^{1+\epsilon}.
\end{align}
\end{lem}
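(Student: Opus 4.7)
The plan is to estimate $R_{1,k}$ (the estimate on $R_{2,k}$ is completely symmetric) by splitting it into four pieces:
\begin{align*}
A_1 &= W_{1,\lambda}^{2^*-1}-\sum_{j=1}^k W_{1,x_j}^{2^*-1}, \qquad A_2 = -P(r,y'')\,W_{1,\lambda},\\
A_3 &= W_{1,\lambda}^*\Delta\xi+2\nabla\xi\cdot\nabla W_{1,\lambda}^*, \qquad A_4 = \frac{\beta}{2}\Bigl(W_{2,\lambda}^{\frac{2^*}{2}-1}W_{1,\lambda}^{\frac{2^*}{2}}-\sum_{j=1}^k W_{2,x_j}^{\frac{2^*}{2}-1}W_{1,x_j}^{\frac{2^*}{2}}\Bigr),
\end{align*}
and bound each in the $\|\cdot\|_{**}$-norm by $C\lambda^{-(1+\epsilon)}$.

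For $A_1$ I would follow the standard reduction strategy used in \cite{Wei-Yan} and \cite{Peng-Wang-Yan}: partition $\R^N$ into Voronoi cells $\Omega_j=\{y:|y-x_j|\le|y-x_\ell|,\ \forall\,\ell\}$, expand the power $W_{1,\lambda}^{2^*-1}$ around $W_{1,x_j}$ on $\Omega_j$, and control the interaction $\sum_{\ell\ne j}W_{1,x_\ell}$ by means of Lemmas~\ref{lmA.3}--\ref{lmA.5} together with the separation bound $|x_j-x_\ell|\ge cr/k$. Since $\lambda\sim k^{(N-2)/(N-4)}$, this produces a bound of order $\lambda^{-(1+\epsilon)}$ for some explicit $\epsilon>0$. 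The coupling error $A_4$ is handled identically, because on the support of $\xi$ one has $W_{2,\cdot}=\kappa W_{1,\cdot}$, so $A_4$ has the same algebraic structure as $A_1$ up to the multiplicative constant $\frac{\beta}{2}\kappa^{2^*/2}$.

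For $A_2$, using that $P$ is bounded and that $W_{1,\lambda}$ is supported in a fixed compact set, we have $|A_2|\le C\lambda^{(N-2)/2}\sum_j\xi(y)(1+\lambda|y-x_j|)^{-(N-2)}$; comparison with the $**$-weight reduces the claim to
\[
(1+\lambda|y-x_j|)^{\tau-(N-6)/2}\le C\lambda^{1-\epsilon}.
\]
For $N\ge 7$ the exponent on the left is non-positive and the inequality is trivial; for $N=5,6$ the exponent is positive but strictly less than $1$, and the support condition on $\xi$ forces $1+\lambda|y-x_j|\le C\lambda$, giving the estimate for sufficiently small $\epsilon>0$ (depending only on $N$). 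For $A_3$, the decisive observation is that $\nabla\xi$ and $\Delta\xi$ are supported in the annulus $\{\delta\le|(|y'|,y'')-(r_0,y_0'')|\le 2\delta\}$, on which $|y-x_j|$ is uniformly bounded from below. Both $W_{1,\lambda}^*$ and its gradient are therefore of size $\lambda^{-(N-2)/2}$, and comparing with the $**$-weight (which is itself of size $k\lambda^{-\tau}$ on this annulus) yields a bound of order $\lambda^{\tau-(N-2)/2}$, which is $O(\lambda^{-(1+\epsilon)})$ for every $N\ge 5$.

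The main obstacle I expect is the estimate on $A_1$ (and the parallel estimate on $A_4$): although the strategy is standard, the presence of the cutoff $\xi$ interferes with the clean ``sum of bubbles'' structure, and the coupling exponent $\frac{2}{N-2}<1$ in higher dimensions (responsible for the need of Lemma~\ref{lmA.6}) requires care when one tries to absorb the interaction terms into the weighted norm. The geometric inputs needed to carry out this bookkeeping are supplied by the appendix, and combining them with the decomposition above yields $\|R_k\|_{**}\le C\lambda^{-(1+\epsilon)}$ as asserted in \eqref{RK}.
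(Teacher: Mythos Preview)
Your proposal is correct and follows essentially the same decomposition and strategy as the paper's proof (the paper labels the pieces $J_1$--$J_5$, splitting your $A_3$ into two, but the arguments for each piece are identical in spirit, relying on the Voronoi partition and Lemma~\ref{lmA.3} for the interaction term, and on the bounded support of $\xi$ for the potential and cutoff-derivative terms). Your closing concerns are slightly misplaced: since $\xi$ is a common factor one has $W_{1,\lambda}=\xi W_{1,\lambda}^*$, so $\xi^{2^*-1}$ factors out of $A_1$ and $A_4$ and causes no difficulty, and Lemma~\ref{lmA.6} is needed only for the nonlinear term $N_k$, not for $R_k$; also note that for $N=7$ the exponent $\tau-(N-6)/2=1/10$ is still positive, though your support argument covers this case as well.
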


\begin{proof}
We define
\begin{align*}
\Omega_j=\left\{y:y=(y',y'')\in\R^2\times\R^{N-2},\langle\frac{y'}{|y'|},\frac{x_j}{|x_j|}\rangle\geq \cos\frac\pi k\right\}.
\end{align*}
Recall by \eqref{N} that
\begin{small}
\begin{align}\label{2.5-0}
\begin{split}
 R_{1,K}&=
\Bigl(W_{1,\lambda}^{2^*-1}-\sum_{j=1}^kW_{1,x_j}^{2^*-1}\Bigr)-P(r,y'')W_{1,\lambda}+W^*_{1,\lambda}\Delta\xi+2\nabla\xi\nabla W_{1,\lambda}^*
+\frac{\beta}{2}\Big(W_{1,\lambda}^{\frac{2^*}{2}-1}W_{2,\lambda}^{\frac{2^*}{2}}-\sum_{j=1}^kW_{1,x_j}^{\frac{2^*}{2}-1}W_{2,x_j}^{\frac{2^*}{2}}\Big)
\\
&:=J_1+J_2+J_3+J_4+J_5.
\end{split}
\end{align}
\end{small}
By symmetry, we might as well assume that $y\in\Omega_1$, then $|y-x_j|\geq |y-x_1|$.
Therefore,
\begin{align}\label{2.5-1}
\begin{split}
|J_1|\leq &CW_{1,x_1}^{\frac{4}{N-2}}\sum\limits_{j=2}^kW_{1,x_j}+C\Bigl(\sum\limits_{j=2}^kW_{1,x_j}\Bigr)^{2^*-1}\cr
\leq &C\frac{\lambda^2}{(1+\lambda|y-x_1|)^{4}}\sum_{j=2}^k\frac{\lambda^{\frac{N-2}{2}}}{(1+\lambda|y-x_j|)^{N-2}}+C\Big(\sum_{j=2}^k\frac{\lambda^{\frac{N-2}{2}}}{(1+\lambda|y-x_j|)^{N-2}}\Big)^{2^*-1}.
\end{split}
\end{align}
By Lemma \ref{lmA.3}, taking $0<\alpha\leq\min\{4,N-2\}$, we obtain that for any $y\in \Omega_1$ and $j>1$, then
\begin{align}\label{2.5-2}
\begin{split}
\frac{1}{(1+\lambda|y-x_1|)^{4}}\sum_{j=2}^k\frac{1}{(1+\lambda|y-x_j|)^{N-2}}
\leq C\frac{1}{(1+\lambda|y-x_1|)^{N+2-\alpha}}\sum_{j=2}^k\frac{1}{|\lambda(x_j-x_1)|^{\alpha}}.
\end{split}
\end{align}
We choose $\alpha>\frac{N}{2}$ satisfying $\frac{N-2}2\leq\alpha\leq4-\tau$ to obtain that                                                               %%%%%%%%%%%%%%%%    %%% \tau µÄ·¶Î§ N=9 ʱ \leq 1/2
\begin{align*}
&\frac{1}{(1+\lambda|y-x_1|)^{4}}\sum_{j=2}^k\frac{1}{(1+\lambda |y-x_j|)^{N-2}}
\leq\frac{1}{(1+\lambda|y-x_1|)^{N-2+\tau}}\Bigl(\frac{k}{\lambda}\Bigr)^\alpha
\leq\frac1{C}\frac{1}{(1+\lambda|y-x_1|)^{\frac{N+2}{2}+\tau}}\Bigl(\frac{1}{\lambda}\Bigr)^{1+\epsilon}.
\end{align*}
Next, as in \cite{Peng-Wang-Yan}, for $y\in \Omega_1$, we can derive
\begin{align}\label{eqs2.34}
\Bigl(\sum\limits_{j=2}^k\frac{1}{1+\lambda|y-x_j|^{N-2}}\Bigr)^{2^*-1}
\leq&\sum\limits_{j=2}^k\frac{1}{1+\lambda|y-x_j|^{\frac{N+2}{2}+\tau}}\Bigl(\sum\limits_{j=2}^k\frac{1}{(1+\lambda|y-x_j|)^{\frac{N+2}{4}(\frac{N-2}{2}-\tau(\frac{N-2}{N+2}))}}\Bigr)^{\frac{4}{N-2}}\cr
\leq&\Bigl(\frac{k}{\lambda}\Bigr)^{\frac{N+2}{4}\Bigl(\frac{N-2}{2}-\tau(\frac{N-2}{N+2})\Bigr)}\sum\limits_{j=2}^k\frac{1}{(1+\lambda|y-x_j|)^{\frac{N+2}{2}+\tau}}\cr
\leq&\Bigl(\frac{1}{\lambda}\Bigr)^{1+\epsilon}\sum\limits_{j=2}^k\frac{1}{(1+\lambda|y-x_j|)^{\frac{N+2}{2}+\tau}}.
\end{align}
Hence \eqref{2.5-1}-\eqref{eqs2.34} imply that
\begin{align*}
\|J_1\|_{**}\leq \Bigl(\frac{1}{\lambda}\Bigr)^{1+\epsilon}.
\end{align*}

On the other hand, we obtain
\begin{align}\label{J2}
|J_2|\leq &C\sum\limits_{j=1}^k\frac{\lambda^{\frac{N-2}{2}}\xi}{(1+\lambda|y-x_j|)^{N-2}}
\leq C\lambda^{\frac{N+2}{2}}\sum\limits_{j=1}^k\frac{\xi}{\lambda^2(1+\lambda|y-x_j|)^{N-2}}\cr
\leq &C\Bigl(\frac{1}{\lambda}\Bigr)^{1+\epsilon}\sum\limits_{j=1}^k\frac{\lambda^{\frac{N+2}{2}}\xi}{\lambda^{1-\epsilon}(1+\lambda|y-x_j|)^{N-2}}
\leq C\Bigl(\frac{1}{\lambda}\Bigr)^{1+\epsilon}\sum\limits_{j=1}^k\frac{\lambda^{\frac{N+2}{2}}}{(1+\lambda|y-x_j|)^{\frac{N+2}{2}+\tau}},
\end{align}
since $N-1-\epsilon>\frac{N+2}{2}+\tau$ if $\epsilon>0$ is small. Hence $\|J_2\|_{**}\leq C\Bigl(\frac{1}{\lambda}\Bigr)^{1+\epsilon}$.

Similar to \eqref{J2}, we have
\begin{align}\label{J3}
|J_3|\leq& C\sum\limits_{j=1}^k\frac{\lambda^{\frac{N-2}{2}}|\Delta\xi|}{(1+\lambda|y-x_j|)^{N-2}}\leq C\lambda^{\frac{N+2}{2}}\sum\limits_{j=1}^k\frac{|\Delta\xi|}{\lambda^2(1+\lambda|y-x_j|)^{N-2}}\cr
\leq& C\Bigl(\frac{1}{\lambda}\Bigr)^{1+\epsilon}\sum\limits_{j=1}^k\frac{|\Delta \xi|\lambda^{\frac{N+2}{2}}}{\lambda^{1-\epsilon}(1+\lambda|y-x_j|)^{N-2}}
\leq C\Bigl(\frac{1}{\lambda}\Bigr)^{1+\epsilon}\sum\limits_{j=1}^k\frac{\lambda^{\frac{N+2}{2}}}{(1+\lambda|y-x_j|)^{\frac{N+2}{2}+\tau}},
\end{align}
which yields that $\|J_3\|_{**}\leq C\Bigl(\frac{1}{\lambda}\Bigr)^{1+\epsilon}$.

Moreover, we can check that
\begin{align}\label{J4}
|J_4|\leq& C\sum\limits_{j=1}^k\frac{\lambda^{\frac{N}{2}}|\nabla\xi|}{(1+\lambda|y-x_j|)^{N-1}}\leq C\lambda^{\frac{N+2}{2}}\sum\limits_{j=1}^k\frac{|\nabla\xi|}{\lambda(1+\lambda|y-x_j|)^{N-1}}\cr
\leq&\Bigl(\frac{1}{\lambda}\Bigr)^{1+\epsilon}\sum\limits_{j=1}^k\frac{|\Delta\xi|\lambda^{\frac{N+2}{2}}}{\lambda^{-\epsilon}(1+\lambda|y-x_j|)^{N-2}}\cr
\leq&C\Bigl(\frac{1}{\lambda}\Bigr)^{1+\epsilon}\sum\limits_{j=1}^k\frac{\lambda^{\frac{N+2}{2}}}{(1+\lambda|y-x_j|)^{\frac{N+2}{2}+\tau}},
\end{align}
since $\delta\leq|(r,y'')-(r_0,y_0'')|\leq2\delta$ and \eqref{weqs1.3} implies that
\begin{align*}
\frac{1}{1+\lambda|y-x_j|}\leq\frac{1}{\lambda}.
\end{align*}
Hence, we obtain
$\|J_4\|_{**}\leq C\Bigl(\frac{1}{\lambda}\Bigr)^{1+\epsilon}$.

As a result, from \eqref{2.5-1} to \eqref{J4}, we have
\begin{align*}
\|l_k\|_{**}\leq C\Bigl(\frac{1}{\lambda}\Bigr)^{1+\epsilon}
\end{align*}
\end{proof}
Now, we are ready to prove Proposition \ref{propnonlinear}.

\begin{proof}[
\textbf{Proof of Proposition \ref{propnonlinear}:}]
Recall that $\lambda=k^{\frac{N-2}{N-4}}$. Let
\begin{align*}
E=\Big\{(\varphi_{1},\varphi_{2})\in &(C(\R^N)\cap H_s)^2,\  \|(\varphi,\psi)\|_*\leq\frac1{\lambda},\
|\varphi_i|\leq\frac12 W_i,\ i=1,2,\\
&\big\langle (w_{1,x_j}^{2^*-2}Y_{j,l},w_{2,x_j}^{2^*-2}Z_{j,l}), (\varphi,\psi)\big\rangle=0,\ j=1,\ldots,k,\ l=1,2
\Big\}.
\end{align*}
Then we are sufficed to solve
\begin{align*}
(\varphi,\psi)=A(\varphi,\psi):=\mathbb L_k(N(\varphi,\psi))+\mathbb L_k(R_k)
\end{align*}
with $\mathbb L_k$ defined as in Proposition \ref{proplinear}.
We will prove that $A$ is a contraction map from $E$ to $E$.

\medskip
First, by   Proposition \ref{proplinear}, Lemma \ref{lemN},  Lemma \ref{lemR},  $A$ maps $E$ to $E$ and
\begin{align*}
 \|A(\varphi,\psi)\|_*&
 \leq C\|N(\varphi,\psi)\|_{**}+C\|R_k\|_{**}\\
 &\leq  C\|(\varphi,\psi)\|_*^{1+\delta}+\Bigl(\frac {1}{\lambda}\Bigr)^{1+\epsilon}
 \leq \frac{1}{\lambda}.
\end{align*}

\medskip
Next it is obvious
\begin{align*}
 \|A(\varphi,\psi)-A(\tilde\varphi,\tilde\psi)\|_*&
 \leq C\|N(\varphi,\psi)-N(\tilde\varphi,\tilde\psi)\|_{**}.
\end{align*}
Recall the  definition of \eqref{N}.
We first deal with $N_K^{i,2}\ (i=1,2)$
by considering $$f(t,s):=(1+t)^{\frac{2}{N-2}}(1+s)^{\frac{N}{N-2}}-1
-\frac 2{N-2} t-\frac N{N-2} s.$$
It is easy to have
\begin{align*}
&f'_t(t,s)=\frac{2}{N-2}(1+t)^{\frac{4-N}{N-2}}(1+s)^{\frac{N}{N-2}}-\frac{2}{N-2},\\&
f'_s(t,s)=\frac{N}{N-2}(1+t)^{\frac{2}{N-2}}(1+s)^{\frac{2}{N-2}}-\frac{N}{N-2}.
\end{align*}
 For the case $t+1,s+1>\frac12$, $|f'_t(t,s)|$ and $|f'_s(t,s)|=O(|s|+|t|)$, so we have
\begin{align*}
&|N^{i,2}(\varphi,\psi)-N^{i,2}(\tilde\varphi,\tilde\psi)|\\
&\leq
 CW_{1,\lambda}^{\frac{2}{N-2}}W_{2,\lambda}^{\frac{N}{N-2}}\Bigl(\Bigl|\frac{\varphi}{W_{1,\lambda}}\Bigr|+\Bigl|\frac{\psi}{W_{2,\lambda}}\Bigr|\Bigr)\Bigl(\frac{|\varphi-\tilde\varphi|}{W_{1,\lambda}}
+\frac{|\psi-\tilde\psi|}{W_{2,\lambda}}\Bigr)\\
&\leq C\Big(\sum_{j=1}^k\frac{1}{(1+|y-x_j|)^{N-2}}\Big)^{\frac{4}{N-2}}\Big(\sum_{j=1}^k\frac{1}{(1+|y-x_j|)^{\bar\sigma}}\Big)^{2}  \\&\quad \times (\|\varphi\|_*+\|\psi\|_*)(\|\varphi-\tilde\varphi\|_*
+\|\psi-\tilde\psi\|_*)
 \\
&\leq (\|\varphi\|_*+\|\psi\|_*)(\|\varphi-\tilde\varphi\|_*
+\|\psi-\tilde\psi\|_*)
 \Big(\sum_{j=1}^k\frac{1}{(1+|y-x_j|)^{\bar\sigma}}\Big)^{\frac{N+2}{N-2}}\\
&\leq\frac12(\|\varphi-\tilde\varphi\|_*+\|\psi-\tilde\psi\|_*).
\end{align*}
%where $(\xi_1,\eta_1)$ and  $(\xi_2,\eta_2)$ are elements between $(\varphi_{1},\varphi_{2})$ and $(\tilde\varphi_{1},\tilde\varphi_{2})$.
We can deal with $N^{1,1}$ and $N^{2,1}$ similarly.

\medskip
Therefore, $A$ is a contraction map and it follows from the contraction mapping theorem that there exists a unique $(\varphi,\psi)\in E,$
such that $$(\varphi,\psi)=A(\varphi,\psi).$$
Moreover, from Proposition \ref{proplinear} % and Lemma \ref{lemR}
we get
$$\|(\varphi,\psi)\|_*\leq C\Big(\frac1\lambda\Big)^{1+\epsilon}.$$
\end{proof}

\section{Proof of the main result}\label{s3}

Let \begin{align*}
F(r,\lambda)=I(W_{1,\lambda}+\varphi,W_{2,\lambda}+\psi),
\end{align*}
where $r=|x_1|$, $(\varphi_1,\varphi_2)$ is obtained in Proposition \ref{propnonlinear},
and
\begin{align*}
I(u,v):=\frac12\int_{\R^N}(|\nabla u|^2+P(r,y'')u^2+|\nabla v|^2+Q(r,y'')v^2)
-\frac1{2^*}\int_{\R^N}\left(| u|^{2^*}+|v|^{2^*}+\beta|u|^{\frac{2^*}{2}}|v|^{\frac{2^*}{2}}\right).
\end{align*}
In this section, we will choose suitable $(\bar{r},\bar{y}'',\lambda)$ so that $(W_{1,\lambda}+\varphi,W_{2,\lambda}+\psi)$ is a solution of \eqref{eqs1.1}. For this purpose, we need the following result.
\begin{lemma}
Suppose that $(r,y'',\lambda)$ satisfies
\begin{align}\label{weqs3.1}
&\int_{D_\rho}\Bigl(-\Delta u_k+P(r,y'')u_k-u_k^{2^*}-\frac{\beta}{2}u_k^{\frac{2^*}{2}-1}v_k^{\frac{2^*}{2}}\Bigr)(y,\nabla u_k)=0,\cr
&\int_{D_\rho}\Bigl(-\Delta v_k+Q(r,y'')v_k-v_k^{2^*}-\frac{\beta}{2}v_k^{\frac{2^*}{2}-1}u_k^{\frac{2^*}{2}}\Bigr)(y,\nabla v_k)=0,
\end{align}
\begin{align}\label{weqs3.2}
&\int_{D_\rho}\Bigl(-\Delta u_k+P(r,y'')u_k-u_k^{2^*}-\frac{\beta}{2}u_k^{\frac{2^*}{2}-1}v_k^{\frac{2^*}{2}}\Bigr)\frac{\partial u_k}{\partial y_i}=0,\cr
&\int_{D_\rho}\Bigl(-\Delta v_k+Q(r,y'')v_k-v_k^{2^*}-\frac{\beta}{2}v_k^{\frac{2^*}{2}-1}u_k^{\frac{2^*}{2}}\Bigr)\frac{\partial v_k}{\partial y_i}=0,
\end{align}
and
\begin{align}\label{weqs3.3}
&\int_{\R^N}\Bigl(-\Delta u_k+P(r,y'')u_k-u_k^{2^*}-\frac{\beta}{2}u_k^{\frac{2^*}{2}-1}v_k^{\frac{2^*}{2}}\Bigr)\frac{\partial u_k}{\partial \lambda}=0,\cr
&\int_{\R^N}\Bigl(-\Delta v_k+Q(r,y'')v_k-v_k^{2^*}-\frac{\beta}{2}v_k^{\frac{2^*}{2}-1}u_k^{\frac{2^*}{2}}\Bigr)\frac{\partial v_k}{\partial \lambda}=0,
\end{align}
where $u_k=W_{1,\lambda}+\varphi$, $v_k=W_{2,\lambda}+\psi$ and $D_\rho=\ds\{(r,y''):|(r,y'')-(r_0,y_0'')|\leq\rho\}$ with $\rho\in(2\delta,5\delta)$. Then $c_l=0,l=1,2,\cdots,N$.
\end{lemma}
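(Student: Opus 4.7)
The plan is to use \eqref{weqs3.1}--\eqref{weqs3.3} to produce an $N\times N$ homogeneous linear system in $(c_1,\ldots,c_N)$ and then verify that its coefficient matrix is non-degenerate as $k\to\infty$, thereby forcing every $c_l$ to vanish. Since $(u_k,v_k)=(W_{1,\lambda}+\varphi,W_{2,\lambda}+\psi)$ solves \eqref{eqnonlinear0}, the residue of the first equation is exactly $\sum_{i=1}^N c_i\sum_{j=1}^k W_{1,x_j}^{2^*-2}Y_{j,i}$, and similarly for $v_k$. Substituting into each of the three hypotheses (here I read the exponent in \eqref{weqs3.1} as $u_k^{2^*-1}$, consistently with the equation) reduces them to
\begin{align*}
\sum_{i=1}^N c_i\sum_{j=1}^k\int_{\Omega_m}\Bigl(W_{1,x_j}^{2^*-2}Y_{j,i}\,T_m(u_k)+W_{2,x_j}^{2^*-2}Z_{j,i}\,T_m(v_k)\Bigr)\,dy=0,\qquad m=1,\ldots,N,
\end{align*}
where $T_m$ ranges over the Pohozaev multipliers $(y,\nabla\cdot)$, $\partial_{y_i}\cdot$ (for $i=3,\ldots,N$), $\partial_\lambda\cdot$, and $\Omega_m\in\{D_\rho,\R^N\}$. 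This is the linear system $Mc=0$ to be analyzed.

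Next I would compute $M$ to leading order using $u_k\approx W_{1,\lambda}=\sum_j\xi U_{x_j,\lambda}$, $v_k\approx W_{2,\lambda}$, together with $\|(\varphi,\psi)\|_*\le C\lambda^{-1-\epsilon}$ from Proposition \ref{propnonlinear}. The key observation is that, restricted to a single bubble, each multiplier essentially reproduces a kernel direction: $\partial_\lambda U_{x_j,\lambda}$ is (up to an overall factor) $Y_{j,2}$, $\partial_{\bar y_k''}U_{x_j,\lambda}=Y_{j,k}$ for $k\ge 3$, while
\begin{align*}
(y,\nabla U_{x_j,\lambda})=(x_j,\nabla U_{x_j,\lambda})+(y-x_j,\nabla U_{x_j,\lambda})
\end{align*}
mixes $Y_{j,1}$ (from the radial translation $(x_j,\nabla)$) with $Y_{j,2}$ (from the bubble dilation $(y-x_j,\nabla)$). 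Coupled with the orthogonality relation already established inside the proof of Lemma \ref{lemlinear},
\begin{align*}
\sum_{j=1}^k\bigl\langle(W_{1,x_j}^{2^*-2}Y_{j,i},W_{2,x_j}^{2^*-2}Z_{j,i}),(Y_{1,l},Z_{1,l})\bigr\rangle=\lambda^{2n_l}(\bar c+o(1))\delta_{il},
\end{align*}
one concludes that, after a triangular change of basis disentangling $Y_{j,1}$ and $Y_{j,2}$ inside the Pohozaev multiplier, the matrix $M$ is block-diagonal at leading order with diagonal entries of size $k\lambda^{2n_l}$ that are strictly bounded away from zero.

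The final step is to verify that all remaining contributions are truly lower order. The $\varphi,\psi$ corrections contribute an extra factor of $\|(\varphi,\psi)\|_*=O(\lambda^{-1-\epsilon})$; the bubble-interaction errors $W_{i,\lambda}-\sum_j\xi U_{x_j,\lambda}$ are handled by the weighted estimates in Section \ref{s2} (in particular Lemma \ref{lmA.3}); and the boundary terms produced by integration by parts on $D_\rho$ for \eqref{weqs3.1} and \eqref{weqs3.2} are negligible because, for $\rho\in(2\delta,5\delta)$, the concentration points $x_j$ lie deep inside $D_\rho$ and the bubbles decay by a factor $\lambda^{-(N-2)}$ there, which is far smaller than the diagonal scale $\lambda^{2n_l}$. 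Invertibility of $M$ then yields $c_l=0$ for every $l$. I expect the main obstacle to be in Step 2: the Pohozaev multiplier $(y,\nabla\cdot)$ does not single out a unique kernel direction, so the $2\times 2$ sub-block of $M$ coupling $Y_{j,1}$ and $Y_{j,2}$ (jointly produced by \eqref{weqs3.1} and \eqref{weqs3.3}) must be checked to be non-degenerate; keeping the boundary contributions on $\partial D_\rho$ subleading despite the slow bubble decay in the borderline case $N=5$ is the other delicate point.
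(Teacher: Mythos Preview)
Your approach is essentially the same as the paper's: both substitute \eqref{eqnonlinear0} into \eqref{weqs3.1}--\eqref{weqs3.3} to obtain an $N\times N$ homogeneous system in $(c_1,\dots,c_N)$, then show the coefficient matrix is non-degenerate because the multipliers $(y,\nabla\cdot)$, $\partial_{y_i}$, $\partial_\lambda$ reproduce, to leading order, the kernel directions $Y_{j,l}$ with the known orthogonality from Lemma~\ref{lemlinear}. The paper carries this out as a triangular elimination (first bounding all but one $c_l$ by the remaining one, then killing the last from the $\partial_\lambda$ identity), whereas you phrase it as direct invertibility of $M$; these are equivalent.

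One simplification you are missing: your concern about controlling boundary terms on $\partial D_\rho$ is unnecessary. Because the cutoff $\xi$ vanishes for $|(r,y'')-(r_0,y_0'')|\ge 2\delta$ and $\rho>2\delta$, the residue $\sum_i c_i\sum_j W_{1,x_j}^{2^*-2}Y_{j,i}$ is compactly supported in the interior of $D_\rho$, so $\int_{D_\rho}=\int_{\R^N}$ for these terms automatically and no boundary contribution appears. This is exactly how the paper begins its argument, and it removes the ``borderline $N=5$'' worry you flag at the end.
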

\begin{proof}
Since $(W_{1,\lambda},W_{2,\lambda})=(0,0)$ in $\R^N \backslash D_\rho$, we see that if \eqref{weqs3.1}-\eqref{weqs3.3} hold, then
\begin{align}\label{weqs3.4}
\sum\limits_{i=1}^Nc_i\sum\limits_{j=1}^k(W_{1,x_j}^{2^*-2}Y_{j,i}u,W_{2,x_j}^{2^*-2}Z_{j,i}v)=(0,0),
\end{align}
for $(u,v)=\Bigl(\langle y,\nabla u_k\rangle,\langle y,\nabla v_k\rangle\Bigr)$, $(u,v)=\Bigl(\frac{\partial u_k}{\partial y_i},\frac{\partial v_k}{\partial y_i}\Bigr)$ and $(u,v)=\Bigl(\frac{\partial W_{1,\lambda}}{\partial \lambda},\frac{\partial W_{2,\lambda}}{\partial\lambda}\Bigr).$

For any function $f,\varphi\in H^1(\R^N)$, we have
\begin{align}\label{weqs3.5}
\int_{\R^N}f \frac{\partial\varphi}{\partial y_i}=-\int_{\R^N}\varphi\frac{\partial f}{\partial y_i}.
\end{align}
Using \eqref{weqs3.5}, we can prove that from \eqref{weqs3.4} that
\begin{align}\label{weqs3.6}
\sum\limits_{i=1}^Nc_i\sum\limits_{j=1}^k(W_{1,x_j}^{2^*-2}Y_{j,i}u,W_{2,x_j}^{2^*-2}Z_{j,i}v)(1+o(1))=(0,0)
\end{align}
holds for $(u,v)=\Bigl(\langle y,\nabla u_k\rangle,\langle y,\nabla v_k\rangle\Bigr)$, $(u,v)=\Bigl(\frac{\partial u_k}{\partial y_i},\frac{\partial v_k}{\partial y_i}\Bigr)$ and $(u,v)=\Bigl(\frac{\partial W_{1,\lambda}}{\partial \lambda},\frac{\partial W_{2,\lambda}}{\partial\lambda}\Bigr)$.

We have
\begin{align*}
\langle y, \nabla W_{i,\lambda}\rangle=\langle y',\nabla_{y'}W_{i,\lambda}\rangle+\langle y'',\nabla_{y''}W_{i,\lambda}\rangle,\,\,\,i=1,2.
\end{align*}
So we have
\begin{align}\label{weqs3.7}
&\sum\limits_{l=1}^Nc_l\sum\limits_{j=1}^k\int_{\R^N}(W_{1,x_j}^{2^*-2}Y_{j,l}\langle y, \nabla W_{1,\lambda}\rangle, W_{2,x_j}^{2^*-2}Z_{j,l}\langle y, \nabla W_{2,\lambda}\rangle)\cr
=&\sum\limits_{j=1}^k\int_{\R^N}(W_{1,x_j}^{2^*-2}Y_{j,2}\langle {y'}, \nabla_{y'} W_{1,\lambda}\rangle, W_{2,x_j}^{2^*-2}Z_{j,2}\langle y, \nabla W_{2,\lambda}\rangle)c_2+o(1)\sum\limits_{t\neq 2}|c_t|
\end{align}
and
\begin{align}\label{weqs3.8}
&\sum\limits_{l=1}^N\sum\limits_{j=1}^k\Bigl(W_{1,x_j}^{2^*-2}Y_{j,l}\frac{\partial W_{1,\lambda}}{\partial y_i},W_{2,x_j}^{2^*-2}Z_{j,l}\frac{\partial W_{2,\lambda}}{\partial y_i}\Bigr)\cr
=&\sum\limits_{j=1}^k\int_{\R^N}\Bigl(W_{1,x_j}^{2^*-2}Y_{j,l}\frac{\partial W_{1,\lambda}}{\partial y_i},W_{2,x_j}^{2^*-2}Z_{j,2}\Bigr)c_i+o(1)\sum\limits_{t\neq i}^{i=3,\cdots,N}|c_t|.
\end{align}
Since
\begin{align*}
\sum\limits_{j=1}^k\int_{\R^N}\Bigl(W_{1,x_j}Y_{j,2}\langle y',\nabla_{y'} W_{1,\lambda}\rangle,W_{2,x_j}Z_{j,2}\langle y',\nabla_{y'} W_{2,\lambda}\rangle\Bigr)=k(a_1+o(1))\lambda^2
\end{align*}
and
\begin{align*}
\sum\limits_{j=1}^k\int_{\R^N}\Bigl(W_{1,x_j}^{2^*-2}Y_{j,i}\frac{\partial W_{1,\lambda}}{\partial y_i},W_{2,x_j}^{2^*-2}Z_{j,i}\frac{\partial W_{2,\lambda}}{\partial y_i}\Bigr)=k(a_2+o(1))\lambda^2
\end{align*}
for some constants $a_1\neq0$ and $a_2\neq 0$, we find from \eqref{weqs3.7}-\eqref{weqs3.8}
\begin{align*}
c_i=o\Bigl(\frac{1}{\lambda^2}\Bigr)c_1,\,\,\,i=1,2,\cdots,N.
\end{align*}

Now, we have
\begin{align*}
0=\sum\limits_{l=1}^Nc_l\sum\limits_{j=1}^k\int_{\R^N}W_{1,x_j}^{2^*-2}Y_{j,l}\frac{\partial W_{1,\lambda}}{\partial\lambda}
=&\sum\limits_{j=1}^k\int_{\R^N}W_{1,x_j}^{2^*-2}Y_{j,1}\frac{\partial W_{1,\lambda}}{\partial\lambda}c_1+O\Bigl(\frac{k}{\lambda^2}c_1\Bigr)\cr
=&k(a_3+o(1))c_1+O\Bigl(\frac{k}{\lambda^2}\Bigr)c_1
\end{align*}
for some constant $a_3\neq0$. So $c_1=0$.
\end{proof}

\begin{lemma}
We have
\begin{align}\label{wqf3.1}
&\int_{\R^N}\Bigl(-\Delta u_k+P(r,y'')u_k-u_k^{2^*-1}-\frac{\beta}{2}u_k^{\frac{2^*}{2}-1}v_k^{\frac{2^*}{2}}\Bigr)\frac{\partial W_{1,\lambda}}{\partial \lambda}\cr
&+\int_{\R^N}\Bigl(-\Delta v_k+Q(r,y'')v_k-v_k^{2^*-1}-\frac{\beta}{2}v_k^{\frac{2^*}{2}-1}u_k^{\frac{2^*}{2}}\Bigr)\frac{\partial W_{2,\lambda}}{\partial \lambda}\cr
=&k\Bigl[-\frac{B_1}{\lambda^3}P(r,y'')-\frac{B_2}{\lambda^3}Q(r,y'')+\sum\limits_{j=1}^k\frac{(C_1+\beta C_2)(N-2)}{\lambda^{N-1}|x_1-x_j|^{N-2}}+O\Bigl(\frac{1}{\lambda^{3+\epsilon}}\Bigr)\Bigr],
\end{align}
where $B_1=\ds\int_{\R^N}U_{0,1}^2,\,B_2=\int_{\R^N}V_{0,1}^2$ and $C_1+\beta C_2>0$ are integer.
\end{lemma}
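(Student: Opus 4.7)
Denote the sum on the left of \eqref{wqf3.1} by $\mathcal{I}$. With $u_{k}=W_{1,\lambda}+\varphi$ and $v_{k}=W_{2,\lambda}+\psi$, Taylor-expand the nonlinearities around $(W_{1,\lambda},W_{2,\lambda})$ to split
$$\mathcal{I}=\mathcal{I}_{0}+\mathcal{I}_{L}+\mathcal{I}_{N},$$
where $\mathcal{I}_{0}$ is the value at $(\varphi,\psi)=(0,0)$, $\mathcal{I}_{L}$ is linear in $(\varphi,\psi)$ through the operator $\mathcal{L}_{k}$ of \eqref{L}, and $\mathcal{I}_{N}$ collects the higher-order remainder $N_{k}(\varphi,\psi)$ of \eqref{N}. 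The strategy is to reduce everything to $\mathcal{I}_{0}$ and then expand it explicitly.

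For the correction terms, integration by parts rewrites $\mathcal{I}_{L}$ as $\langle(\varphi,\psi),\mathcal{L}_{k}(\partial_\lambda W_{1,\lambda},\partial_\lambda W_{2,\lambda})\rangle$. Since $\partial_\lambda W_{1,\lambda}=\sum_{j}Y_{j,2}$ and $\partial_\lambda W_{2,\lambda}=\sum_{j}Z_{j,2}$ lie essentially in the kernel of $\mathcal{L}_{k}$ (as already exploited in \eqref{2-5}), combining this with $\|(\varphi,\psi)\|_{*}\leq C\lambda^{-1-\epsilon}$ from Proposition \ref{propnonlinear} yields $\mathcal{I}_{L}=O(k\lambda^{-3-\epsilon})$. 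The term $\mathcal{I}_{N}$ satisfies the same bound by Lemma \ref{lemN} paired with the $*$-norm control of $\partial_\lambda W_{i,\lambda}$. Hence only $\mathcal{I}_{0}$ needs to be computed.

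Since each $(U_{x_{j},\lambda},V_{x_{j},\lambda})$ solves \eqref{weqs1.2} and the cutoff commutator $[-\Delta,\xi]$ is concentrated in the annulus where \eqref{weqs1.3} holds, the $\xi$-commutator pieces enter $\mathcal{I}_{0}$ exactly as the terms $J_{3},J_{4}$ of Lemma \ref{lemR} and contribute only $o(k\lambda^{-3})$. Up to such negligible pieces,
$$\mathcal{I}_{0}=\int_{\R^N}P(r,y'')W_{1,\lambda}\partial_\lambda W_{1,\lambda}\,dy+\int_{\R^N}Q(r,y'')W_{2,\lambda}\partial_\lambda W_{2,\lambda}\,dy+\mathcal{I}_{\mathrm{int}},$$
where $\mathcal{I}_{\mathrm{int}}$ collects the self-interaction and $\beta$-coupled interaction pairings from both equations. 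Using $P(y)=P(r,y'')+O(|y-x_{j}|)$ near each $x_{j}$ and the scaling identity $\int U_{0,\lambda}^{2}=\lambda^{-2}B_{1}$, one gets $\int U_{x_{j},\lambda}\partial_\lambda U_{x_{j},\lambda}\,dy=-B_{1}/\lambda^{3}$; summing the $k$ self-terms and discarding the cross-center pieces (negligible by Lemma \ref{lmA.3}) delivers $-kB_{1}P(r,y'')/\lambda^{3}$ and, identically, $-kB_{2}Q(r,y'')/\lambda^{3}$.

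For $\mathcal{I}_{\mathrm{int}}$ expand near $x_{1}$,
$$W_{1,\lambda}^{2^{*}-1}-\textstyle\sum_{j}U_{x_{j},\lambda}^{2^{*}-1}=(2^{*}-1)\,U_{x_{1},\lambda}^{2^{*}-2}\textstyle\sum_{j\neq 1}U_{x_{j},\lambda}+(\text{higher order}),$$
insert the Green-type asymptotic $U_{x_{j},\lambda}(x_{1})=c_{N}\lambda^{-(N-2)/2}|x_{1}-x_{j}|^{-(N-2)}(1+o(1))$ (valid since $\lambda|x_{1}-x_{j}|\to\infty$), and combine with the identity $(2^{*}-1)\int U_{x_{1},\lambda}^{2^{*}-2}\partial_\lambda U_{x_{1},\lambda}\,dy=-(N-2)\widetilde C_{1}/\lambda$, obtained by differentiating $\int U_{0,\lambda}^{2^{*}}$ in $\lambda$ and using the bubble equation $-\Delta U=U^{2^{*}-1}+\tfrac{\beta}{2}U^{(2^{*}/2)-1}V^{2^{*}/2}$. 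This extracts the factor $(N-2)C_{1}\sum_{j=2}^{k}(\lambda^{N-1}|x_{1}-x_{j}|^{N-2})^{-1}$; the rotational symmetry of the configuration multiplies by $k$. The $\beta$-coupled part is treated identically and contributes $\beta(N-2)C_{2}$ times the same sum; the synchronized relation $V=\kappa U$ together with (iii) ensures $C_{1}+\beta C_{2}>0$. Assembling all pieces yields \eqref{wqf3.1}. The main technical obstacle is this last step: one must sort through many cubic cross terms of type $U_{x_{i}}^{2^{*}-2}U_{x_{j}}\partial_\lambda U_{x_{\ell}}$, isolating precisely the slices that generate the Green-function sum while absorbing the remainder into $o(k\lambda^{-3-\epsilon})$; this is carried out by restricting to the symmetric sectors $\Omega_{j}$ of Lemma \ref{lemR} and invoking the weighted estimates of Lemmas \ref{lmA.3}--\ref{lmA.5}.
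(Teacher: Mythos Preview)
Your decomposition $\mathcal{I}=\mathcal{I}_{0}+\mathcal{I}_{L}+\mathcal{I}_{N}$ is exactly the paper's split into $I_{1}+I_{2}+(I_{3}+I_{4})$; the treatment of $\mathcal{I}_{L}$ via the near-kernel property \eqref{2-5} and the expansion of $\mathcal{I}_{0}$ are the same as the paper's (the paper simply packages the latter as Lemma~\ref{lmA.1}).

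There is one small but real gap in how you bound $\mathcal{I}_{N}$. Invoking Lemma~\ref{lemN} gives only $\|N_{k}(\varphi,\psi)\|_{**}\le C\|(\varphi,\psi)\|_{*}^{1+\delta}$ with $0<\delta<\tfrac{4}{N-2}$, and pairing with $\partial_{\lambda}W$ then yields $\mathcal{I}_{N}=O\bigl(k\,\lambda^{-1-(1+\epsilon)(1+\delta)}\bigr)$. To reach the required $O(k\lambda^{-3-\epsilon})$ you would need $\delta(1+\epsilon)\ge 1$, which forces $\epsilon>(N-6)/4$; for large $N$ this is incompatible with the ``$\epsilon>0$ small'' used throughout. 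The paper avoids this by \emph{not} going through Lemma~\ref{lemN}: it estimates $I_{3},I_{4}$ directly, using the pointwise control $|\varphi|\le \tfrac12 W_{1,\lambda}$, $|\psi|\le\tfrac12 W_{2,\lambda}$ from Proposition~\ref{propnonlinear} to obtain the genuine second-order Taylor remainder $O\bigl(W_{i,\lambda}^{2^{*}-3}\varphi^{2}\bigr)$ for every $N\ge5$. This produces $\|(\varphi,\psi)\|_{*}^{2}\sim\lambda^{-2-2\epsilon}$ rather than $\|(\varphi,\psi)\|_{*}^{1+\delta}$, and the bound $I_{3}+I_{4}=O(k\lambda^{-3-\epsilon})$ follows for any small $\epsilon>0$. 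The fix to your argument is immediate---replace the appeal to Lemma~\ref{lemN} by this direct quadratic estimate---but as written the step does not close for $N\ge7$.
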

\begin{proof}
We have
\begin{align*}
\hbox{left the hand side of}\,\eqref{wqf3.1}=&\int_{\R^N}\Bigl(-\Delta W_{1,\lambda}+P(r,y'')W_{1,\lambda}-W_{1,\lambda}^{2^*-1}-\frac{\beta}{2}W_{1,\lambda}^{\frac{2^*}{2}-1}W_{2,\lambda}^{\frac{2^*}{2}}\Bigr)\frac{\partial W_{1,\lambda}}{\partial \lambda}\,\,\,\,\,\,\,\,\,\,\,\,\,\,\,\,\,\,\,\,\,\,\,\,\,\,\,\,\,\,\,\,\,\,\,\,\,\,\,\,\,\,\,\,\,\,\,\,\,\,\,\,\,\,\,\,\,\,\,\,\,\,\,\,\,\,\,\cr
&+\int_{\R^N}\Bigl(-\Delta W_{2,\lambda}+Q(r,y'')W_{2,\lambda}-W_{2,\lambda}^{2^*-1}-\frac{\beta}{2}W_{1,\lambda}^{\frac{2^*}{2}}W_{2,\lambda}^{\frac{2^*}{2}-1}\Bigr)\frac{\partial W_{2,\lambda}}{\partial \lambda}\cr
&+\int_{\R^N}\Bigl[\Bigl(-\Delta\varphi+P(r,y'')\varphi-(2^*-1)W_{1,\lambda}^{2^*-2}\varphi\Bigr)\frac{\partial W_{1,\lambda}}{\partial\lambda}\cr
&+\Bigl(-\Delta\psi+Q(r,y'')\psi-(2^*-1)W_{2,\lambda}^{2^*-2}\psi\Bigr)\frac{\partial W_{2,\lambda}}{\partial\lambda}\Bigr]\cr
&-\int_{\R^N}\Bigl[\Bigl((W_{1,\lambda}+\varphi)^{2^*-1}-W_{1,\lambda}^{2^*-1}-(2^*-1)W_{1,\lambda}^{2^*-2}\varphi\Bigr)\frac{\partial W_{1,\lambda}}{\partial\lambda}\cr
&-\Bigl((W_{2,\lambda}+\psi)^{2^*-1}-W_{2,\lambda}^{2^*-1}-(2^*-1)W_{2,\lambda}^{2^*-2}\psi\Bigr)\frac{\partial W_{2,\lambda}}{\partial\lambda}\Bigr]\cr
&+\frac{\beta}{2}\int_{\R^N}\Bigl[\Bigl((W_{1,\lambda}+\varphi)^{\frac{2^*}{2}-1}(W_{2,\lambda}+\psi)^{\frac{2^*}{2}}-W_{1,\lambda}^{\frac{2^*}{2}-1}W_{2,\lambda}^{\frac{2^*}{2}}\Bigr)\frac{\partial W_{1,\lambda}}{\partial\lambda}\cr
&+\Bigl((W_{1,\lambda}+\varphi)^{\frac{2^*}{2}}(W_{2,\lambda}+\psi)^{\frac{2^*}{2}-1}-W_{1,\lambda}^{\frac{2^*}{2}}W_{2,\lambda}^{\frac{2^*}{2}-1}\Bigr)\frac{\partial W_{2,\lambda}}{\partial\lambda}\Bigr]\cr
=&:I_1+I_2+I_3+I_4.
\end{align*}
By Lemma \ref{lmA.1}, we obtain
\begin{align}\label{weqs3.11}
I_1=k\Bigl(-\frac{B_1}{\lambda^3}P(r,y'')-\frac{B_2}{\lambda^3}Q(r,y'')+\sum\limits_{j=2}^k\frac{B_2}{\lambda^{N-1}|x_1-x_j|^{N-2}}\Bigr)+O\Bigl(\frac{1}{\lambda^{3+\lambda}}\Bigr).
\end{align}
Using \eqref{eqs2.10} and \eqref{eqs2.11}, we have
\begin{align}\begin{split}\label{weqs3.12}
I_2&=\int_{\R^N}\Bigl(-\Delta\varphi+P(r,y'')\varphi-(2^*-1)W_{1,\lambda}^{2^*-2}\varphi\Bigr)\frac{\partial W_{1,\lambda}}{\partial\lambda}\\
&\quad +\Bigl(-\Delta\psi+Q(r,y'')\psi-(2^*-1)W_{2,\lambda}^{2^*-2}\psi\Bigr)\frac{\partial W_{2,\lambda}}{\partial\lambda}\\
&=O\Bigl(\frac{\|(\varphi,\psi)\|_{*}}{\lambda^{2+\epsilon}}\Bigr)=O\Bigl(\frac{1}{\lambda^{3+\epsilon}}\Bigr).
\end{split}\end{align}
Supposing that $N\geq6$ and denoting $I_3=I_{31}+I_{32}$, then we have
\begin{align}\label{weqs3.13}
|I_{31}|
=&\Bigl|\int_{\R^N}\Bigl((W_{1,\lambda}+\varphi)^{2^*-1}-W_{1,\lambda}^{2^*-1}-(2^*-1)W_{1,\lambda}^{2^*-2}\varphi\Bigr)\frac{\partial W_{1,\lambda}}{\partial\lambda}\Bigr|\cr
\leq &C\int_{\R^N}W_{1,\lambda}^{2^*-3}\varphi^2\Bigl|\frac{\partial W_{1,\lambda}}{\partial\lambda}\Bigr|
\leq \frac{C}{\lambda}\int_{\R^N}\Bigl(\sum\limits_{j=1}^kW_{1,x_j}\Bigr)^{2^*-2}\varphi^2\cr
\leq&\frac{\|\varphi\|_{*}^2}{\lambda}\int_{\R^N}\Bigl(\sum\limits_{j=1}^kW_{1,x_j}\Bigr)^{2^*-2}\Bigl(\sum\limits_{i=1}^k\frac{\lambda^{\frac{N-2}{2}}}{1+\lambda|y-x_i|^{\frac{N-2}{2}+\tau}}\Bigr)^2\cr
=&O\Bigl(\frac{k}{\lambda^{3+\epsilon}}\Bigr).
\end{align}
Similarly, for $N=5$, we have
\begin{align}\label{weqs3.14}
|I_{31}|\leq \int_{\R^N}\Bigl(W_{1,\lambda}^{2^*-3}\varphi^2\Bigl|\frac{\partial W_{1,\lambda}}{\partial\lambda}\Bigr|+|\varphi|^{2^*}\frac{\partial W_{1,\lambda}}{\partial\lambda}\Bigr)=O\Bigl(\frac{k}{\lambda^{3+\epsilon}}\Bigr).
\end{align}
Similarly, we have
\begin{align}\label{weqs3.15}
|I_{32}|
=\int_{\R^N}\Bigl((W_{2,\lambda}+\psi)^{2^*-1}-W_{2,\lambda}^{2^*-1}-(2^*-1)W_{2,\lambda}^{2^*-2}\psi\Bigr)\frac{\partial W_{2,\lambda}}{\partial\lambda}
=O\Bigl(\frac{k}{\lambda^{3+\epsilon}}\Bigr),
\end{align}
then $|I_3|=|I_{31}+I_{32}|=O\Bigl(\frac{k}{\lambda^{3+\epsilon}}\Bigr)$.

On the other hand, we have
\begin{align}\label{weqs3.16}
I_{41}=&\int_{\R^N}\Bigl((W_{1,\lambda}+\varphi)^{\frac{2^*}{2}-1}(W_{2,\lambda}+\psi)^{\frac{2^*}{2}}-W_{1,\lambda}^{\frac{2^*}{2}}W_{2,\lambda}^{\frac{2^*}{2}}\Bigr)\frac{\partial W_{1,\lambda}}{\partial\lambda}\cr
\leq &C\int_{\R^N}\Bigl(W_{1,\lambda}^{2^*-2}\varphi+W_{2,\lambda}^{2^*-3}\varphi\psi+W_{1,\lambda}^{2^*-2}\psi\Bigr)\frac{\partial W_{1,\lambda}}{\partial\lambda}\cr
\leq&\frac{1}{\lambda}\int_{\R^N}\Bigl(\sum\limits_{j=1}^kW_{1,x_j}\Bigr)^{2^*-1}\varphi=O\Bigl(\frac{1}{\lambda^{3+\epsilon}}\Bigr).
\end{align}
Similarly, we can also estimate
\begin{align}\label{weqs3.17}
I_{42}=\int_{\R^N}\Bigl((W_{1,\lambda}+\varphi)^{\frac{2^*}{2}}(W_{2,\lambda}+\psi)^{\frac{2^*}{2}-1}-W_{1,\lambda}^{\frac{2^*}{2}}W_{2,\lambda}^{\frac{2^*}{2}-1}\Bigr)\frac{\partial W_{2,\lambda}}{\partial\lambda}=O\Bigl(\frac{1}{\lambda^{3+\epsilon}}\Bigr).
\end{align}
Combining \eqref{weqs3.11}-\eqref{weqs3.17}, we complete the proof.
\end{proof}
Next, we estimate \eqref{weqs3.1} and \eqref{weqs3.2}. Let us point out that the left hand side of \eqref{weqs3.2} is the local Pohozaev identities generating from translation.  We integrate by parts to find that \eqref{weqs3.1} is equivalent to
\begin{align}\label{eqs3.16}
\int_{D_\rho}\frac{\partial P(r,y'')}{\partial y_i}u_k^2=O\Bigl(\int_{\partial D_\rho}|\nabla\varphi|^2+\varphi^2+|\varphi|^{2^*}\Bigr),\,\,\,
\int_{D_\rho}\frac{\partial Q(r,y'')}{\partial y_i}v_k^2=O\Bigl(\int_{\partial D_\rho}|\nabla\psi|^2+\psi^2+|\psi|^{2^*}\Bigr),
\end{align}
since $(u_k,v_k)=(\varphi,\psi)$ on $\partial D_\rho$.

On the other hand, we know that \eqref{weqs3.1} is equivalent to
\begin{align}\label{eqs3.19}
&\frac{N-2}{2}\int_{\Omega}\Bigl(|\nabla u|^2+|\nabla v|^2\Bigr)-\frac{1}{2}\int_{\Omega}(NP(y)+\langle y,\nabla P\rangle)u^2+\frac{N}{2^*}\int_{\Omega}\Bigl(u^{2^*}+v^{2^*}+\beta u^{\frac{2^*}{2}}v^{\frac{2^*}{2}}\Bigr)\cr
&-\frac{1}{2}\int_{\Omega}(NQ(y)+\langle y,\nabla Q\rangle)v^2\cr
=&O\Bigl(\int_{\partial\Omega}|\nabla\varphi|^2+\varphi^2+|\varphi|^{2^*}+|\nabla\psi|^2+\psi^2+|\psi|^{2^*}\Bigr).
\end{align}
From \eqref{eqnonlinear0}, we obtain
\begin{align}\label{eqs3.21}
\int_{D_\rho}|\nabla u_k|^2+P(y)u_k^2=&\int_{D_\rho}u_k^{2^*}+\frac{\beta}{2}\int_{D_\rho}u_k^{\frac{2^*}{2}}v_k^{\frac{2^*}{2}}+\sum\limits_{l=1}^Nc_l\sum\limits_{j=1}^k\int_{\R^N}W_{1,x_j}^{2^*-2}Y_{j,l}u_k\cr
&+O\Bigl(\int_{\partial D_\rho}|\nabla \varphi|^2+\varphi^2\Bigr)
\end{align}
and
\begin{align}\label{eqs3.22}
\int_{D_\rho}|\nabla v_k|^2+Q(y)v_k^2=&\int_{D_\rho}v_k^{2^*}+\frac{\beta}{2}\int_{D_\rho}u_k^{\frac{2^*}{2}}v_k^{\frac{2^*}{2}}+\sum\limits_{l=1}^Nc_l\sum\limits_{j=1}^k\int_{\R^N}W_{2,x_j}^{2^*-2}Z_{j,l}u_k\cr
&+O\Bigl(\int_{\partial D_\rho}|\nabla \psi|^2+\psi^2\Bigr).
\end{align}
Using \eqref{eqs3.19}-\eqref{eqs3.22}, we can get the following estimate for $c_i$,
\begin{align}\label{eqs3.23}
c_i=O\Bigl(\frac{1}{\lambda^{3+\epsilon}}\Bigr),\,\,\,i=2,\cdots,N,\,\,\,c_1=o\Bigl(\frac{1}{\lambda}\Bigr).
\end{align}
Noting
\begin{align}\label{eqs3.24}
\int_{\R^N}W_{1,x_j}^{2^*-1}Y_{j,l}=O\Bigl(\frac{1}{\lambda^N}\Bigr),\,\,\,\int_{\R^N}W_{2,x_j}Z_{j,l}=O\Bigl(\frac{1}{\lambda^N}\Bigr),
\end{align}
we find from
\eqref{eqs3.23} and \eqref{eqs3.24} that
\begin{align}\label{eqs3.25}
&\int_{D_\rho}P(y)+\frac{1}{2}\langle y,\nabla P(y)\rangle u_k^2+Q(y)+\frac{1}{2}\langle y,\nabla Q(y)\rangle v_k^2\cr
=&O\Bigl(\frac{k}{\lambda^{2+\epsilon}}\Bigr)+O\Bigl(\int_{\partial D_\rho}|\nabla \varphi|^2+\varphi^2+|\varphi|^{2^*}+|\nabla \psi|^2+\psi^2+|\psi|^{2^*}\Bigr)
\end{align}
for some small $\epsilon>0$.

From \eqref{eqs3.16}, we can rewrite \eqref{eqs3.25},
\begin{align*}
&\int_{D_\rho}\Bigl(P(y)+\frac{1}{2}r\frac{\partial P(r,y'')}{\partial r}\Bigr)u_k^2+\Bigl(Q(y)+\frac{1}{2}r\frac{\partial Q(r,y'')}{\partial r}\Bigr)v_k^2\cr
=&O\Bigl(\frac{k}{\lambda^{2+\epsilon}}\Bigr)+O\Bigl(\int_{\partial D_\rho}|\nabla \varphi|^2+\varphi^2+|\varphi|^{2^*}+|\nabla \psi|^2+\psi^2+|\psi|^{2^*}\Bigr).
\end{align*}
That is
\begin{align*}
\int_{D_\rho}\frac{1}{2r}\Bigl(\frac{\partial r^2P(r,y'')}{\partial r}u_k^2+\frac{\partial r^2Q(r,y'')}{\partial r}v_k^2\Bigr)=O\Bigl(\frac{k}{\lambda^2}+\int_{\partial D_\rho}|\nabla \varphi|^2+\varphi^2+|\varphi|^{2^*}+|\nabla \psi|^2+\psi^2+|\psi|^{2^*}\Bigr).
\end{align*}
\begin{lemma}\label{lm3.3}
It holds
\begin{align*}
\int_{\R^N}|\nabla\varphi|^2+P(r,y'')|\varphi|^2+|\nabla\psi|^2+Q(r,y'')\psi^2=O\Bigl(\frac{k}{\lambda^{2+\epsilon}}\Bigr).
\end{align*}
\end{lemma}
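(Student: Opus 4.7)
The plan is to test the reduced nonlinear problem \eqref{eqnonlinear0} against $(\varphi,\psi)$ itself, use the orthogonality constraints to kill the Lagrange multipliers, and then bound every resulting piece via the weighted-norm machinery of Proposition \ref{propnonlinear}, Lemma \ref{lemN}, and Lemma \ref{lemR}.

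Concretely, I would multiply the first equation of \eqref{eqnonlinear} by $\varphi$, the second by $\psi$, integrate over $\R^N$, and sum. Using the expression of $L_k$ in \eqref{L} and integrating by parts once, one arrives at the identity
\begin{align*}
&\int_{\R^N}\bigl(|\nabla\varphi|^2+P\varphi^2+|\nabla\psi|^2+Q\psi^2\bigr)\\
&\quad=(2^*-1)\int_{\R^N}\bigl(W_{1,\lambda}^{\frac{4}{N-2}}\varphi^2+W_{2,\lambda}^{\frac{4}{N-2}}\psi^2\bigr)+\frac{\beta}{N-2}\int_{\R^N}\bigl(W_{1,\lambda}^{\frac{4-N}{N-2}}W_{2,\lambda}^{\frac{N}{N-2}}\varphi^2+W_{2,\lambda}^{\frac{4-N}{N-2}}W_{1,\lambda}^{\frac{N}{N-2}}\psi^2\bigr)\\
&\qquad+\frac{N\beta}{N-2}\int_{\R^N}W_{1,\lambda}^{\frac{2}{N-2}}W_{2,\lambda}^{\frac{2}{N-2}}\varphi\psi+\langle R_k,(\varphi,\psi)\rangle+\langle N_k(\varphi,\psi),(\varphi,\psi)\rangle,
\end{align*}
where the multiplier contribution $\sum_l c_l\sum_j\langle(W_{1,x_j}^{2^*-2}Y_{j,l},W_{2,x_j}^{2^*-2}Z_{j,l}),(\varphi,\psi)\rangle$ vanishes thanks to the orthogonality conditions imposed in \eqref{eqnonlinear0}.

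It then suffices to show that each term on the right-hand side is $O(k\lambda^{-2-\epsilon})$. For the cubic integrals, the relation $W_{2,\lambda}=\kappa W_{1,\lambda}$ reduces each one to a positive multiple of an integral of the form $\int W_{1,\lambda}^{4/(N-2)}\xi^2$ with $\xi\in\{\varphi,\psi\}$ or $\int W_{1,\lambda}^{4/(N-2)}\varphi\psi$. Partitioning $\R^N=\bigcup_j\Omega_j$ so that $W_{1,\lambda}\simeq W_{1,x_j}$ on $\Omega_j$, and applying the weighted convolution bounds of Lemmas \ref{lmA.3}, \ref{lmA.4}, \ref{lmA.5} together with the definition of $\|\cdot\|_*$, I obtain $\int_{\R^N}W_{1,\lambda}^{4/(N-2)}\varphi^2\le Ck\|\varphi\|_*^2$ and analogous bounds for the remaining cubic pieces. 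Combined with $\|(\varphi,\psi)\|_*=O(\lambda^{-1-\epsilon})$ from Proposition \ref{propnonlinear}, these contribute $O(k\lambda^{-2-2\epsilon})$. For the two inner products, a direct duality pairing with the weights yields $|\langle R_k,(\varphi,\psi)\rangle|+|\langle N_k(\varphi,\psi),(\varphi,\psi)\rangle|\le Ck\bigl(\|R_k\|_{**}+\|N_k(\varphi,\psi)\|_{**}\bigr)\|(\varphi,\psi)\|_*$, which by Lemma \ref{lemR}, Lemma \ref{lemN}, and Proposition \ref{propnonlinear} is again $O(k\lambda^{-2-2\epsilon})$.

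The main obstacle I anticipate is controlling the coupling integral with the negative exponent $(4-N)/(N-2)$, since for $N\ge 5$ no naive "sum of bubbles" pointwise bound applies to $W_{1,\lambda}^{(4-N)/(N-2)}$ directly. The rescue is once more the pointwise identity $W_{2,\lambda}=\kappa W_{1,\lambda}$, which collapses $W_{1,\lambda}^{(4-N)/(N-2)}W_{2,\lambda}^{N/(N-2)}$ to $\kappa^{N/(N-2)}W_{1,\lambda}^{4/(N-2)}$ and returns us to the weighted estimate already treated above. Once each right-hand side contribution is shown to be of order $k\lambda^{-2-\epsilon}$, the desired energy bound follows.
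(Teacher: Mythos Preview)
Your proposal is correct and follows essentially the same route as the paper: both test the reduced equation \eqref{eqnonlinear0} against $(\varphi,\psi)$, eliminate the multiplier terms via the orthogonality constraints, and bound the remaining pieces using the weighted norms and Proposition \ref{propnonlinear}. Your bookkeeping is slightly more streamlined in that you invoke the already-established bounds $\|R_k\|_{**}$ and $\|N_k(\varphi,\psi)\|_{**}$ directly, whereas the paper re-expands into four groups $M_1,\dots,M_4$; the only minor slip is that the cubic estimate $\int W_{1,\lambda}^{4/(N-2)}\varphi^2\le Ck\|\varphi\|_*^2$ actually carries an extra $\lambda^{2^*\theta}$ factor (cf.\ the paper's treatment of $M_1$), but this is harmless since $\theta>0$ is arbitrary and you have $\|(\varphi,\psi)\|_*^2=O(\lambda^{-2-2\epsilon})$ to spare.
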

\begin{proof}
By the system, we prove
\begin{align*}
&\int_{\R^N}|\nabla\varphi|^2+P(r,y'')\varphi^2+|\nabla\psi|^2+Q(r,y'')\psi^2\cr
=&\int_{\R^N}\Bigl[(W_{1,\lambda}+\varphi)^{2^*-1}+(W_{2,\lambda}+\psi)^{2^*-1}
+\frac{\beta}{2}\Bigl((W_{1,\lambda}+\varphi)^{\frac{2^*}{2}-1}(W_{2,\lambda}+\psi)^{\frac{2^*}{2}}\cr
&+(W_{1,\lambda}+\varphi)^{\frac{2^*}{2}}(W_{2,\lambda}+\psi)^{\frac{2^*}{2}-1}\Bigr)
-P(r,y'')W_{1,\lambda}-Q(r,y'')W_{2,\lambda}+\Delta W_{1,\lambda}\varphi+\Delta W_{2,\lambda}\psi\Bigr]\cr
=&\int_{\R^N}\Bigl((W_{1,\lambda}+\varphi)^{2^*-1}-W_{1,\lambda}^{2^*-1}\Bigr)\varphi+\Bigl((W_{2,\lambda}+\psi)^{2^*-1}-W_{2,\lambda}^{2^*-1}\Bigr)\psi\cr
&+\frac{\beta}{2}\int_{\R^N}\Bigl((W_{1,\lambda}+\varphi)^{\frac{2^*}{2}-1}(W_{2,\lambda}+\psi)^{\frac{2^*}{2}}\varphi-W_{1,\lambda}^{\frac{2^*}{2}-1}W_{2,\lambda}^{\frac{2^*}{2}}\varphi\cr
&+(W_{1,\lambda}+\varphi)^{\frac{2^*}{2}}(W_{2,\lambda}+\psi)^{\frac{2^*}{2}-1}\psi
-W_{1,\lambda}^{\frac{2^*}{2}}W_{2,\lambda}^{\frac{2^*}{2}-1}\psi\Bigr)\cr
&-\int_{\R^N} P(r,y'')W_{1,\lambda}\varphi+Q(r,y'')W_{2,\lambda}\psi\cr
&+\int_{\R^N}\Bigl(\Delta W_{1,\lambda}-W_{1,\lambda}^{2^*-1}-\frac{\beta}{2}W_{1,\lambda}^{\frac{2^*}{2}-1}W_{2,\lambda}^{\frac{2^*}{2}}\Bigr)\varphi
+\Bigl(\Delta W_{2,\lambda}-W_{2,\lambda}^{2^*-1}-\frac{\beta}{2}W_{1,\lambda}^{\frac{2^*}{2}}W_{2,\lambda}^{\frac{2^*}{2}-1}\Bigr)\psi\cr
=&:M_1+M_2+M_3+M_4.
\end{align*}
We have
\begin{align}\label{M1}
|M_1|\leq &C\int_{\R^N}W_{1,\lambda}^{2^*-2}\varphi^2+|\varphi|^{2^*}+W_{2,\lambda}^{2^*-2}\psi^2+|\psi|^{2^*}\cr
\leq& C\|(\varphi,\psi)\|_{*}^2\int_{\R^N}W_{1,\lambda}^{2^*-2}\Bigl(\sum\limits_{j=1}^k\frac{\lambda^{\frac{N-2}{2}}}{(1+\lambda|y-x_j|)^{\frac{N-2}{2}+\tau}}\Bigr)^2\cr
&+\|(\varphi,\psi)\|_{*}^{2^*}\int_{\R^N}\Bigl(\sum\limits_{j=1}^k\frac{\lambda^{\frac{N-2}{2}}}{(1+\lambda|y-x_j|)^{\frac{N-2}{2}+\tau}}\Bigr)^{2^*}\cr
\leq&C\Bigl(\|(\varphi,\psi)\|_{*}^2+\|(\varphi,\psi)\|_{*}^{2^*}\Bigr)\int_{\R^N}\Bigl(\sum\limits_{j=1}^k\frac{\lambda^{\frac{N-2}{2}}}{(1+\lambda|y-x_j|)^{\frac{N-2}{2}+\tau}}\Bigr)^{2^*}\cr
\leq& Ck\|(\varphi,\psi)\|_{*}^2\int_{\R^N}\Bigl(\frac{\lambda^{\frac{N-2}{2}}}{(1+\lambda|y-x_1|)^{\frac{N-2}{2}+\theta}}\Bigr)^{2^*}\Bigl(1+\sum\limits_{j=2}^k\frac{1}{(\lambda|x_j-x_1|)^{\tau-\theta}}\Bigr)^{2^*}
\cr
\leq&C k\|(\varphi,\psi)\|_{*}^2\lambda^{2^*\theta}\leq \frac{Ck}{\lambda^{2+2\epsilon-2^*\theta}},
\end{align}
where $\theta>0$ is any fixed small constant.
Similar as \eqref{M1}, for $W_{2,\lambda}=\kappa W_{1,\lambda}$, we obtain
\begin{align}\label{M2}
|M_2|\leq C k\|(\varphi,\psi)\|_{*}^2\lambda^{2^*\theta}\leq \frac{Ck}{\lambda^{2+2\epsilon-2^*\theta}}.
\end{align}
Similar to the proof of \eqref{2.5-1}, we can deduce
\begin{align}\label{M3}
|M_3|\leq & C\|(\varphi,\psi)\|_{*}\int_{\R^N}\sum\limits_{j=1}^m\frac{\lambda^{\frac{N-2}{2}}\xi}{(1+\lambda|y-x_j|)^{N-2}}\sum\limits_{i=1}^k\frac{\lambda^{\frac{N-2}{2}}}{(1+\lambda|y-x_i|)^{\frac{N-2}{2}+\tau}}\cr
\leq&C\frac{\|(\varphi,\psi)\|_{*}}{\lambda^{1+\epsilon}}\int_{\R^N}\sum\limits_{j=1}^k\frac{\lambda^{\frac{N+2}{2}}}{(1+\lambda|y-x_j|)^{\frac{N+2}{2}+\tau}}\sum\limits_{i=1}^k\frac{\lambda^{\frac{N-2}{2}}}{(1+\lambda|y-x_i|)^{\frac{N-2}{2}+\tau}}
\cr
\leq&\frac{\|(\varphi,\psi)\|_{*}}{\lambda^{1+\epsilon}}\leq\frac{C k}{\lambda^{2+2\epsilon}}.
\end{align}
Moreover, from the estimates of $J_1,\,J_3,\,J_4$ in Lemma \ref{lmA.5}, we obtain
\begin{align}\label{M4}
|M_4|\leq & C\frac{\|(\varphi,\psi)\|_{*}}{\lambda^{1+\epsilon}}\int_{\R^N}\sum\limits_{j=1}^k\frac{\lambda^{\frac{N+2}{2}}}{(1+\lambda|y-x_j|)^{\frac{N+2}{2}+\tau}}\sum\limits_{i=1}^k\frac{\lambda^{\frac{N-2}{2}}}{(1+\lambda|y-x_i|)^{\frac{N-2}{2}+\tau}}
\cr
\leq&C\frac{k\|(\varphi,\psi)\|_{*}}{\lambda^{1+\epsilon}}\leq C\frac{k}{\lambda^{2+2\epsilon}}.
\end{align}
Combining \eqref{M1}-\eqref{M4}, we deduce
\begin{align*}
\int_{\R^N}\Bigl(|\nabla\varphi|^2+P(r,y'')\varphi^2+|\nabla\psi|^2+Q(r,y'')\psi^2\Bigr)=O\Bigl(\frac{k}{\lambda^{2+\epsilon}}\Bigr).
\end{align*}
\end{proof}
\begin{lemma}
For any $C^1$ function $g(r,y'')$, it holds
\begin{align*}
\int_{D_\rho}g(r,y'')u_k^2=k\Bigl(\frac{1}{\lambda^2}g(r,y'')\int_{\R^N}U_{0,1}^2+o\Bigl(\frac{1}{\lambda^2}\Bigr)\Bigr).
\end{align*}
\end{lemma}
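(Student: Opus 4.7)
The plan is to decompose $u_k^2 = W_{1,\lambda}^2 + 2W_{1,\lambda}\varphi + \varphi^2$ and show that only the diagonal of $W_{1,\lambda}^2$ produces the leading term, while every other piece is $o(k/\lambda^2)$. First I would expand
\begin{align*}
W_{1,\lambda}^2 = \sum_{j=1}^k \xi^2 U_{x_j,\lambda}^2 + \sum_{i\neq j} \xi^2 U_{x_i,\lambda} U_{x_j,\lambda},
\end{align*}
and exploit the $\mathbb{Z}_k$-symmetry to reduce the diagonal to $k$ times a single integral, namely
$\int_{D_\rho} g\,\xi^2 U_{x_1,\lambda}^2\,dy$. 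A change of variable $y=x_1+z/\lambda$ pulls out the factor $\lambda^{-2}$; since $x_1$ lies in the interior of $\{\xi\equiv1\}$ for $k$ large, and since $U_{0,1}^2\in L^1(\R^N)$ precisely because $N\geq 5$, dominated convergence together with the continuity of $g$ gives the desired leading order
$\lambda^{-2} g(r,y'')\int_{\R^N} U_{0,1}^2 + o(\lambda^{-2})$ per bubble.

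Next I would control the off-diagonal bubble interactions. Using the standard two-bubble estimate (which can be read off from Lemma \ref{lmA.3} together with the change-of-variables computation used throughout Section \ref{s2}),
\begin{align*}
\int_{\R^N} U_{x_i,\lambda}\,U_{x_j,\lambda}\,dy = O\!\bigl(\lambda^{-(N-2)}|x_i-x_j|^{-(N-4)}\bigr)\qquad(N\geq 5),
\end{align*}
combined with $|x_1-x_j|\sim r_0\,j/k$ on the $k$-polygon and the balance $\lambda\in[L_0k^{(N-2)/(N-4)},L_1k^{(N-2)/(N-4)}]$, one computes that the full sum $\sum_{i\neq j}\int g\xi^2 U_{x_i,\lambda}U_{x_j,\lambda}$ is a factor of $k^{-2}$ (with a harmless $\log k$ when $N=5$) smaller than the diagonal $k\lambda^{-2}\int U_{0,1}^2$, so it is absorbed into the $o(k/\lambda^2)$ remainder.

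It remains to dispense with the $\varphi$--terms. Since $P(r,y'')\geq c_0>0$ throughout $D_\rho$ by \eqref{weqs1.3}, Lemma \ref{lm3.3} gives
\begin{align*}
\int_{D_\rho}\varphi^2 \leq \frac{1}{c_0}\int_{D_\rho} P(r,y'')\,\varphi^2 \leq \frac{Ck}{\lambda^{2+\epsilon}},
\end{align*}
so $\int_{D_\rho} g\,\varphi^2 = o(k/\lambda^2)$ at once. For the cross term, Cauchy--Schwarz combined with the already established $\int W_{1,\lambda}^2 = O(k/\lambda^2)$ yields
\begin{align*}
\Bigl|\int_{D_\rho} g\,W_{1,\lambda}\varphi\Bigr| \leq C\Bigl(\int W_{1,\lambda}^2\Bigr)^{1/2}\Bigl(\int\varphi^2\Bigr)^{1/2} = O\!\Bigl(\frac{\sqrt{k}}{\lambda}\cdot\frac{\sqrt{k}}{\lambda^{1+\epsilon/2}}\Bigr) = o\!\Bigl(\frac{k}{\lambda^2}\Bigr).
\end{align*}
Collecting everything gives the asserted expansion.

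The main obstacle, and the only place where the full hypothesis $N\geq 5$ is used in an essential way, is Step~1: the $L^2$-integrability of $U_{0,1}$ (which fails for $N=3,4$) is exactly what produces a finite leading coefficient and a rate $\lambda^{-2}$. Step~2 is delicate only in that one must check that the polygon-sum of two-bubble interactions stays subleading; this works out because the exponent $\lambda\sim k^{(N-2)/(N-4)}$ is tuned so that the interaction-to-diagonal ratio collapses to $k^{-2}$.
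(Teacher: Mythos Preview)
Your argument is correct and is precisely the standard route. The paper itself omits the proof entirely, referring the reader to Lemma~3.3 of \cite{Peng-Wang-Yan}; the decomposition $u_k^2=W_{1,\lambda}^2+2W_{1,\lambda}\varphi+\varphi^2$, the diagonal/off-diagonal splitting of the bubble sum with the change of variables $y=x_1+z/\lambda$, and the use of Lemma~\ref{lm3.3} to absorb the $\varphi$--contributions, is exactly how that lemma is proved there. Your observation that $N\ge 5$ enters through the $L^1$-integrability of $U_{0,1}^2$ is the right diagnosis, and your off-diagonal bookkeeping (ratio $\sim k^{-2}$, with an extra $\log k$ when $N=5$) matches the estimate one gets from Lemma~\ref{lmA.3} after summing over the polygon. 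One cosmetic point: the $g(r,y'')$ on the right-hand side of the statement is shorthand for $g(\bar r,\bar y'')$, and you have read it that way; there is no issue.
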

\begin{proof}
The proof is similar to Lemma 3.3 of \cite{Peng-Wang-Yan}, so we omit it.
\end{proof}
\begin{lemma}\label{lm3.5}
It holds
\begin{align*}
\int_{D_{4\delta}\setminus D_{3\delta}}(|\nabla\varphi|^2+|\nabla\psi|^2)=O\Bigl(\frac{k}{\lambda^{2+\epsilon}}\Bigr).
\end{align*}
\end{lemma}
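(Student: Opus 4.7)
The estimate follows immediately from Lemma \ref{lm3.3}. Since by hypothesis $P(r,y'')\geq 0$ and $Q(r,y'')\geq 0$ on $\R^+\times\R^{N-2}$, the integrand of Lemma \ref{lm3.3} pointwise dominates $|\nabla\varphi|^2+|\nabla\psi|^2$, and restricting the integration from $\R^N$ to $D_{4\delta}\setminus D_{3\delta}$ (interpreted in $\R^N$ via $y\mapsto(|y'|,y'')$) only decreases the right-hand side since every summand is nonnegative. Hence
\[
\int_{D_{4\delta}\setminus D_{3\delta}}\!\bigl(|\nabla\varphi|^2+|\nabla\psi|^2\bigr)\;\leq\;\int_{\R^N}\!\bigl(|\nabla\varphi|^2+P(r,y'')\varphi^2+|\nabla\psi|^2+Q(r,y'')\psi^2\bigr)\;=\;O\!\bigl(k\lambda^{-2-\epsilon}\bigr).
\]

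The purpose of packaging this estimate separately is to enable a Fubini/mean-value selection of a good radius in the subsequent Pohozaev analysis: averaging over $\rho\in(3\delta,4\delta)$ produces some $\rho^*\in(3\delta,4\delta)$ for which $\int_{\partial D_{\rho^*}}(|\nabla\varphi|^2+|\nabla\psi|^2)\,d\sigma\leq\delta^{-1}O(k\lambda^{-2-\epsilon})$, which is precisely the control needed to render the boundary remainders in \eqref{eqs3.16}, \eqref{eqs3.21}, \eqref{eqs3.22} subdominant relative to the interior leading term. No new obstruction arises at this step, since the substantive energy estimate has already been carried out in Lemma \ref{lm3.3}, where the weighted $L^*$-bound from Proposition \ref{propnonlinear} was used to close the nonlinear terms of the approximate equation.

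As a consistency check, a purely local argument is also available, which does not invoke the global bound. On $D_{4\delta}\setminus D_{3\delta}$ the cutoff $\xi$ vanishes, so $W_{1,\lambda}=W_{2,\lambda}=0$ and, consequently, the Lagrange-multiplier-type correction terms $W_{1,x_j}^{2^*-2}Y_{j,i}$, $W_{2,x_j}^{2^*-2}Z_{j,i}$ also vanish there. Thus $(\varphi,\psi)$ solves the autonomous system $-\Delta\varphi+P\varphi=\varphi^{2^*-1}+\tfrac{\beta}{2}\varphi^{2^*/2-1}\psi^{2^*/2}$ (and symmetrically for $\psi$) on this annulus; using the pointwise bound $|\varphi(y)|+|\psi(y)|=O(k\lambda^{-1-\epsilon-\tau})$ that follows from $\|(\varphi,\psi)\|_*=O(\lambda^{-1-\epsilon})$ of Proposition \ref{propnonlinear} together with $|y-x_j|\geq\delta$ throughout the annulus, a standard Caccioppoli cut-off-and-multiply argument recovers the same rate — in fact a stronger one, reflecting that this region is uniformly away from every bubble center.
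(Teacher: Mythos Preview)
Your primary argument is correct and is in fact more direct than the paper's own proof. You simply drop the nonnegative potential terms from the global estimate of Lemma~\ref{lm3.3} and restrict the domain of integration; since $P,Q\geq 0$ by the standing hypotheses of Theorem~\ref{th1}, this is legitimate and yields the claim in one line.

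The paper instead argues locally: it observes that the cutoff $\xi$ vanishes on $D_{5\delta}\setminus D_{2\delta}$, so $W_{1,\lambda}=W_{2,\lambda}=0$ there and the system reduces to an autonomous one for $(\varphi,\psi)$; a Caccioppoli-type inequality then controls $\int_{D_{4\delta}\setminus D_{3\delta}}(|\nabla\varphi|^2+|\nabla\psi|^2)$ by $\int_{D_{5\delta}\setminus D_{2\delta}}(\varphi^2+\psi^2+|\varphi|^{2^*}+|\psi|^{2^*})$, and the latter is estimated via Lemma~\ref{lm3.3} (using the positivity of $P,Q$ on $D_{10\delta}$ from \eqref{weqs1.3}). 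This is exactly the ``consistency check'' you sketch in your final paragraph. Your direct route is cleaner here because Lemma~\ref{lm3.3} already delivers a global $H^1$ bound; the paper's local argument would be the natural one if only a weighted $L^2$ estimate (rather than an $H^1$ estimate) were available, or if $P,Q$ were allowed to change sign. Your remarks on the subsequent mean-value selection of $\rho\in(3\delta,4\delta)$ also match precisely what the paper does in \eqref{eqs3.28}.
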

\begin{proof}
Noting that $W_{1,\lambda}=0$ in $D_{5\delta}\setminus D_{2\delta}$, we find from \eqref{2.5-1} that
\begin{align*}
\int_{D_{4\delta}\setminus D_{3\delta}}(|\nabla\varphi|^2+|\nabla\psi|^2)\leq& C\int_{D_{5\delta}\setminus D_{2\delta}}(|\varphi|^2+|\psi|^2+|\varphi|^{2^*}+|\psi|^{2^*})\cr
\leq& C\int_{D_{5\delta}\setminus D_{3\delta}}(\varphi^2+\psi^2)=O\Bigl(\frac{k}{\lambda^{2+\epsilon}}\Bigr).
\end{align*}
\end{proof}
From Lemma \ref{lm3.3} and Lemma \ref{lm3.5}, we know
\begin{align*}
\int_{D_{4\delta}\setminus D_{3\delta}}\Bigl(|\nabla\varphi|^2+|\varphi|^2+|\varphi|^{2^*}+|\nabla\psi|^2+|\psi|^2+|\psi|^{2^*}\Bigr)=O\Bigl(\frac{k}{\lambda^{2+\epsilon}}\Bigr).
\end{align*}
As a result, we can find a $\rho\in (3\delta,4\delta)$, such that
\begin{align}\label{eqs3.28}
\int_{\partial D_\rho}\Bigl(|\nabla\varphi|^2+|\varphi|^2+|\varphi|^{2^*}+|\nabla\psi|^2+|\psi|^2+|\psi|^{2^*}\Bigr)=O\Bigl(\frac{k}{\lambda^{2+\epsilon}}\Bigr).
\end{align}
Now we apply Lemma 3.3 and \eqref{eqs3.28} to obtain
\begin{align*}
k\Bigl[\frac{s^2}{\lambda^2}\Bigl(\frac{\partial P(r,y'')}{\partial y_i}+\kappa^2\frac{\partial Q(r,y'')}{\partial y_i}\Bigr)\int_{\R^N}w_{0,1}^2+o\Bigl(\frac{1}{\lambda^2}\Bigr)\Bigr]=o\Bigl(\frac{k}{\lambda^2}\Bigr),
\end{align*}
and
\begin{align*}
k\Bigl[\frac{s^2}{\lambda^2}\frac{1}{2\bar{r}}\Bigl(\frac{\partial \bar{r}^2P(r,y'')}{\partial \bar{r}}+\kappa^2\frac{\partial\bar{r}^2 Q(r,y'')}{\partial \bar{r}}\Bigr)\int_{\R^N}w_{0,1}^2+o\Bigl(\frac{1}{\lambda^2}\Bigr)\Bigr]=o\Bigl(\frac{k}{\lambda^2}\Bigr).
\end{align*}
Therefore, the equations to determine $(\bar{r},y'')$ are
\begin{align*}
\frac{\partial( P(\bar{r},\bar{y}'')+\kappa^2Q(\bar{r},\bar{y}''))}{\partial \bar{y}_i}=o(1),\,\,\,\frac{\partial r^2P(\bar{r},\bar{y}'')+\kappa^2\partial r^2Q(\bar{r},\bar{y}'')}{\partial \bar{r}}=o(1),
\end{align*}
and
\begin{align}\label{B1}
-\frac{B_1(P(\bar{r},\bar{y}'')+\kappa^2Q(\bar{r},\bar{y}''))}{\lambda^3}+\frac{(C_1+\beta\kappa^{\frac{2^*}{2}} C_1)k^{N-2}}{\lambda^{N-1}}=O\Bigl(\frac{1}{\lambda^{3+\epsilon}}\Bigr).
\end{align}

Letting $\lambda=tk^{\frac{N-2}{N-4}}$, then $t\in [L_0,L_1]$ since $\lambda\in[L_0k^{\frac{N-2}{N-4}},L_1k^{\frac{N-2}{N-4}}]$. Then from \eqref{B1}, we get
\begin{align*}
-\frac{B_1(P(\bar{r},\bar{y}'')+\kappa^2Q(\bar{r},\bar{y}''))}{t^3}+\frac{(C_1+\beta \kappa^{\frac{2^*}{2}}C_1)k^{N-2}}{t^{N-1}}=o(1),\,\,\,t\in[L_0,L_1].
\end{align*}
Let
\begin{align}\label{B2}
F(t,\bar{r},\bar{y}'')=\Bigl(\nabla_{\bar{r},\bar{y}''}(P+\kappa^2Q)(\bar{r},\bar{y}''),-\frac{B_1}{t^3}+\frac{C_1+\beta \kappa^{\frac{2^*}{2}}C_1}{t^{N-1}}\Bigr),
\end{align}
then
\begin{align*}
deg\Bigl(F(t,\bar{r},\bar{y}''),[D_1,D_2]\times B_{\frac{1}{\mu^{1-\theta}}}(r_0,y_0'')\Bigr)=deg\Bigl(\nabla_{\bar{r},\bar{y}''}(P+\kappa^2Q)(\bar{r},\bar{y}''),B_{\frac{1}{\mu^{1-\theta}}}(r_0,y_0'')\Bigr)\neq0.
\end{align*}
So, \eqref{B2} have a solution $t_k\in [L_0,L_1]$, $(\bar{r}_k,\bar{y}'')\in B_\theta(r_0,y_0'')$.
\appendix

\section{energy estimates and some known results }\label{sa}

In this section, we mainly do some energy expansions and give some known results which are used before.

\begin{lemma}\label{lmA.1}
If $N\geq5$, then
\begin{align*}
I_1=k\Bigl(-\frac{B_1}{\lambda^3}(P(r,y'')+\kappa^2Q(r,y''))+\sum\limits_{j=2}^k\frac{C_1(1+\beta\kappa^{\frac{2^*}{2}})}{\lambda^{N-1}|x_1-x_j|^{N-2}}\Bigr)+O\Bigl(\frac{1}{\lambda^{3+\epsilon}}\Bigr),
\end{align*}
where $B_1=\ds\int_{\R^N}w_{0,1}^{2}$, $C_1=\ds\int_{\R^N}w_{0,1}^{2^*}$ and $1+\beta \kappa^{\frac{2^*}{2}}>0$.
\end{lemma}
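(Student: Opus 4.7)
My approach is to decompose $I_1$ into a \emph{potential} part (the terms involving $P$ and $Q$) and a \emph{nonlinear interaction} part, and then extract the two claimed leading contributions. The computation rests on three structural features: (a) $W_{2,\lambda}=\kappa W_{1,\lambda}$ by construction; (b) $W_{i,\lambda}=\xi W_{i,\lambda}^*$ with $W_{1,\lambda}^*=\sum_j U_{x_j,\lambda}$; and (c) each bubble pair $(U_{x_j,\lambda},V_{x_j,\lambda})$ solves the limit system \eqref{weqs1.2}, so that summing the bubble-Laplacians cancels the leading nonlinearities exactly.

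For the potential part, I would use $W_{i,\lambda}\partial_\lambda W_{i,\lambda}=\tfrac12\partial_\lambda W_{i,\lambda}^2$ to write
$$\int PW_{1,\lambda}\partial_\lambda W_{1,\lambda}+\int QW_{2,\lambda}\partial_\lambda W_{2,\lambda}=\tfrac{1}{2}\partial_\lambda\!\int(P+\kappa^2Q)W_{1,\lambda}^2.$$
After expanding $W_{1,\lambda}^2=\xi^2(\sum_j U_{x_j,\lambda})^2$, the $k$ diagonal terms dominate: rescaling $z=\lambda(y-x_j)$ and using the $C^1$ regularity of $P,Q$ shows each diagonal equals $P(r,y'')B_1/\lambda^2+o(1/\lambda^2)$, while the off-diagonal crosses are controlled by Lemma \ref{lmA.3} and absorbed into the remainder because bubbles are $O(1/k)$-separated while their width is $1/\lambda$ with $\lambda\sim k^{(N-2)/(N-4)}$. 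Differentiating in $\lambda$ yields the first claimed term $-kB_1(P+\kappa^2 Q)/\lambda^3$ up to $O(k/\lambda^{3+\epsilon})$.

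For the nonlinear interaction, I would combine $-\Delta U_{x_j,\lambda}=U_{x_j,\lambda}^{2^*-1}+\tfrac{\beta}{2}U_{x_j,\lambda}^{2^*/2-1}V_{x_j,\lambda}^{2^*/2}$ with $W_{2,\lambda}=\kappa W_{1,\lambda}$ to obtain (modulo the cutoff)
$$-\Delta W_{1,\lambda}-W_{1,\lambda}^{2^*-1}-\tfrac{\beta}{2}W_{1,\lambda}^{2^*/2-1}W_{2,\lambda}^{2^*/2}=\bigl(1+\tfrac{\beta}{2}\kappa^{2^*/2}\bigr)\Bigl[\sum_jU_{x_j,\lambda}^{2^*-1}-(W_{1,\lambda}^*)^{2^*-1}\Bigr]+\mathcal{R}_\xi,$$
where $\mathcal{R}_\xi=-(\Delta\xi)W_{1,\lambda}^*-2\nabla\xi\cdot\nabla W_{1,\lambda}^*$ is supported in the annulus $\{\delta\le|(r,y'')-(r_0,y_0'')|\le 2\delta\}$ away from every $x_j$ and contributes only $O(\lambda^{-(N-2)})\subset O(\lambda^{-3-\epsilon})$ after pairing with $\partial_\lambda W_{1,\lambda}$. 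An analogous identity applies to the $W_{2,\lambda}$ component with prefactor $\kappa^{2^*}+\tfrac{\beta}{2}\kappa^{2^*/2}$. I would then split $\R^N=\bigcup_\ell\Omega_\ell$ into the $k$ symmetric dominance sectors around each $x_\ell$, use the rotational symmetry to reduce to $\Omega_1$ with a multiplicity of $k$, and expand
$$(W_{1,\lambda}^*)^{2^*-1}-\sum_jU_{x_j,\lambda}^{2^*-1}=(2^*-1)U_{x_1,\lambda}^{2^*-2}\sum_{j\neq 1}U_{x_j,\lambda}+\text{(lower-order interactions)}.$$
For $y\in\Omega_1$ each $U_{x_j,\lambda}$ with $j\neq 1$ is essentially constant at the value $\sim\lambda^{-(N-2)/2}/|x_1-x_j|^{N-2}$; pulling this out, changing variables $z=\lambda(y-x_1)$, and evaluating the resulting one-bubble integral $\int w_{0,1}^{2^*-2}(\tfrac{N-2}{2}w_{0,1}+z\cdot\nabla w_{0,1})\,dz$ (a dimensional multiple of $C_1=\int w_{0,1}^{2^*}$ after integration by parts) reproduces the Coulomb-type sum $\sum_{j\neq 1}C_1/(\lambda^{N-1}|x_1-x_j|^{N-2})$.

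The main obstacle is the precise tracking of the coefficient of the interaction sum: the two equations assemble into $(1+\tfrac{\beta}{2}\kappa^{2^*/2})+(\kappa^{2^*}+\tfrac{\beta}{2}\kappa^{2^*/2})=1+\kappa^{2^*}+\beta\kappa^{2^*/2}$, which must be collapsed to the stated $1+\beta\kappa^{2^*/2}$ by invoking the constitutive identity \eqref{kappa} and absorbing the residual $\kappa^{2^*}$ factor into the normalisation of $C_1$. A secondary subtlety, critical for $N=5$, is the pointwise control of the binomial expansion of $(W_{1,\lambda}^*)^{2^*-1}$: since $2^*-2=4/3<2$ no clean second-order Taylor bound is available and one must rely on the pointwise interaction estimates that were already used in Lemma \ref{lemN}, together with Lemma \ref{lmA.3}, to place the higher-order interactions into the $O(1/\lambda^{3+\epsilon})$ remainder.
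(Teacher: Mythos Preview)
Your proposal follows essentially the same route as the paper: replace the cut-off approximate solutions by $W_{i,\lambda}^*$, use the single-bubble identities $-\Delta U_{x_j,\lambda}=U_{x_j,\lambda}^{2^*-1}+\tfrac{\beta}{2}U_{x_j,\lambda}^{2^*/2-1}V_{x_j,\lambda}^{2^*/2}$ together with $W_{2,\lambda}=\kappa W_{1,\lambda}$ to reduce the nonlinear part to $(\sum_j U_{x_j})^{2^*-1}-\sum_j U_{x_j}^{2^*-1}$, then Taylor-expand on $\Omega_1$ after the symmetry reduction; the potential part is handled by the same diagonal/off-diagonal splitting and rescaling. Your concern about the interaction coefficient assembling into $1+\kappa^{2^*}+\beta\kappa^{2^*/2}$ rather than the stated $1+\beta\kappa^{2^*/2}$ is legitimate---the paper's proof is silent on this and in fact uses two different normalizations of $C_1$ (``$\int W_{0,1}^{2^*}$'' in the proof versus ``$\int w_{0,1}^{2^*}$'' in the statement)---but since only the positivity of the constant (guaranteed by assumption~$(iii)$) and the order $\lambda^{-(N-1)}|x_1-x_j|^{-(N-2)}$ are used downstream, this does not affect the argument.
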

\begin{proof}
We have
$$
\frac{\partial I(W_{1,x_j})}{\partial\lambda}=\frac{\partial W^*_{1,x_j}}{\partial\lambda}+O\Bigl(\frac{k}{\lambda^{N-2}}\Bigr).
$$
Direct calculations show
\begin{align*}
-\Delta\sum\limits_{j=1}^kW^*_{1,x_j}=\sum\limits_{j=1}^k{W^*}_{1,x_j}^{2^*-1}+\frac{\beta}{2}\sum\limits_{j=1}^k{W^*}_{1,x_j}^{\frac{2^*}{2}-1}{W^*}_{2,x_j}^{\frac{2^*}{2}}
\end{align*}
and
\begin{align*}
-\Delta\sum\limits_{j=1}^kW^*_{2,x_j}=\sum\limits_{j=1}^k{W^*}_{2,x_j}^{2^*-1}+\frac{\beta}{2}\sum\limits_{j=1}^k{W^*}_{1,x_j}^{\frac{2^*}{2}}{W^*}_{2,x_j}^{\frac{2^*}{2}-1}.
\end{align*}
Then, we have
\begin{align*}
I_1=&\int_{\R^N}P(y)W^*_{1,\lambda}\frac{\partial W^*_{1,\lambda}}{\partial\lambda}-\Bigl({W^*}_{1,\lambda}^{2^*-1}-\sum\limits_{j=1}^k{W^*}_{1,x_j}^{2^*-1}\Bigr)\frac{\partial {W^*}_{1,\lambda}}{\partial \lambda}
+Q(y)W^*_{2,\lambda}\frac{\partial W^*_{2,\lambda}}{\partial\lambda}\cr
&-\Bigl({W^*}_{2,\lambda}^{2^*-1}-\sum\limits_{j=1}^k{W^*}_{2,x_j}^{2^*-1}\Bigr)\frac{\partial {W^*}_{2,\lambda}}{\partial \lambda}
+\frac{\beta}{2}\int_{\R^N}\Bigl({W^*}_{1,\lambda}^{\frac{2^*}{2}-1}{W^*}_{2,\lambda}^{\frac{2^*}{2}}-\sum\limits_{j=1}^k{W^*}_{1,x_j}^{\frac{2^*}{2}-1}{W^*}_{2,x_j}^{\frac{2^*}{2}}\Bigr)\frac{\partial {W^*}_{1,\lambda}}{\partial\lambda}\cr
&+\Bigl({W^*}_{1,\lambda}^{\frac{2^*}{2}}{W^*}_{2,\lambda}^{\frac{2^*}{2}-1}-\sum\limits_{j=1}^k{W^*}_{1,x_j}^{\frac{2^*}{2}}{W^*}_{2,x_j}^{\frac{2^*}{2}-1}\Bigr)\frac{\partial {W^*}_{2,\lambda}}{\partial\lambda}.
\end{align*}
It is easy to check that
\begin{align*}
&\int_{\R^N}P(y)W^*_{1,\lambda}\frac{\partial W^*_{1,\lambda}}{\partial\lambda}=k\Bigl[\int_{\R^N}P(y)W^*_{1,x_1}\frac{\partial W^*_{1,x_1}}{\partial\lambda}+O\Bigl(\frac{1}{\lambda}\int_{\R^N}W^*_{1,x_1}\sum\limits_{j=2}^kmW^*_{1,x_j}\Bigr)\Bigr]\cr
=&k\Bigl[-\frac{P(\bar{r},\bar{y}'')}{\lambda^3}\int_{\R^N}U_{0,1}^2+O\Bigl(\frac{1}{\lambda^3}\sum\limits_{j=1}^k\frac{1}{(\lambda|x_1-x_j|)^{N-4}}\Bigr)+\frac{1}{\lambda^{2+\epsilon}}\Bigr]\cr
=&k\Bigl[-\frac{P(\bar{r},\bar{y}'')}{\lambda^3}\int_{\R^N}U_{0,1}^2+O\Bigl(\frac{1}{\lambda^{3+\epsilon}}\Bigr)\Bigr].
\end{align*}
Similarly, we have
\begin{align*}
\int_{\R^N}Q(y)W^*_{2,\lambda}\frac{\partial W^*_{2,\lambda}}{\partial\lambda}=k\Bigl[-\frac{Q(\bar{r},\bar{y}'')}{\lambda^3}\int_{\R^N}V_{0,1}^2+O\Bigl(\frac{1}{\lambda^{3+\epsilon}}\Bigr)\Bigr].
\end{align*}
On the other hand, we have
\begin{align*}
&\int_{\R^N}\Bigl({W^*}_{1,\lambda}^{2^*-1}-\sum\limits_{j=1}^k{W^*}_{1,x_j}^{2^*-1}\Bigr)\frac{\partial W^*_{1,\lambda}}{\partial\lambda}
=k\int_{\Omega_1}\Bigl({W^*}_{1,\lambda}^{2^*-1}-\sum\limits_{j=1}^k{W^*}_{1,x_j}\Bigr)\frac{\partial {W^*}_{1,\lambda}}{\partial\lambda}\cr
=&k\int_{\Omega_1}(2^*-1){W^*}_{1,x_1}^{2^*-2}\sum\limits_{j=2}^k{W^*}_{1,x_j}\frac{\partial {W^*}_{1,x_1}}{\partial \lambda}+O\Bigl(\frac{1}{\lambda^{3+\epsilon}}\Bigr)\cr
=&k\Bigl(-\sum\limits_{j=2}^k\frac{C_1}{\lambda^{(N-1)}|x_1-x_j|^{N-2}}\Bigr)+O\Bigl(\frac{1}{\lambda^{3+\epsilon}}\Bigr)
\end{align*}
for some constant $C_1=\ds\int_{\R^N}W_{0,1}^{2^*}>0$.

In view of $W^*_{2,\lambda}=kW^*_{1,\lambda}$, we estimate
\begin{align*}
 \int_{\R^N}\Bigl({W^*}_{1,\lambda}^{\frac{2^*}{2}-1}{W^*}_{2,\lambda}^{\frac{2^*}{2}}&-\sum\limits_{j=1}^k{W^*}_{1,x_j}^{\frac{2^*}{2}-1}{W^*}_{2,x_j}^{\frac{2^*}{2}}\Bigr)\frac{\partial {W^*}_{1,\lambda}}{\partial\lambda}
 =O\int_{\R^N}\Bigl({W^*}_{1,\lambda}^{2^*-1}-\sum\limits_{j=1}^k{W^*}_{1,x_j}^{2^*-1}\Bigr)\frac{\partial {W^*}_{1,\lambda}}{\partial\lambda}\cr
 =&kO\Bigl(-\sum\limits_{j=2}^k\frac{\kappa^{\frac{2^*}{2}}\ds\int_{\R^N}W_{0,1}^{2^*}}{\lambda^{N-1}|x_1-x_j|^{N-2}}\Bigr)+O\Bigl(\frac{1}{\lambda^{3+\epsilon}}\Bigr)
\end{align*}
and
\begin{align*}
 \int_{\R^N}\Bigl(W_{1,\lambda}^{\frac{2^*}{2}}W_{2,\lambda}^{\frac{2^*}{2}-1}-&\sum\limits_{j=1}^kW_{1,x_j}^{\frac{2^*}{2}}W_{2,x_j}^{\frac{2^*}{2}-1}\Bigr)\frac{\partial W_{2,\lambda}}{\partial\lambda}
 =O\int_{\R^N}\Bigl(W_{1,\lambda}^{2^*-1}-\sum\limits_{j=1}^kW_{1,x_j}^{2^*-1}\Bigr)\frac{\partial W_{2,\lambda}}{\partial\lambda}\cr
 =&kO\Bigl(-\sum\limits_{j=2}^k\frac{\kappa^{\frac{2^*}{2}\ds\int_{\R^N}W_{0,1}^{2^*}}}{\lambda^{N-1}|x_1-x_j|^{N-2}}\Bigr)+O\Bigl(\frac{1}{\lambda^{3+\epsilon}}\Bigr).
\end{align*}
So, we obtain
\begin{align*}
I_1=k\Bigl(-\frac{B_1}{\lambda^3}\bigl(P(\bar{r},\bar{y}'')+\kappa^2Q(\bar{r},\bar{y}'')\bigr)+\sum\limits_{j=2}^k\frac{C_1(1+\beta \kappa^{\frac{2^*}{2}})}{\lambda^{N-1}|x_1-x_j|^{N-2}}\Bigr)+O\Bigl(\frac{1}{\lambda^{2+\epsilon}}\Bigr),
\end{align*}
where $B_1=\ds\int_{\R^N}W_{0,1}^{2}$, $C_1=\ds\int_{\R^N}W_{0,1}^{2^*}$ and $1+\beta \kappa^{\frac{2^*}{2}}>0$.
\end{proof}

\begin{lemma}\label{lmA.2}
If $N\geq5$, then
\begin{align*}
\frac{\partial I(W_{1,\lambda},W_{2,\lambda})}{\partial\bar{r}}=k\Bigl(\frac{B_1}{\lambda^2}\frac{\partial (P(\bar{r},\bar{y}'')+\kappa^2Q(\bar{r},\bar{y}''))}{\partial\bar{r}}+\sum\limits_{j=2}^k\frac{C_1(1+\beta \kappa^{\frac{2^*}{2}})}{\lambda^{N-2}|x_1-x_j|^{N-2}}\Bigr)
\end{align*}
and
\begin{align*}
\frac{\partial I(W_{1,\lambda},W_{2,\lambda})}{\partial \bar{y}''}=k\Bigl(\frac{B_1}{\lambda^2}\frac{\partial (P(\bar{r},\bar{y}'')+\kappa^2Q(\bar{r},\bar{y}''))}{\partial \bar{y}''_k}\Bigr)
+O\Bigl(\frac{1}{\lambda^{1+\epsilon}}\Bigr),
\end{align*}
where $B_1$ and $C_1$ are the same positive constants in Lemma \ref{lmA.1}.
\end{lemma}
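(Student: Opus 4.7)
The strategy closely mirrors the proof of Lemma \ref{lmA.1}: I would differentiate $I(W_{1,\lambda},W_{2,\lambda})$ term by term, substitute the equations satisfied by $W^*_{i,x_j}$, and then isolate the leading contributions coming from the potentials $P,Q$ and the pairwise interactions between bubbles. Throughout I will use $W_{2,\lambda}=\kappa W_{1,\lambda}$ (where the cutoffs permit this identification, up to terms supported on the $\delta\le |(r,y'')-(r_0,y_0'')|\le 2\delta$ annulus, which are harmless because the bubbles decay like $\lambda^{-(N-2)/2}$ there) and the algebraic identity $s^{2^*-2}=\frac{2}{2+\beta\kappa^{2^*/2}}$.

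First I would compute
\begin{align*}
\frac{\partial I(W_{1,\lambda},W_{2,\lambda})}{\partial\bar r}
=\frac{\partial I(W^*_{1,\lambda},W^*_{2,\lambda})}{\partial\bar r}+O\Bigl(\tfrac{k}{\lambda^{N-2}}\Bigr)
\end{align*}
and, by the PDEs $-\Delta W^*_{i,\lambda}=\sum_{j}[(W^*_{i,x_j})^{2^*-1}+\tfrac{\beta}{2}(W^*_{i,x_j})^{2^*/2-1}(W^*_{3-i,x_j})^{2^*/2}]$, rewrite the derivative as
\begin{align*}
\frac{\partial I(W^*_{1,\lambda},W^*_{2,\lambda})}{\partial \bar r}
&=\int_{\R^N}\!P(y)\,W^*_{1,\lambda}\tfrac{\partial W^*_{1,\lambda}}{\partial\bar r}+\int_{\R^N}\!Q(y)\,W^*_{2,\lambda}\tfrac{\partial W^*_{2,\lambda}}{\partial\bar r}\\
&\quad-\int_{\R^N}\!\Bigl(W^{*\,2^*-1}_{1,\lambda}-\sum_j W^{*\,2^*-1}_{1,x_j}\Bigr)\tfrac{\partial W^*_{1,\lambda}}{\partial\bar r}
-\int_{\R^N}\!\Bigl(W^{*\,2^*-1}_{2,\lambda}-\sum_j W^{*\,2^*-1}_{2,x_j}\Bigr)\tfrac{\partial W^*_{2,\lambda}}{\partial\bar r}\\
&\quad-\tfrac{\beta}{2}\int_{\R^N}\!\Bigl(W^{*\,2^*/2-1}_{1,\lambda}W^{*\,2^*/2}_{2,\lambda}-\sum_j W^{*\,2^*/2-1}_{1,x_j}W^{*\,2^*/2}_{2,x_j}\Bigr)\tfrac{\partial W^*_{1,\lambda}}{\partial\bar r}\\
&\quad-\tfrac{\beta}{2}\int_{\R^N}\!\Bigl(W^{*\,2^*/2}_{1,\lambda}W^{*\,2^*/2-1}_{2,\lambda}-\sum_j W^{*\,2^*/2}_{1,x_j}W^{*\,2^*/2-1}_{2,x_j}\Bigr)\tfrac{\partial W^*_{2,\lambda}}{\partial\bar r}.
\end{align*}
For the potential terms I would use that $\tfrac{\partial W^*_{i,x_1}}{\partial\bar r}=\partial_{r} U_{x_1,\lambda}$ (resp.\ $\partial_r V_{x_1,\lambda}$), Taylor expand $P,Q$ around $x_1$, and appeal to the standard scaling identity $\int_{\R^N}U_{0,1}^2=B_1$ to obtain a leading term $k\,\tfrac{B_1}{\lambda^2}\tfrac{\partial P(\bar r,\bar y'')}{\partial\bar r}$, and similarly $\kappa^2\,k\,\tfrac{B_1}{\lambda^2}\tfrac{\partial Q}{\partial\bar r}$ (here I use that $\int V_{0,1}^2=\kappa^2\int U_{0,1}^2$). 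The cross terms where two different $W^*_{1,x_i}$ meet are negligible, of size $O(k/\lambda^{2+\epsilon})$, by Lemma~\ref{lmA.3}.

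For the interaction terms, I would localize to $\Omega_1$ and use symmetry to write the contribution as $k$ times the contribution on $\Omega_1$. Expanding $W^{*\,2^*-1}_{1,\lambda}-\sum_j W^{*\,2^*-1}_{1,x_j}=(2^*-1)W^{*\,2^*-2}_{1,x_1}\sum_{j\ge 2}W^*_{1,x_j}+\text{lower order}$ and integrating against $\tfrac{\partial W^*_{1,x_1}}{\partial\bar r}$, the identity $\int_{\R^N} U^{2^*-2}_{x_1,\lambda}\tfrac{\partial U_{x_1,\lambda}}{\partial\bar r}\cdot U_{x_j,\lambda}=\tfrac{C_1}{\lambda^{N-2}|x_1-x_j|^{N-2}}(1+o(1))$ with $C_1=\int U_{0,1}^{2^*}$ produces the sum $\sum_{j\ge 2}\tfrac{C_1}{\lambda^{N-2}|x_1-x_j|^{N-2}}$. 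The analogous calculation with $W_{2,\lambda}$ contributes a factor $\kappa^{2^*}=\kappa^{2}\cdot\kappa^{2^*-2}$ and the coupled $\beta$ terms contribute $\beta\kappa^{2^*/2}\tfrac{C_1}{\lambda^{N-2}|x_1-x_j|^{N-2}}$; summing and using \eqref{kappa} one collects them into the single factor $(1+\beta\kappa^{2^*/2})C_1$, yielding the claimed formula for $\partial I/\partial\bar r$.

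For $\partial I/\partial\bar y''_k$ the same decomposition works, but here the interaction terms collapse: for each fixed pair $i\neq j$, $\int_{\R^N}U^{2^*-2}_{x_i,\lambda}\,U_{x_j,\lambda}\,\partial_{\bar y''_k}U_{x_i,\lambda}$ changes by a pure $y''$-translation when all $x_\ell$'s are shifted by $\bar y''$, so by translation invariance of Lebesgue measure it reduces to the derivative of $\int U^{2^*-2}_{x_i,\lambda}U_{x_j,\lambda}\partial_{\bar y''_k}U_{x_i,\lambda}$, which one checks is $O(1/\lambda^{1+\epsilon})$ after summing in $j$ via Lemma~\ref{lmA.3}. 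The potential term produces the leading piece $k\tfrac{B_1}{\lambda^2}\tfrac{\partial(P+\kappa^2 Q)}{\partial\bar y''_k}$ exactly as in the $\bar r$ case, and the cutoff-error plus cross-interaction contributions are swept into $O(1/\lambda^{1+\epsilon})$.

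The main obstacle is the bookkeeping of the $\beta$-coupled interaction integrals; one must verify that after using $W_{2,\lambda}=\kappa W_{1,\lambda}$ the four contributions (pure-$u$, pure-$v$, and the two coupled cross-products, along with their derivatives in $\bar r$) assemble precisely into the prefactor $C_1(1+\beta\kappa^{2^*/2})$. This is where the choice of $\kappa$ through \eqref{kappa} becomes essential and also where one must check that the cutoff $\xi$ introduces only errors of order $O(1/\lambda^{N-2})$, absorbable into $O(1/\lambda^{1+\epsilon})$ since $N\ge 5$.
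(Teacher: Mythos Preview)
The paper states Lemma~\ref{lmA.2} without proof, evidently regarding it as a routine variant of the computation carried out for Lemma~\ref{lmA.1}. Your proposal follows exactly that template---replace $\partial/\partial\lambda$ by $\partial/\partial\bar r$ (resp.\ $\partial/\partial\bar y''_k$), reduce to the uncut $W^*_{i,\lambda}$'s up to $O(k/\lambda^{N-2})$, and then split into potential terms and bubble--interaction terms---so there is nothing to compare: this \emph{is} the intended argument.

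Two small remarks on your write-up. First, for the $\bar y''$ derivative the cleanest justification that the interaction term drops out is simply that $|x_i-x_j|=2\bar r\sin\frac{(i-j)\pi}{k}$ is independent of $\bar y''$, so the pairwise interaction energy has vanishing $\bar y''$-derivative; your translation-invariance phrasing is correct but more roundabout. Second, in the $\bar r$ case the identity you quote for the interaction integral should really read as (a constant times) $\partial_{\bar r}\bigl(\lambda^{-(N-2)}|x_1-x_j|^{-(N-2)}\bigr)$, which equals $-(N-2)\bar r^{-1}\lambda^{-(N-2)}|x_1-x_j|^{-(N-2)}$; the paper's statement absorbs this prefactor into the constant (it writes ``$C_1$'' but strictly speaking the constant in front differs from the $C_1$ of Lemma~\ref{lmA.1} by the factor $-(N-2)/\bar r$). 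This is a cosmetic point and does not affect the correctness of your outline.
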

For each fixed $k$ and $j$, we consider the following function
\begin{align*}
g_{k,j}=\frac{1}{(1+|y-x_j|)^\alpha}\frac{1}{(1+|y-x_k|)^\beta},
\end{align*}
where $\alpha\geq1$ and $\beta\geq1$ are two constants.
\begin{lemma}\label{lmA.3}
\text{\rm (\cite{Wei-Yan}, Lemma B.1)} For any constants $0<\delta\leq\min\{\alpha,\beta\}$, there is a constant $C>0$, such that
\begin{align*}
g_{k,j}(y)\leq\frac{C}{|x_k-x_j|^\delta}\Bigl(\frac{1}{(1+|y-x_k|)^{\alpha+\beta-\delta}}+\frac{1}{(1+|y-x_j|)^{\alpha+\beta-\delta}}\Bigr).
\end{align*}
\end{lemma}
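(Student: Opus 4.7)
The statement is a purely algebraic decay-interaction lemma, so my plan is to reduce it, via the triangle inequality, to a one-line monotonicity estimate. First, from $|x_k-x_j|\leq|y-x_k|+|y-x_j|\leq(1+|y-x_k|)+(1+|y-x_j|)$ combined with the elementary inequality $(a+b)^\delta\leq C_\delta(a^\delta+b^\delta)$, valid for every $\delta>0$, I obtain
\[
|x_k-x_j|^\delta\leq C\bigl[(1+|y-x_k|)^\delta+(1+|y-x_j|)^\delta\bigr].
\]
Dividing this bound by $|x_k-x_j|^\delta\,(1+|y-x_j|)^\alpha(1+|y-x_k|)^\beta$ and rearranging gives
\[
g_{k,j}(y)\leq\frac{C}{|x_k-x_j|^\delta}\Bigl(\frac{1}{(1+|y-x_j|)^\alpha(1+|y-x_k|)^{\beta-\delta}}+\frac{1}{(1+|y-x_j|)^{\alpha-\delta}(1+|y-x_k|)^{\beta}}\Bigr),
\]
and the hypothesis $\delta\leq\min\{\alpha,\beta\}$ guarantees that the reduced exponents $\alpha-\delta$ and $\beta-\delta$ are non-negative, which is what makes the next step legal.

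Next I would bound each of the two summands separately by the right-hand side of the claim. For the first summand I split into the two cases $1+|y-x_j|\leq 1+|y-x_k|$ and $1+|y-x_j|\geq 1+|y-x_k|$: in the former case $(1+|y-x_j|)^\alpha(1+|y-x_k|)^{\beta-\delta}\geq(1+|y-x_j|)^{\alpha+\beta-\delta}$, while in the latter case the same product is $\geq(1+|y-x_k|)^{\alpha+\beta-\delta}$, so in either event
\[
\frac{1}{(1+|y-x_j|)^\alpha(1+|y-x_k|)^{\beta-\delta}}\leq\frac{1}{(1+|y-x_j|)^{\alpha+\beta-\delta}}+\frac{1}{(1+|y-x_k|)^{\alpha+\beta-\delta}}.
\]
The symmetric argument (swapping the roles of $j$ and $k$) handles the second summand identically, and summing the two yields the claimed inequality.

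There is no real obstacle here; the only point requiring mild care is to verify that the elementary exponent inequality $(a+b)^\delta\leq C_\delta(a^\delta+b^\delta)$ is used with a constant depending only on $\delta$ (and hence on $\alpha,\beta$), and to keep the hypothesis $\delta\leq\min\{\alpha,\beta\}$ in mind so that the monotonicity step $a^{\alpha}b^{\beta-\delta}\geq a^{\alpha+\beta-\delta}$ when $a\leq b$ is valid. Since everything is independent of the dimension and of the locations $x_k,x_j$, the constant $C$ produced this way is uniform, as required for its later use in the weighted estimates of Section~\ref{s2}.
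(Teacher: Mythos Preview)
Your proof is correct. The paper does not supply its own argument for this lemma; it simply quotes it from Wei--Yan \cite{Wei-Yan}, Lemma~B.1, so there is nothing in the present paper to compare against beyond the statement itself. Your triangle-inequality-plus-monotonicity argument is essentially the standard one: the original Wei--Yan presentation organizes it by first splitting into the regions $\{|y-x_j|\leq|y-x_k|\}$ and its complement and then using $|y-x_k|\geq\tfrac12|x_j-x_k|$ on the first region, whereas you instead first extract the factor $|x_k-x_j|^{-\delta}$ via $(a+b)^\delta\leq C_\delta(a^\delta+b^\delta)$ and only afterwards perform the case split; the two routes are interchangeable and yield the same constant up to a harmless factor.
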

\begin{lemma}\label{lmA.4}
\text{\rm (\cite{Wei-Yan}, Lemma B.2)} For any constant $0<\delta<N-2$, there is a constant $C>0$, such that
\begin{align*}
\int_{\R^N}\frac{1}{|y-z|^{N-2}}\frac{1}{(1+|z|)^{2+\delta}}\leq \frac{C}{(1+|y|)^\delta}.
\end{align*}
\end{lemma}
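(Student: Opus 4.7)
My plan is to establish this classical convolution estimate by a region decomposition argument keyed off the natural length scale $R:=1+|y|$. When $|y|\le 1$, the right-hand side is comparable to a constant and the claim follows from the local integrability of the Newton kernel together with the decay of $(1+|z|)^{-(2+\delta)}$ at infinity; so the substantive case is $|y|\ge 1$, where $R\sim|y|$. Split $\R^N$ into
\[
A_1=\{|z-y|\le R/2\},\quad A_2=\{|z|\le R/2\},\quad A_3=\R^N\setminus(A_1\cup A_2).
\]
On $A_1$ the triangle inequality forces $|z|\ge R/2$, so $(1+|z|)^{-(2+\delta)}\le CR^{-(2+\delta)}$ and the remaining integral of the Newton kernel over a ball of radius $R/2$ is of order $R^2$, producing a contribution of order $CR^{-\delta}$.

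On $A_2$ one has $|y-z|\ge R/2$, hence $|y-z|^{-(N-2)}\le CR^{-(N-2)}$, and the truncated integral
\[
\int_{|z|\le R/2}\frac{dz}{(1+|z|)^{2+\delta}}
\]
is of order $R^{N-2-\delta}$ precisely because $\delta<N-2$ makes the integrand non-integrable at infinity with rate $|z|^{-(2+\delta)}$. The product again gives $CR^{-\delta}$. This is the one spot where the hypothesis $\delta<N-2$ is essential; if $\delta$ were larger, the integral would be $O(1)$ and a direct bound would suffice.

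On $A_3$ both $|z|\ge R/2$ and $|z-y|\ge R/2$ hold, and I would subdivide at $|z|=2|y|$. For $|z|\ge 2|y|$, $|y-z|\ge|z|/2$, so the integrand is controlled by $C(1+|z|)^{-(N+\delta)}$ and its tail integral is $CR^{-\delta}$. For $R/2\le|z|\le 2R$ with $|z-y|\ge R/2$, both the kernel and the weight are bounded by powers of $R$, and the region has volume $\sim R^N$, giving $CR^{N}\cdot R^{-(N-2)}\cdot R^{-(2+\delta)}=CR^{-\delta}$. Summing the three contributions yields the lemma.

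The main obstacle is essentially bookkeeping rather than analysis: the three regions are forced by the two length scales $|z-y|\sim 0$ and $|z|\sim 0$, and the only delicate point is ensuring that the truncated integral in $A_2$ is handled correctly, which is exactly where the assumption $0<\delta<N-2$ enters. No cancellation, Fourier analysis, or deeper harmonic-analytic tool is needed; the argument is entirely geometric.
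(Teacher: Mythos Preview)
Your argument is correct; this is the standard region-decomposition proof of this convolution estimate, and the only place the hypothesis $0<\delta<N-2$ is genuinely needed is exactly your $A_2$ estimate. The paper itself does not prove this lemma at all---it simply quotes it from \cite{Wei-Yan}, Lemma~B.2---so there is no in-paper argument to compare against; your self-contained proof is a strict addition.
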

\begin{lemma}\label{lmA.5}
\text{\rm (\cite{Wei-Yan}, Lemma B.3)} Suppose that $N\geq5$, then there is a small constant $\theta>0$, such that
\begin{align*}
\int_{\R^N}\frac{1}{|y-z|^{N-2}}W_{1,\lambda}^{\frac{4}{N-2}}\sum\limits_{j=1}^k\frac{1}{(1+\lambda|z-x_j|)^{\frac{N-2}{2}+\tau}}
\leq \sum\limits_{j=1}^k\frac{C}{1+\lambda|y-x_j|^{\frac{N-2}{2}+\tau+\theta}}.
\end{align*}
\end{lemma}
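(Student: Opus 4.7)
\medskip
\noindent\textbf{Proof proposal for Lemma \ref{lmA.5}.}
The plan is to reduce the weighted convolution to a sum of single-center integrals to which Lemmas \ref{lmA.3} and \ref{lmA.4} apply after rescaling. First I would bound the factor $W_{1,\lambda}^{4/(N-2)}$ pointwise by a sum of shifted bumps, namely
\begin{align*}
W_{1,\lambda}(z)^{\frac{4}{N-2}}\le C\sum_{i=1}^k\frac{\lambda^{2}}{(1+\lambda|z-x_i|)^{4}}.
\end{align*}
For $N\ge 6$ this is immediate from the subadditivity $(\sum a_i)^{p}\le \sum a_i^{p}$ with $p=4/(N-2)\in(0,1]$. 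For $N=5$ the exponent $4/3>1$, but because the centers are well separated on the $\lambda$-scale (since $\lambda|x_i-x_j|\gtrsim\lambda/k\to\infty$), the bump $w_{x_j,\lambda}$ dominates up to a bounded multiplicative factor on the Voronoi cell of $x_j$, and the inequality still holds with a universal $C$. With this in hand, the left-hand side of Lemma \ref{lmA.5} is dominated by a double sum
\begin{align*}
\sum_{i,j=1}^{k}\int_{\R^N}\frac{1}{|y-z|^{N-2}}\cdot\frac{\lambda^{2}}{(1+\lambda|z-x_i|)^{4}}\cdot\frac{1}{(1+\lambda|z-x_j|)^{\frac{N-2}{2}+\tau}}\,dz.
\end{align*}

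For the diagonal contribution $i=j$, I would merge the two bumps and rescale $w=\lambda(z-x_j)$, which converts the integrand into the form handled by Lemma \ref{lmA.4}. A direct computation gives a bound of order
$\lambda^{(N-2)/2}\bigl(1+\lambda|y-x_j|\bigr)^{-\delta_0}$ with $\delta_0=\min\{N-2,\,2+(N-2)/2+\tau\}$. Using $\tau=(N-4)/(N-2)$, one checks that $\delta_0>(N-2)/2+\tau$ for every $N\ge 5$, so there is a gap $\theta_0>0$ with $\delta_0\ge (N-2)/2+\tau+\theta_0$; this is the main diagonal improvement.

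For each off-diagonal pair $i\neq j$, the idea is to apply Lemma \ref{lmA.3} (with exponents $\alpha=4$, $\beta=(N-2)/2+\tau$, and a small parameter $0<\delta\le\min\{\alpha,\beta\}$) to collapse the product of bumps at $x_i$ and $x_j$ onto a single center, picking up the factor $|\lambda(x_i-x_j)|^{-\delta}$. After rescaling and Lemma \ref{lmA.4}, one obtains the bound
\begin{align*}
\frac{C\,\lambda^{(N-2)/2}}{|\lambda(x_i-x_j)|^{\delta}}\left(\frac{1}{(1+\lambda|y-x_i|)^{\frac{N-2}{2}+\tau+\theta_0}}+\frac{1}{(1+\lambda|y-x_j|)^{\frac{N-2}{2}+\tau+\theta_0}}\right),
\end{align*}
possibly after shrinking $\theta_0$. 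Summing in $i\neq j$ uses the polygon geometry $|x_i-x_j|\sim r_0|i-j|/k$ together with $\lambda\ge L_0 k^{(N-2)/(N-4)}$, giving
$\sum_{i\neq j}|\lambda(x_i-x_j)|^{-\delta}\le C(k/\lambda)^{\delta}\le C k^{-2\delta/(N-4)}=o(1)$, which is absorbed into the overall constant.

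The step that needs most care is calibrating $\delta$ in Lemma \ref{lmA.3} so that simultaneously (i) the residual decay exponent on the single-center bump remains strictly above $(N-2)/2+\tau$ by some $\theta>0$ after invoking Lemma \ref{lmA.4}, and (ii) the geometric cross-term sum $\sum_{i\neq j}|\lambda(x_i-x_j)|^{-\delta}$ converges and stays uniformly bounded; the borderline case is $N=5$ where both $\tau=1/3$ and the gap $N-2-(N-2)/2-\tau=3/2-1/3$ leave the least room to maneuver, and here one must take $\delta$ only slightly larger than $1$ to keep both constraints satisfied.
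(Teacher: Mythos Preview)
The paper does not supply its own proof of this lemma; it simply quotes Lemma~B.3 of \cite{Wei-Yan}. Your outline is precisely the strategy used there: dominate $W_{1,\lambda}^{4/(N-2)}$ by single-center weights, handle the diagonal pairs $i=j$ via the rescaling $w=\lambda(z-x_j)$ and Lemma~\ref{lmA.4}, collapse the off-diagonal products with Lemma~\ref{lmA.3} before applying Lemma~\ref{lmA.4}, and sum the resulting cross terms using $(k/\lambda)^\delta\to0$ for any $\delta\in(1,2)$.

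Two small corrections are in order. First, the prefactors $\lambda^{(N-2)/2}$ in your diagonal and off-diagonal bounds are spurious: after the substitution $w=\lambda(z-x_j)$ the factor $\lambda^{2}$ contributed by $W_{1,\lambda}^{4/(N-2)}$ is cancelled exactly by the Jacobian together with the rescaled Riesz kernel, so the final bound carries no power of $\lambda$, consistent with the stated right-hand side. Second, the global pointwise inequality $W_{1,\lambda}^{4/3}\le C\sum_i\lambda^{2}(1+\lambda|z-x_i|)^{-4}$ with $C$ independent of $k$ is not true at points $z$ far from all centers (there $\sum_iU_{x_i,\lambda}\sim k\lambda^{-3/2}$ while $\sum_iU_{x_i,\lambda}^{4/3}\sim k\lambda^{-2}$, so the ratio grows like $k^{1/3}$); the argument in \cite{Wei-Yan} instead works on each Voronoi region $\Omega_l$ and uses the splitting $W_{1,\lambda}^{4/(N-2)}\le C\bigl(U_{x_l,\lambda}^{4/(N-2)}+(\sum_{j\ne l}U_{x_j,\lambda})^{4/(N-2)}\bigr)$, treating the second piece through Lemma~\ref{lmA.3}. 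With these adjustments your plan goes through.
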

\begin{lemma}\label{lmA.6}
For any solution $(\varphi,\psi)$ to problem \eqref{eqnonlinear0} with $\|(\varphi,\psi)\|_{*}<+\infty$, there must hold further that
\begin{align*}
|\varphi|\leq\frac{1}{2}U,\,\,\,\,|\psi|\leq\frac{1}{2}V.
\end{align*}
\end{lemma}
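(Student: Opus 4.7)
The plan is to bootstrap the $\|\cdot\|_*$-estimate into the pointwise comparison with the approximate solutions by iterating the integral form of \eqref{eqnonlinear0}. From $\|(\varphi,\psi)\|_*<+\infty$ one has the initial pointwise estimate
\[
|\varphi(y)|\leq \|\varphi\|_*\,\lambda^{\frac{N-2}{2}}\sum_{j=1}^k\frac{1}{(1+\lambda|y-x_j|)^{\frac{N-2}{2}+\tau}},
\]
and the analogous bound for $\psi$. Since $\tau=(N-4)/(N-2)<(N-2)/2$ when $N\geq 5$, this initial weight decays strictly more slowly than $W_{1,\lambda}^*(y)\sim \lambda^{\frac{N-2}{2}}\sum_j(1+\lambda|y-x_j|)^{-(N-2)}$, so the bound cannot yet be compared to $W_{1,\lambda}$ directly; it is, however, enough to start the iteration.

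The implementation is to rewrite \eqref{eqnonlinear0} in integral form as in \eqref{eqinte1}–\eqref{eqinte2}, using the identity $W_{2,\lambda}=\kappa W_{1,\lambda}$ to collapse the apparently singular factor $W_{1,\lambda}^{(4-N)/(N-2)}W_{2,\lambda}^{N/(N-2)}$ to $\kappa^{N/(N-2)}W_{1,\lambda}^{4/(N-2)}$. Convolving the current pointwise bound against $|y-z|^{-(N-2)}$ with the linear kernels $W_{i,\lambda}^{4/(N-2)}$ and $W_{1,\lambda}^{2/(N-2)}W_{2,\lambda}^{2/(N-2)}$ and applying Lemmas \ref{lmA.3}, \ref{lmA.4}, \ref{lmA.5} improves the weight exponent from $\frac{N-2}{2}+\tau$ to $\frac{N-2}{2}+\tau+\theta$ for a fixed $\theta>0$; meanwhile, the error $R_k$ (Lemma \ref{lemR}), the nonlinear term $N_k(\varphi,\psi)$ (Lemma \ref{lemN}), and the Lagrange multiplier correction (Proposition \ref{proplinear}) are each pointwise dominated by $\lambda^{\frac{N+2}{2}}\sum_j(1+\lambda|y-x_j|)^{-\frac{N+2}{2}-\tau}$ times an $o(1)$ factor, and so contribute only lower-order terms. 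Iterating this convolution step a finite number of times raises the effective weight exponent up to $N-2$, producing
\[
|\varphi(y)|\leq C\,\|(\varphi,\psi)\|_*\,\lambda^{\frac{N-2}{2}}\sum_{j=1}^k\frac{1}{(1+\lambda|y-x_j|)^{N-2}} \leq C\,\|(\varphi,\psi)\|_*\,W_{1,\lambda}^*(y),
\]
and the parallel statement for $\psi$. On the support of $\xi$, $W_{1,\lambda}^*\sim W_{1,\lambda}$; off it, both sides are of order $\lambda^{-\frac{N-2}{2}}$ and can be absorbed into $\tfrac{1}{2}W_{1,\lambda}$ for $k$ large. Choosing $k$ large enough that $C\|(\varphi,\psi)\|_*\leq 1/2$ then yields the claim.

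The main obstacle is the apparent circularity with Lemma \ref{lemN}: the key estimate $\|N_k(\varphi,\psi)\|_{**}\leq C\|(\varphi,\psi)\|_*^{1+\delta}$ is obtained there via the Taylor inequality $(1+t)^{2/(N-2)}-1-\tfrac{2}{N-2}t=O(|t|^{1+\delta})$, which is only valid for $|t|<1/2$, i.e.\ precisely under the conclusion $|\varphi/W_{1,\lambda}|<1/2$ that Lemma \ref{lmA.6} is meant to provide. To break the loop, the bootstrap must be seeded by a crude a priori bound obtained from $\|(\varphi,\psi)\|_*<+\infty$ alone: namely $|\varphi(y)|\leq C'\|\varphi\|_*\lambda^{(N-2)/2}$ in a fixed neighbourhood of each $x_j$, which gives $|\varphi/W_{1,x_j}|=O(1)$ (bounded, not yet small) in that neighbourhood. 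On this preliminary regime the nonlinearity is controlled by the weaker Taylor remainder $(1+t)^{2/(N-2)}-1-\tfrac{2}{N-2}t=O(t^2)$ for $|t|$ bounded, so the iteration above can be run rigorously. The quantitative factor $\tfrac{1}{2}$ then arises from the smallness of $\|(\varphi,\psi)\|_*$, which goes to zero as $k\to\infty$ for the fixed-point solutions of interest.
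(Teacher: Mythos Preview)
The paper itself omits the proof entirely, deferring to the preprint \cite{Guo-Wang-Wang}, so there is no in-paper argument to compare against directly. Your bootstrap-via-integral-equation strategy (iterating Lemma~\ref{lmA.5} to push the decay exponent from $\tfrac{N-2}{2}+\tau$ up to $N-2$) is the standard route for such pointwise comparison lemmas and is almost certainly what the reference does.

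Two points in your write-up need correction. First, your resolution of the circularity is not quite right. Near each $x_j$ the crude seed bound actually gives $|\varphi/W_{1,x_j}|\leq C\|\varphi\|_*$, which is $O(\|\varphi\|_*)$, not merely $O(1)$. If $\|(\varphi,\psi)\|_*$ is only assumed \emph{finite} as in the lemma's literal hypothesis, this ratio can be large; the remainder $(1+t)^{2/(N-2)}-1-\tfrac{2}{N-2}t$ is not $O(t^2)$ uniformly once $t$ approaches $-1$, and indeed $(W_{1,\lambda}+\varphi)^{2/(N-2)}$ need not even be defined. The lemma as stated in the paper is thus too strong; in its actual use (inside the contraction set $E$ of Proposition~\ref{propnonlinear}) one has $\|(\varphi,\psi)\|_*\leq\lambda^{-1}$, and under that implicit smallness your seed bound already yields $|\varphi/W_{1,\lambda}|=o(1)$ near the bubbles, so no separate ``crude regime'' step is required. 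You should flag that the hypothesis ought to read ``$\|(\varphi,\psi)\|_*$ sufficiently small'' rather than merely finite.

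Second, the sentence ``off it, both sides are of order $\lambda^{-(N-2)/2}$ and can be absorbed into $\tfrac12 W_{1,\lambda}$'' is wrong: $W_{1,\lambda}=\xi W_{1,\lambda}^*$ vanishes identically off the support of $\xi$, so nothing can be absorbed there. The conclusion must be read either as $|\varphi|\leq\tfrac12 W_{1,\lambda}^*$ globally, or as $|\varphi|\leq\tfrac12 W_{1,\lambda}$ on $\{\xi>0\}$. Either reading suffices for the only application (the estimate of the term $II$ in Lemma~\ref{lemN}), since that term carries a prefactor $W_{1,\lambda}^{2/(N-2)}W_{2,\lambda}^{N/(N-2)}$ that already vanishes off the support of $\xi$.
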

\begin{proof}
Since the proof is similar as Lemma of \cite{Guo-Wang-Wang}, we omit it.
\end{proof}
Acknowledgement: The authors would like to thank Professor Chunhua Wang for the helpful  discussion with her. This paper is supported by NSFC(No. 12071169 and No.12471106).

\bibliography{reference}

\end{document}